\newtheorem{theorem}{Theorem}[section]
\newtheorem{corollary}[theorem]{Corollary}
\newtheorem{lemma}[theorem]{Lemma}
\newtheorem{remark}[theorem]{Remark}
\newtheorem{definition}[theorem]{Definition}
\DeclareMathAlphabet\EuScript{U}{eus}{m}{n}
\SetMathAlphabet\EuScript{bold}{U}{eus}{b}{n}
\begin{document}
	
	\title{Hilbert space embeddings of independence  tests of several variables with radial basis functions }

\begin{abstract}
In this paper, we characterize several classes of continuous radial basis functions that can be employed to determine whether a interaction of a probability is zero or not. These functions encompass standard independence tests but also the Lancaster/Streitberg interactions, and are multivariate extensions of Bernstein functions. Addressing a gap in these two probability contexts of interactions, we introduce an indexed measure of independence that generalizes the Lancaster interaction. We present several examples of these functions derived from high-order completely monotone functions.   
\end{abstract}
\keywords{Independence tests; Lancaster and Streitberg interactions;  Radial basis  functions; Bernstein functions of several variables}

 \subjclass[2020]{ 42A82; 43A35; 44A10; 46E22; 62H15}
 	
 \author{Jean Carlo Guella}
 \address{	Universidade Estadual de Mato Grosso do Sul, Nova Andradina, Brazil}
 	 \email{jean.guella@uems.br} 	
\maketitle
	
	\tableofcontents

 \section{Introduction}

 A crucial aspect in probability and statistics, which differentiates it from measure theory,  is the concept of independence. The most classical results in the field usually requires that a probability on a Cartesian product is the product of its marginals (or in other words, that the  random variables are independent).

 However, when analyzing multivariate data we must test weather or not independence is a valid assumption. There are several known tests in the literature, but the ones connected to this text are the  Hilbert Schmidt Independence Criterion (HSIC)\cite{ Albert2022, Gretton2005, Gretton2008, petersjonasjointindp, sejdinovic2013equivalence, Poczos2012, Tjoestheim2022, Zhu2020},  Distance Covariance   \cite{ Feuerverger1993, Bakirov2006, Szekely2009,  Szekely2014, Szekely2007, Dueck2014, Janson2021, MartinezGomez2014,  Szekely2013, Yao2018} and   its generalization to several variables known as   Distance Multivariance \cite{Boettcher2018, Boettcher2019, Chakraborty2019}.

 One of our aims in this article is to provide a characterization of all radial kernels for which we can use as an independence test in all Euclidean spaces.  Precisely,   we want to characterize the   continuous  functions  $g:[0, \infty)^{n} \to \mathbb{R} $ such that for any $d \in \mathbb{N}^{n}$ is  able to discern if a discrete probability  $P $  in $\prod_{i=1}^{n}\mathbb{R}^{d_{i}}$ is equal to $\bigtimes_{i=1}^{n}P_{i}$ using a double sum (but is convenient to use an integration terminology to simplify  the expressions)
 \begin{equation}\label{objective}
 	\int_{\prod_{i=1}^{n}\mathbb{R}^{d_{i}}} \int_{\prod_{i=1}^{n}\mathbb{R}^{d_{i}}} g(\|x_{1}  - y_{1} \|^{2}, \ldots , \|x_{n}  - y_{n} \|^{2} )d[P - \bigtimes_{i=1}^{n}P_{i}](x )d[P - \bigtimes_{i=1}^{n}P_{i}]( y )>0.
 \end{equation}

 However, the set of signed measures $\{P - \bigtimes_{i=1}^{n}P_{i}, \quad P \text{ is a discrete probability}\}$ is difficult to deal as it is not a vector space. As done in \cite{guella2023} for the case $n=2$, if we restrict the functions that satisfies Equation \ref{objective} by demanding that it can differentiate if or not $P-Q=0$, provided that $P_{i}=Q_{i}$ for any $1\leq i \leq n$, we are essentially analyzing the problem on the vector space $\mathcal{M}_{2}(\prod_{i=1}^{n}\mathbb{R}^{d_{i}})$ (see Remark \ref{hanhjordanequivalence}).
 
 The concept that will allow us to understand this problem is of a  positive definite independent function of order $2$ (PDI$_{2,n}^{\infty}$), which is characterized in 	Theorem \ref{berns2sevndimpart2}  and on   Corollary \ref{SPDPDIK}  we provide a characterization for  when the inequality in Equation \ref{objective} holds ($k=2$). Interestingly, as the Corollary \ref{SPDPDIK} states,  the additional restriction that we made on Equation \ref{objective}, which simplified its analysis do not restrict the initial set of functions we were aiming at.   
 
 The interest in the functions that satisfies \ref{objective} is to obtain an all purpose independence test  on Euclidean spaces (that is, with no restrictions in the dimension).

 However, in some cases the multivariate data might not be independent, but the probability might interact with its marginals on a different way  that is relevant for the analysis. As far as we know there is no formal definition of a interaction for a probability, but the way we use in this text is if it satisfies some algebraic relation between the probability and its marginals.

 Two types of interactions have gained attention in the literature of kernel methods recently: the Streitberg \cite{streitberg1990lancaster} and the Lancaster interaction \cite{lancaster1969chi} (they are defined in Section \ref{Terminology},   and they are part of the  broader context of partition lattices \cite{NEURIPS2023_74f11936}). A natural question is then to obtain a characterization of which continuous  functions  $g:[0, \infty)^{n} \to \mathbb{R} $ satisfies that  for  any $d \in \mathbb{N}^{n}$ is  able to discern if a discrete probability  $P $  in $\prod_{i=1}^{n}\mathbb{R}^{d_{i}}$ satisfies that   $\Sigma[P]=0$ (or $\Lambda[P]=0$) if and only if  
 \begin{equation}\label{objective2}
 	\int_{\prod_{i=1}^{n}\mathbb{R}^{d_{i}}} \int_{\prod_{i=1}^{n}\mathbb{R}^{d_{i}}} g(\|x_{1}  - y_{1} \|^{2}, \ldots , \|x_{n}  - y_{n} \|^{2} )d\Sigma[P](x )d\Sigma[P](y)=0.
 \end{equation} 
 
 Similar to the independence tests above, initially, such task is difficult, but if we additionally impose that   the functions that satisfies Equation \ref{objective2} also can  differentiate if or not $P-Q=0$, provided that $P_{F}=Q_{F}$ for any $F \subset \{1, \ldots, n\}$, $|F|\leq n-1$, we are essentially analyzing the problem on the vector space $\mathcal{M}_{n}(\prod_{i=1}^{n}\mathbb{R}^{d_{i}})$ (see Remark \ref{hanhjordanequivalence}, and we prove in Lemma \ref{exm02xn} that $\Sigma[P]$ and $\Lambda[P]$ are elements of $\mathcal{M}_{n}(\prod_{i=1}^{n}\mathbb{R}^{d_{i}})$ for any $P$). 
 
 The concept that will allow us to understand this extended  problem is of a  positive definite independent function of order $n$ (PDI$_{n}^{\infty}$), which is characterized in Theorem \ref{basicradialndim} and on Corollary \ref{SPDPDIK} we provide a characterization for  when inequality \ref{objective2} holds ($k=n$). As Corollary \ref{SPDPDIK} states,  the additional restriction that we made on Equation \ref{objective2}, which simplified  its analysis do not restrict the initial set of functions we were aiming at. We emphasize that we obtained that the tests that are able to discern if or not $\Sigma[P]=0$ are the same as the ones of  $\Lambda[P]=0$, even though those two equalities have (possible) different conclusions.

 However,  by Lemma \ref{exm01xn} and   Lemma \ref{exm02xn}, we see that there is a gap between independence tests and the Lancaster/Streitberg interactions. We fill such gap in two ways: for $2< k< n$ we define in Section \ref{Terminology}    intermediate vector spaces $\mathcal{M}_{k}$, which lies   between $\mathcal{M}_{2}$  and $\mathcal{M}_{n}$,  and based on them we  define a generalization of the Lancaster interaction with an index $k$, where when $k=2$ we have an independence test and when $k=n$ we have the standard  Lancaster interaction.

 Those classes of functions must be seen as multivariate generalizations of the  Schoenberg results about  the positive definite and conditionally negative definite radial kernels on all Euclidean spaces   in \cite{schoenbradial}, which are respectively $k=0$ and $k=1$. The proof we present is not constructive, instead we rely on the fact that the representation for the conditionally negative definite case is unique (Theorem \ref{reprcondneg}), and some techniques based in \cite{Guella2020} concerning measure valued positive definite radial  kernels on Euclidean spaces.

 We conclude the text in Section \ref{sumsbased}, where we present    a  characterization for the   continuous functions $\psi: [0, \infty)  \to \mathbb{R}$, for which the kernel 
 $$
 \psi( \|x_{1}-y_{1}\|^{2}+\ldots +\|x_{n} - y_{n}\|^{2} ), \quad  x_{i}, y_{i} \in \mathbb{R}^{d_{i}}
 $$
 is positive definite independent of order $k$  for every  $d \in \mathbb{N}^{n}$, by relating  those functions with  completely monotone functions of order $k$, which there are plenty of examples in the literature such as  $\psi(t)=(-1)^{k} t^{a}$, $a \in (k-1,k)$.

 \section{Definitions}\label{Definitions}

 In this Section, we make a review of the most important results and definitions that will be required for the development of the text. Some of those results were presented and developed  in  \cite{guella2023}, where it is presented the theory of positive definite independent kernels in two variables, but to maintain a self contained text we  reintroduce them. Proof of these results  can be found in the references mentioned in the text.

 \begin{center}
 	Positive definite and conditionally negative definite kernels
 \end{center}

 A symmetric kernel $K: X \times X \to \mathbb{R}$ is called Positive Definite (PD) if for every finite quantity of distinct points $x_{1}, \ldots, x_{n} \in X$ and scalars $c_{1}, \ldots, c_{n} \in \mathbb{R}$, we have that
 $$
 \sum_{i, j =1}^{n}c_{i}c_{j} K(x_{i}, x_{j}) \geq 0.
 $$

 A symmetric kernel $\gamma: X \times X \to \mathbb{R}$ is called Conditionally Negative Definite (CND) if for every finite quantity of distinct points $x_{1}, \ldots, x_{n} \in X$ and scalars $c_{1}, \ldots, c_{n} \in \mathbb{R}$, with the restriction that $\sum_{i=1}^{n}c_{i}=0$, we have that
 $$
 \sum_{i, j =1}^{n}c_{i}c_{j} \gamma(x_{i}, x_{j}) \leq 0.
 $$

 The concept of CND kernels is intrinsically related to PD kernels, as a  symmetric kernel $\gamma: X\times X\to \mathbb{R}$ is CND if and only if  for any (or equivalently, for every) $w \in X$ the kernel
 \begin{equation}\label{Kgamma}
 	K_{\gamma}^{w}(x,y):=\gamma(x,w) + \gamma(w, y) - \gamma(x,y) - \gamma(w,w)
 \end{equation}
 is positive definite. With this result is possible to explain the relation between CND kernels and Hilbert spaces as  if   $\gamma: X \times X \to \mathbb{R}$ is CND it can  be written as 
 \begin{equation}\label{condequa}
 	\gamma(x,y)= \|h(x)- h(y)\|_{\mathcal{H}}^{2} + \gamma(x,x)/2 + \gamma(y,y)/2
 \end{equation}
 where $\mathcal{H}$ is a real Hilbert space and $h: X \to \mathcal{H}$. Another famous relation is that a  symmetric kernel $\gamma: X\times X\to \mathbb{R}$ is CND if and only if  for every $r>0$  the kernel
 \begin{equation}\label{schoenmetriccond}
 	(x,y) \in X\times X \to  e^{-r\gamma(x,y)}
 \end{equation}
 is PD. 
 
 From Equation \ref{condequa} we obtain that an CND kernel $\gamma$ is continuous if and only if is continuous in the diagonal, that is, $\gamma(z^{1}_{n},z_{n}^{2}) \to \gamma(z,z)$ whenever $z_{n}^{1}$ and $z_{n}^{2}$ converges to $z$.
 
 Those  classical results about CND kernels  can be found in Chapter $3$ at \cite{Berg1984}.

 The characterization of  the continuous CND radial kernels in all Euclidean spaces was proved in \cite{schoenbradial}, and is the following:  
 \begin{theorem}\label{reprcondneg} Let $\psi :[0, \infty)\to \mathbb{R}$ be a continuous function. The following conditions are equivalent
 	\begin{enumerate}
 		\item[$(i)$] The kernel
 		$$
 		(x,y) \in \mathbb{R}^{d}\times \mathbb{R}^{d} \to \psi(\|x-y\|^{2}) \in \mathbb{R}
 		$$
 		is CND for every $d \in \mathbb{N}$. 
 		\item[$(ii)$] The function $\psi$ can be represented as
 		$$
 		\psi(t)=\psi(0)+ \int_{[0,\infty)}(1-e^{-rt})\frac{1+r}{r}d\eta(r),
 		$$  
 		for all $t \geq 0$, where  $\eta$ is a nonnegative  measure on $\mathfrak{M}([0,\infty))$. The  representation is  unique.
 		\item[$(iii)$] The function $\psi \in C^{\infty}(0,\infty))$ and $\psi^{(1)}$ is completely monotone, that is, $(-1)^{n}\psi^{(n+1)}(t) \geq 0$, for every $n\in \mathbb{Z}_{+}$ and $t>0$.
 	\end{enumerate}
 \end{theorem}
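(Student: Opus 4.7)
The plan is to prove the circle of implications (ii) $\Rightarrow$ (i), (i) $\Rightarrow$ (ii), and (ii) $\Leftrightarrow$ (iii). The easy directions are (ii) $\Rightarrow$ (i) and the Bernstein-type equivalence (ii) $\Leftrightarrow$ (iii); the real content is (i) $\Rightarrow$ (ii), which uses Schoenberg's characterization of PD radial kernels together with an exponentiation trick and a weak limit.

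For (ii) $\Rightarrow$ (i), I would first note that for each fixed $r\geq 0$ the Gaussian kernel $(x,y)\mapsto e^{-r\|x-y\|^{2}}$ is PD on every $\mathbb{R}^{d}$, so, writing $t=\|x-y\|^{2}$, the kernel $(x,y)\mapsto 1-e^{-r\|x-y\|^{2}}$ is CND on every $\mathbb{R}^{d}$ (it vanishes on the diagonal, and $-(1-e^{-rt})$ differs from the PD Gaussian by a constant). The integrand $(1-e^{-rt})(1+r)/r$ is bounded by $\min(t(1+r),1+1/r)$, so Fubini applies and CND is preserved under integration against a nonnegative measure; adding the constant $\psi(0)$ (which is trivially CND) concludes (i).

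For (i) $\Rightarrow$ (ii), I would apply Equation \ref{schoenmetriccond} to conclude that for every $r>0$ and every $d$, the radial kernel $(x,y)\mapsto e^{-r\psi(\|x-y\|^{2})}$ is PD on $\mathbb{R}^{d}$. Invoking Schoenberg's radial PD theorem (the $k=0$ case the authors mention), there exists a finite nonnegative measure $\mu_{r}$ on $[0,\infty)$ with $\mu_{r}([0,\infty))=e^{-r\psi(0)}$ such that
\begin{equation*}
e^{-r\psi(t)}=\int_{[0,\infty)}e^{-st}\,d\mu_{r}(s),\qquad t\geq 0.
\end{equation*}
Subtracting the $t=0$ evaluation and dividing by $r$ gives
\begin{equation*}
\frac{e^{-r\psi(0)}-e^{-r\psi(t)}}{r}=\int_{[0,\infty)}\frac{1-e^{-st}}{r}\,d\mu_{r}(s),
\end{equation*}
and the left side converges pointwise to $\psi(t)-\psi(0)$ as $r\to 0^{+}$. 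The main obstacle is to upgrade this to vague convergence of the reweighted measures $d\eta_{r}(s):=\frac{s}{r(1+s)}\,d\mu_{r}(s)$ to a finite nonnegative measure $\eta$ on $[0,\infty)$. I expect to establish tightness of $\{\eta_{r}\}_{r\in(0,1]}$ by testing against the bounded continuous functions $s\mapsto\frac{(1-e^{-st})(1+s)}{s}$ for two well-chosen values of $t$ (one small, one large) to control the mass near $0$ and near $\infty$; a diagonal/Helly extraction then yields a subsequential limit $\eta$ which, substituted back, produces the claimed representation. Uniqueness of $\eta$ then upgrades subsequential convergence to full convergence, and it follows from the injectivity of the map $\eta\mapsto\int(1-e^{-st})(1+s)/s\,d\eta(s)$, which in turn reduces, after differentiation, to the injectivity of the Laplace transform on finite measures.

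For (ii) $\Leftrightarrow$ (iii), starting from (ii) I would differentiate under the integral (permissible since $r e^{-rt}(1+r)/r=(1+r)e^{-rt}$ is bounded by $(1+r)e^{-rt_{0}}$ on $[t_{0},\infty)$ for every $t_{0}>0$) to obtain $\psi^{(1)}(t)=\int_{[0,\infty)}e^{-rt}(1+r)\,d\eta(r)$, which is the Laplace transform of the finite measure $(1+r)d\eta(r)$ and hence completely monotone by Bernstein's theorem; iterated differentiation also shows $\psi\in C^{\infty}(0,\infty)$. Conversely, from (iii), Bernstein produces a nonnegative measure $\nu$ on $[0,\infty)$ with $\psi^{(1)}(t)=\int_{[0,\infty)}e^{-rt}d\nu(r)$, and integrating from $0$ to $t$ and applying Fubini gives $\psi(t)=\psi(0)+\int_{(0,\infty)}\frac{1-e^{-rt}}{r}d\nu(r)+t\nu(\{0\})$; absorbing the $t$ term into the integral via the convention $\lim_{r\to 0}(1-e^{-rt})/r=t$ and setting $d\eta(r):=\frac{r}{1+r}d\nu(r)$ yields the desired representation, with $\eta$ nonnegative. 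Uniqueness of $\eta$ follows because $\eta\mapsto(1+r)d\eta(r)/r\cdot r = (1+r)d\eta(r)$ determines $\psi^{(1)}$ via Laplace inversion, and the latter is injective on finite measures.
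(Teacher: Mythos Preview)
The paper does not actually prove Theorem~\ref{reprcondneg}: it is stated in Section~\ref{Definitions} as a classical background result and attributed to Schoenberg \cite{schoenbradial}, with only the auxiliary inequalities \eqref{bern1ineq}--\eqref{bern1ineq2} recorded for later use. So there is no ``paper's own proof'' to compare against.

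That said, your sketch follows the standard route (essentially Schoenberg's original argument, as presented for instance in \cite{Berg1984} or \cite{Schilling2012}), and the overall architecture is correct. Two technical points deserve care in the (i)~$\Rightarrow$~(ii) step. First, to bound the total mass of your measures $\eta_{r}$ uniformly in $r$, the cleanest way is exactly the inequality \eqref{bern1ineq} of the paper: testing at $t=1$ gives $\eta_{r}([0,\infty))\le (e^{-r\psi(0)}-e^{-r\psi(1)})/r\to \psi(1)-\psi(0)$, so the family is bounded. Second, the passage to the limit under the integral is slightly delicate because your test functions $s\mapsto (1-e^{-st})(1+s)/s$ are bounded and continuous but do \emph{not} vanish at infinity, so vague convergence alone does not immediately justify interchanging limit and integral. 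The usual fix is to compactify to $[0,\infty]$, extract a weak limit there, and then use a second test value (say small $t$) together with \eqref{bern1ineq2} to show that no mass escapes to $\infty$; your remark about ``two well-chosen values of $t$'' is pointing in the right direction but would benefit from being made explicit. The remaining implications (ii)~$\Rightarrow$~(i) and (ii)~$\Leftrightarrow$~(iii) are handled correctly.
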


 A continuous function $\psi:[0, \infty)\to \mathbb{R}$ that satisfies the relation $(iii)$ in  Theorem \ref{reprcondneg}  is called a Bernstein function (we do not need to assume that Bernstein functions are nonnegative), and the same theorem provides a representation for it. For more information on Bernstein functions see \cite{Schilling2012}. The value of the function $(1-e^{-rt})(1+r)/r$ at  $r=0$ is defined as the limit of $r \to 0$, that is, its value is $t$. Usually, the integral on the set $[0, \infty)$ is separated in the integral at $\{0\}$ plus the integral on the set $(0, \infty)$, we do not present it in this way as   the notation and terminology of the proofs in Section \ref{Bernsteinfunctionsofordern} are considerably simplified by using this simple modification. 
 
 The following two simple inequalities are necessary for the proof of Theorem \ref{reprcondneg}  and are useful for the development of this text 
 \begin{equation}\label{bern1ineq}
 	1 \leq (1-e^{-s})\frac{1+s}{s} \leq 2, \quad s \geq 0,
 \end{equation}
 \begin{equation}\label{bern1ineq2}
 	\min (1,t) \leq (1-e^{-rt})\frac{1+r}{r} \leq 2 \max (1,t), \quad r,t \geq 0.
 \end{equation}

 \begin{center}
 	Positive definite independent kernels
 \end{center}
 
 In  \cite{guella2023} a new type of kernel was defined, whose idea is to  generalize the existing  kernel methods approach for independence tests in two variables.

 \begin{definition}\label{PDI}Let $X$ and $Y$ be non empty sets. We say that a symmetric kernel $\mathfrak{I}: (X\times Y) \times (X\times Y) \to \mathbb{R}$ is a   Positive Definite Independent Kernel (PDI)    if for every finite quantity of distinct points $x_{1}, \ldots, x_{n} \in X$, $y_{1}, \ldots , y_{m} \in Y$  and real scalars $c_{i, k}$, with the restrictions
 	$$
 	\sum_{i=1}^{n}c_{i,k}=0 , \quad \sum_{l=1}^{m}c_{j, l}=0, 
 	$$
 	for every $1\leq k \leq m$, $1\leq j \leq n$, it satisfies
 	$$
 	\sum_{i,j=1}^{n}\sum_{k,l=1}^{m}c_{i, k}c_{j, l}\mathfrak{I}((x_{i},y_{k}),(x_{j}, y_{l})) \geq 0.
 	$$
 \end{definition}

 This definition is inspired by the fact that for arbitrary points $x_{1}, \ldots, x_{n} \in X$, $y_{1}, \ldots , y_{m} \in Y$  and real scalars $c_{i, k}$ that satisfies the restrictions in Definition \ref{PDI}, there exists   discrete probabilities $P, P^{\prime}$ in $X\times Y$ with the same marginals and an $M \geq 0$ (obtained from its Hahn-Jordan decomposition) for which 
 $$
 \left ( \sum_{i=1}^{n} \sum_{k=1}^{m}c_{i,k}\delta_{(x_{i}, y_{k})} \right )=M[P- P^{\prime}].
 $$
 Conversely, if $P, P^{\prime}$ are discrete probabilities in $X\times Y$ with the same marginals,  then there exists points $x_{1}, \ldots, x_{n} \in X$, $y_{1}, \ldots , y_{m} \in Y$  and real scalars $c_{i, k}$ that satisfies the restrictions in Definition \ref{PDI} for which the previous equality holds.

 \begin{theorem}\label{basicradial}
 	Let $g: [0, \infty)^{2} \to \mathbb{R}$ be a  continuous function  that is zero at the boundary. The following conditions are equivalent:
 	\begin{enumerate}
 		\item [$(i)$] The kernel
 		$$
 		g( \|x_{1}-y_{1}\|^{2}, \|x_{2}-y_{2}\|^{2} ), \quad  x_{1},y_{1} \in  \mathbb{R}^{d}, \quad x_{2}, y_{2} \in \mathbb{R}^{d^{\prime}}
 		$$ is PDI  for every  $d, d^{\prime }\in \mathbb{N}$.
 		\item [$(ii)$] The kernel can be represented as
 		\begin{align*}
 			g( \|x_{1}-y_{1}\|^{2}, \|x_{2}-y_{2}\|^{2})=\int_{[0,\infty)^{2}}\prod_{i=1}^{2} \frac{(1-e^{-r_{i}\|x_{i}-y_{i}\|^{2}})}{r_{i}}(1 +r_{i})d\eta(r_{1},r_{2}) 
 		\end{align*}
 		where  the measure  $\eta\in \mathfrak{M}([0,\infty)^{2})$ is nonnegative.
 		\item [$(iii)$] The function $g$ is a Bernstein function   of two variables.
 	\end{enumerate}
 \end{theorem}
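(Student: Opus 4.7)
The equivalence (ii)$\Leftrightarrow$(iii) I would take as the characterization of Bernstein functions of two variables to be developed in Section \ref{Bernsteinfunctionsofordern}, an analogue of the Levy--Khintchine representation in two dimensions. For (ii)$\Rightarrow$(i), it suffices to check that for each fixed $(r_1,r_2)\in[0,\infty)^2$ the tensor kernel $\prod_{i=1}^{2}(1-e^{-r_i\|x_i-y_i\|^2})$ is PDI on $\mathbb{R}^{d}\times\mathbb{R}^{d'}$: since each factor is CND on its Euclidean space, applying the identity for $K_\gamma^{w}$ in Equation \ref{Kgamma} to each factor and invoking the two zero-sum constraints in Definition \ref{PDI}, all "cross" terms annihilate and the PDI sum reduces to one involving the product of two positive definite kernels $K_{\gamma_1}^{w_1}\otimes K_{\gamma_2}^{w_2}$, which is PD. Integrating against the nonneg $\eta$, with Fubini justified by the bounds \ref{bern1ineq}--\ref{bern1ineq2}, closes this direction.

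For the substantive implication (i)$\Rightarrow$(ii), my first step is to establish a marginal Bernstein representation. Picking $x_1,y_1$ with $\|x_1-y_1\|^2=t_1$ and testing PDI against the coefficients $c_{1,k}=d_k$, $c_{2,k}=-d_k$ for arbitrary $\sum_k d_k=0$, the boundary condition $g(0,\cdot)\equiv 0$ collapses the inequality to $\sum_{k,l}d_k d_l\,g(t_1,\|z_k-z_l\|^2)\leq 0$, showing that $g(t_1,\cdot)$ is CND radial on every $\mathbb{R}^{d'}$. Theorem \ref{reprcondneg} combined with $g(t_1,0)=0$ then produces a unique nonneg $\mu_{t_1}\in\mathfrak{M}([0,\infty))$ with
\[
g(t_1,t_2)=\int_{[0,\infty)}(1-e^{-r_2 t_2})\frac{1+r_2}{r_2}\,d\mu_{t_1}(r_2).
\]
Next, testing PDI against tensor-product coefficients $c_{i,k}=a_i b_k$ under $\sum_i a_i=\sum_k b_k=0$ and freezing the $(b_k,y^{(k)})$ datum, the inequality asserts that $t_1\mapsto -\sum_{k,l}b_k b_l\,g(t_1,\|y^{(k)}-y^{(l)}\|^2)$ is CND radial on every $\mathbb{R}^{d}$ and vanishes at zero. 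Substituting the above representation and using $\sum_k b_k=0$ to eliminate the constant $1$ inside each $(1-e^{-r_2\|y^{(k)}-y^{(l)}\|^2})$, this rewrites as the assertion that $t_1\mapsto\int h_b\,d\mu_{t_1}$ is Bernstein for every datum, where $h_b(r_2):=\bigl[\sum_{k,l}b_k b_l\,e^{-r_2\|y^{(k)}-y^{(l)}\|^2}\bigr](1+r_2)/r_2\geq 0$; Theorem \ref{reprcondneg} then furnishes a unique nonneg Levy measure $\tau_{h_b}$ on $[0,\infty)$, depending linearly and positively on $h_b$.

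The main obstacle is the final step: assembling the family $\{\tau_{h_b}\}$ into a single nonneg Borel measure $\eta$ on $[0,\infty)^2$ such that $\mu_{t_1}$ is the $r_2$-marginal of $(1-e^{-r_1 t_1})(1+r_1)/r_1\,d\eta(r_1,r_2)$. I would argue that the cone generated by the test functions $h_b$ (as $(b_k,y^{(k)})$ varies over all finite zero-mass configurations in all Euclidean spaces) is dense in the cone of nonneg continuous compactly supported functions on $(0,\infty)$ --- using that Gaussian-type kernels $r_2\mapsto e^{-r_2 s}$ separate points and that the estimates \ref{bern1ineq}--\ref{bern1ineq2} provide equi-integrability of $(1+r_2)/r_2$ against $\mu_{t_1}$ --- so that $\varphi\mapsto\tau_\varphi$ extends to a positive linear operator into $\mathfrak{M}([0,\infty))$ satisfying $\int\varphi\,d\mu_{t_1}=\int(1-e^{-r_1 t_1})(1+r_1)/r_1\,d\tau_\varphi(r_1)$. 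A Riesz-type kernel theorem, in the spirit of the measure-valued positive definite radial kernel machinery from \cite{Guella2020}, then disintegrates this operator into a joint $\eta$ on $[0,\infty)^2$; Fubini delivers the product representation of (ii). The delicate point is propagating the uniqueness in Theorem \ref{reprcondneg} consistently across $t_1$-values, so that the compatible family $\{\tau_\varphi\}$ genuinely arises from a single joint measure rather than an uncountable collection of one-dimensional marginals.
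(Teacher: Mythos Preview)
Your reductions (ii)$\Leftrightarrow$(iii) and (ii)$\Rightarrow$(i), and your first move in (i)$\Rightarrow$(ii) (fixing $t_1$ and extracting $\mu_{t_1}$ via Theorem~\ref{reprcondneg}), are all in line with the paper's treatment --- the $n=2$ case of the proof of Theorem~\ref{basicradialndim} runs exactly along these lines.

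The gap is in your assembly step. Your density claim is false as stated: the test functions $h_b(r_2)=\bigl[\sum_{k,l}b_kb_l\,e^{-r_2\|y^{(k)}-y^{(l)}\|^2}\bigr](1+r_2)/r_2$ satisfy $h_b(r_2)\to\sum_k b_k^2>0$ as $r_2\to\infty$ (for distinct $y^{(k)}$), so the cone they generate cannot approximate anything in $C_c^+((0,\infty))$. The paper sidesteps density altogether by swapping which variable is frozen first: represent $g(\cdot,s)$ in the \emph{first} variable to get $\eta_s\in\mathfrak{M}([0,\infty))$ indexed by the \emph{second} variable $s$. Then the same PDI test you ran shows that $t_1\mapsto -\sum_{\theta,\vartheta}c_\theta c_\vartheta\,g(t_1,\|u_\theta-u_\vartheta\|^2)$ is Bernstein in $t_1$, and now --- since both this function and each $g(\cdot,s_{\theta\vartheta})$ are represented in the \emph{same} variable $t_1$ --- the uniqueness clause of Theorem~\ref{reprcondneg} forces the representing measure to be $-\sum_{\theta,\vartheta}c_\theta c_\vartheta\,\eta_{s_{\theta\vartheta}}$, which is therefore nonnegative. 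Evaluating on an arbitrary Borel set $A$ gives directly that $s\mapsto\eta_s(A)$ is CND radial on every $\mathbb{R}^{d'}$, whence Theorem~\ref{reprcondneg} produces $\eta_A\in\mathfrak{M}([0,\infty))$ for each $A$; one checks $(A,B)\mapsto\eta_A(B)$ is a nonnegative bimeasure and invokes the extension theorem (Theorem~1.10 in \cite{Berg1984}) to obtain the joint $\eta$. No approximation, no Riesz-type disintegration --- just uniqueness and bimeasures.
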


 \section{Vector spaces of measures and probability interactions}\label{Terminology}	
 
 Let  $X_{i}$, $1\leq i \leq n$, be non empty sets and consider the $n-$Cartesian product $\prod_{i=1}^{n}X_{i}$, which we  denote as $\mathds{X}_{n}$.

 For  $m, n \in \mathbb{N}$ we define the set  $ \mathbb{N}_{m}^{n}:=\{1,\ldots , m\}^{n}$, which has $m^{n}$ elements, similarly we define $\mathbb{N}_{m}^{0,n}:=\{0, 1, \ldots, m\}^{n}$  which has $(m+1)^{n}$ elements.  If $x_{i}^{1}, \ldots, x_{i}^{m} \in X_{i}$, $1\leq i \leq n$, we define for $\alpha=(\alpha_{1}, \ldots, \alpha_{n}) \in \mathbb{N}_{m}^{n}$ (or $\mathbb{N}_{m}^{0,n}$) the element $x_{\alpha}:=(x_{1}^{\alpha_{1}}, \ldots, x_{n}^{\alpha_{n}})$.

 We  frequently use $\vec{1}$ as a  vector in which all entries are equal to $1$, similarly $\vec{0}$ and  $\vec{2}$, the dimension of those vectors are omitted as they are clear from the context. Also, for a subset $F \subset \{1, \ldots, n\}$ and coefficientes $\alpha, \beta \in \mathbb{N}^{n}$, we use notations such as  $x_{\alpha_{F} + \beta_{F^{c}}}$ to indicate the element in $\mathds{X}_{n}$, in which the coordinates in  $F$ are the same as the ones from $x_{\alpha}$  and  the coordinates in  $F^{c}$ are the same as the ones from $x_{\beta}$.

 Even though the results presented in Section \ref{Bernsteinfunctionsofordern},	 \ref{Partialindependencetests} and \ref{sumsbased}   are on a discrete scenario, it is convenient to use an integral terminology to simplify some expressions. For that, we define 
 
 \begin{equation*}\label{medidaarbi}
 	\mathcal{M}( \mathds{X}_{n}):=\{\text{The vector space of all discrete measures in } \mathds{X}_{n}\}. 
 \end{equation*}

 Some important subspaces of $\mathcal{M}( \mathds{X}_{n})$ for the development of this text are  
 
 \begin{equation*}\label{medidacomplmarg}
 	\mathcal{M}_{n}( \mathds{X}_{n}):=\{ \mu \in \mathcal{M}( \mathds{X}_{n}), \quad \mu (\prod_{i=1}^{n}A_{i} )=0, \text{ if } | \{i,\quad  A_{i}=X_{i}\} |\geq 1 \}, 
 \end{equation*}
 which will be important in Section \ref{Bernsteinfunctionsofordern}. Also
 \begin{equation*}\label{medidamarg}
 	\mathcal{M}_{2}( \mathds{X}_{n}):=\{\mu \in \mathcal{M}( \mathds{X}_{n}), \quad \mu (\prod_{i=1}^{n}A_{i} )=0, \text{ if } | \{i,\quad  A_{i}=X_{i}\} |\geq n-1 \}. 
 \end{equation*}
 which is  related to independence tests, in this sense we emphasize  Lemma \ref{exm01xn} which will be used to prove   Theorem \ref{berns2sevndimpart2}. More generally, for $0\leq k \leq n$ 
 \begin{equation*}\label{medidamargk}
 	\mathcal{M}_{k}( \mathds{X}_{n}):=\{\mu \in \mathcal{M}( \mathds{X}_{n}), \quad \mu (\prod_{i=1}^{n}A_{i} )=0, \text{ if } | \{i,\quad  A_{i}=X_{i}\} |\geq n-k+1 \}. 
 \end{equation*}	
 which for $2\leq k \leq n$ are a generalization of independence test, in this sense we emphasize Theorem \ref{generallancaster} and Theorem \ref{genlancastercartesianproduct} which are  used to prove Theorem \ref{bernsksevndimpart3}. Note that $\mathcal{M}_{0}( \mathds{X}_{n}):= \mathcal{M}( \mathds{X}_{n})$ and that $	\mathcal{M}_{1}( \mathds{X}_{n})$ is related to the definition of conditionally negative definite kernels in $ \mathds{X}_{n}$. They satisfy the following  inclusion relation
 
 \begin{equation}\label{inclumeas}
 	\mathcal{M}_{n}( \mathds{X}_{n}) \subset \mathcal{M}_{n-1}( \mathds{X}_{n}) \subset  \ldots \subset \mathcal{M}_{2}( \mathds{X}_{n}) \subset \mathcal{M}_{1}( \mathds{X}_{n}) \subset \mathcal{M}_{0}( \mathds{X}_{n}).  
 \end{equation}

 A technical property that we frequently use for a measure $\mu$ in $ \mathcal{M}_{k}(\mathds{X}_{n})$  when  $k\geq 1$, is if $f: \mathds{X}_{n}\to \mathbb{R}$ only depends  of $k-1$ of its $n$ variables (for instance if $f(x_{1}, \ldots, x_{n}) = g(x_{1}, \ldots, x_{k-1})$ for some $g: \prod_{i=1}^{k-1}X_{i}\to \mathbb{R}$) then 
 \begin{equation}\label{integmu0n}
 	\int_{\mathds{X}_{n}}f(x_{1}, \ldots,x_{n})d\mu(x_{1}, \ldots, x_{n})=0.
 \end{equation}
 
 By the pigeonhole principle,  if $\mu_{i}  \in \mathcal{M}(X_{i})$, $1\leq i \leq n$,  with the restriction that  $|i, \quad \mu_{i}(X_{i})=0|\geq k$, then $(\bigtimes_{i=1}^{n}\mu_{i}  )$ is an element of $\mathcal{M}_{k}(\mathds{X}_{n})$. This crucial simple property appears in Theorem \ref{basicradialndim} and  \ref{bernsksevndimpart3}. 
 
 \begin{remark}\label{hanhjordanequivalence} When $k \geq 1$,  by the  Hahn-Jordan decomposition,  if $\mu \in \mathcal{M}_{k}(\mathds{X}_{n})$  then there exists an $M \in \mathbb{R}$ and  probabilities $P$ and $P^{\prime}$	in $\mathcal{M}(\mathds{X}_{n})$  such that $P_{F}= (P^{\prime})_{F}$ for any $F \subset \{1, \ldots, n\}$ that satisfies $|F|=k-1$ and
 	$$
 	\mu = M[P - P^{\prime}].
 	$$
 	Similarly, if two probabilities $P$ and $P^{\prime}$	in $\mathcal{M}(\mathds{X}_{n})$  are such that $P_{F}= (P^{\prime})_{F}$ for any $F \subset \{1, \ldots, n\}$ that satisfies $|F|=k-1$, then $P - P^{\prime}$ is an element of $\mathcal{M}_{k}(\mathds{X}_{n})$.
 \end{remark}
 
 An important class of examples for those spaces are the following measures.

 \begin{lemma}\label{measorderk}Let $1\leq k \leq n$ and  $x_{\vec{1}}, x_{\vec{2}}  \in \mathds{X}_{n} $, the measure
 	$$
 	\mu^{n}_{k}[x_{\vec{1}}, x_{\vec{2}}]:= \delta_{x_{\vec{1}}}+  \sum_{j=0}^{k-1}(-1)^{k-j}\binom{n-j-1}{n-k}\sum_{|F|=j}\delta_{x_{\vec{1}_{F} + \vec{2}_{F^{c}} }}.
 	$$
 	is an element of  	$\mathcal{M}_{k}( \mathds{X}_{n}) $. Further,  if $L:=\{i, \quad x_{i}^{1}\neq  x_{i}^{2}\}$, then  when  $|L|<   k$ the measure $\mu^{n}_{k}[x_{\vec{1}}, x_{\vec{2}}]$ is zero, otherwise
 	$$
 	\mu^{n}_{k}[x_{\vec{1}}, x_{\vec{2}}]= 	\mu^{|L|}_{k}[x_{\vec{1}_{L}}, x_{\vec{2}_{L}}]\times \delta_{x_{\vec{1}_{L^{c}}}}, 
 	$$
 	and when $|L|> k $ we have that $\mu^{|L|}_{k}[x_{\vec{1}_{L}}, x_{\vec{2}_{L}}] \notin \mathcal{M}_{k+1}^{|L|}( \mathds{X}_{L}) $.

 \end{lemma}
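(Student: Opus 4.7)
Plan: I will prove all three assertions by the same strategy, namely by projecting $\mu^{n}_{k}[x_{\vec{1}}, x_{\vec{2}}]$ onto a subset of coordinates and reducing every coefficient computation to a single binomial identity. Throughout let $h = |H|$, $s = |S|$, $e = |E|$, $\ell = |L|$.

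For the first claim (membership in $\mathcal{M}_{k}(\mathds{X}_{n})$) it suffices to show that the marginal onto every $H \subset \{1, \ldots, n\}$ with $h \leq k-1$ is zero, which is equivalent to the definition of $\mathcal{M}_{k}$ since the indicator of any product set $\prod_{i} A_{i}$ with at least $n-k+1$ factors equal to $X_{i}$ depends on at most $k-1$ coordinates. Grouping the atoms $\delta_{x_{\vec{1}_{F} + \vec{2}_{F^{c}}}}$ by $S := F \cap H$ and counting $F$'s of cardinality $j$ with prescribed intersection as $\binom{n-h}{j-s}$, the coefficient of $\delta_{x_{H, \vec{1}_{S} + \vec{2}_{H \setminus S}}}$ in the marginal becomes
\[
\mathbf{1}[S = H] + \sum_{j = s}^{k-1}(-1)^{k-j}\binom{n-j-1}{n-k}\binom{n-h}{j-s}.
\]
The main obstacle is to evaluate this sum, and everything reduces to the single identity
\[
\sum_{i=0}^{a}(-1)^{i}\binom{b-i}{b-a}\binom{c}{i} = \binom{b-c}{a} \qquad (a \leq b; \text{ generalized binomial when } b < c),
\]
which I would prove by generating functions: writing $\binom{b-i}{b-a} = [x^{a-i}](1+x)^{b-i}$, extending the sum to $i \geq 0$ (which does not alter $[x^{a}]$) and using $\sum_{i \geq 0}\binom{c}{i}(-x/(1+x))^{i} = (1+x)^{-c}$ to collapse it to $[x^{a}](1+x)^{b-c} = \binom{b-c}{a}$. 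Substituting $a = k-s-1$, $b = n-s-1$, $c = n-h$ into the marginal formula, the sum equals $(-1)^{k-s}\binom{h-s-1}{k-s-1}$: for $s < h$ one has $0 \leq h-s-1 < k-s-1$, so the ordinary binomial vanishes; for $s = h$ one has $b-c = -1$ and $\binom{-1}{k-h-1} = (-1)^{k-h-1}$, contributing $-1$ which cancels the leading $+1$.

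For the factorisation, observe that $\delta_{x_{i}^{1}} = \delta_{x_{i}^{2}}$ for every $i \in L^{c}$, so every atom of $\mu^{n}_{k}$ has its $L^{c}$-coordinates equal to $x^{1}_{L^{c}}$ and the measure splits as $\nu \otimes \delta_{x^{1}_{L^{c}}}$ for some $\nu$ supported on $\mathds{X}_{L}$. Repeating the marginal computation above with $H$ replaced by $L$ and $E := F \cap L$, the coefficient of $\delta_{x_{L, \vec{1}_{E} + \vec{2}_{L \setminus E}}}$ in $\nu$ equals $(-1)^{k-e}\binom{\ell-e-1}{k-e-1}$ for $E \subsetneq L$, plus $\mathbf{1}[E = L]$. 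When $\ell < k$, the ordinary binomial vanishes for every $E \subsetneq L$ and the $E = L$ contribution of $-1$ kills $\delta_{x_{L}^{1}}$, so $\nu = 0$. When $\ell \geq k$, the identity $\binom{\ell-e-1}{k-e-1} = \binom{\ell-e-1}{\ell-k}$ makes these coefficients coincide with those prescribed by $\mu^{\ell}_{k}[x^{1}_{L}, x^{2}_{L}]$, giving the factorisation. Finally, when $\ell > k$, picking any $H \subset L$ with $h = k$ in the same marginal formula yields $b-c = k-s-1 = a$, so $\binom{b-c}{a} = 1$ uniformly, and the marginal simplifies to $\bigotimes_{i \in H}(\delta_{x_{i}^{1}} - \delta_{x_{i}^{2}})$, which is nonzero because $x_{i}^{1} \neq x_{i}^{2}$ for every $i \in H \subset L$; hence $\mu^{\ell}_{k}[x^{1}_{L}, x^{2}_{L}] \notin \mathcal{M}_{k+1}(\mathds{X}_{L})$.
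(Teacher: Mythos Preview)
Your proof is correct and takes a genuinely different route from the paper's.  The paper argues by induction on $n$: it shows the recursion $\mu^{n+1}_{k}(\,\cdot\times X_{n+1})=\mu^{n}_{k}(\cdot)$ using only Pascal's identity, and iterates down to the base case $\mu^{k}_{k}=\bigotimes_{i}(\delta_{x_{i}^{1}}-\delta_{x_{i}^{2}})$; the factorisation through $L$ is obtained by a parallel recursion when one coordinate pair coincides.  You instead compute every marginal in closed form via the single alternating Vandermonde convolution $\sum_{i}(-1)^{i}\binom{b-i}{b-a}\binom{c}{i}=\binom{b-c}{a}$, justified by a short generating-function calculation.  This direct approach delivers in one stroke the vanishing of the $(k-1)$-marginals, the factorisation through $L$, and the explicit tensor-product form of the $k$-marginal (hence non-membership in $\mathcal{M}_{k+1}$), at the cost of a sharper identity and some care with generalised binomial coefficients at $b-c=-1$.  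The paper's inductive argument is more elementary and also exposes the structural relation $\mu^{n+1}_{k}\to\mu^{n}_{k}$ that is reused later (e.g.\ in the proof of Theorem~\ref{generallancaster}); your argument is shorter and gives the marginal coefficients explicitly.
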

 
 \begin{proof} We prove this result by induction in $n$. If $n=k$, we have that	
 	$$
 	\mu^{k}_{k}[x_{\vec{1}}, x_{\vec{2}}]:= \delta_{x_{\vec{1}}}+  \sum_{j=0}^{k-1}(-1)^{k-j}\sum_{|F|=k}\delta_{x_{\vec{1}_{F} + \vec{2}_{F^{c}} }}=  \bigtimes_{i=1}^{k}(\delta_{x_{i}^{1}} - \delta_{x_{i}^{2}}),
 	$$	
 	which is clearly a measure in $\mathcal{M}_{k}( \mathds{X}_{k}) $ which is nonzero if and only if all coordinates of $x_{\vec{1}}$ and $x_{\vec{2}}$ are different.\\	
 	For $0\leq j \leq  n+1$ we define the measure $P_{j}^{n+1}:=\sum_{|F|=j}\delta_{x_{\vec{1}_{F} + \vec{2}_{F^{c}} }} $ in $\mathcal{M}( \mathds{X}_{n+1}) $. For an  $F \subset \{1, \ldots, n+1\}$ with  $|F|= j$,  either $F$ does not contain the element $n+1$ or
 	it contains and has $j-1$ elements in $\{1, \ldots, n\}$. From the the bijection between those sets $F$ that contains the element $n+1$ and the subsets of  $\{1, \ldots, n\}$ that has $j-1$ terms, we obtain that (we define $P_{-1}^{n}$ as the zero measure)
 	\begin{equation}\label{measorderkeq1}
 		P_{j}^{n+1}(A \times A_{n+1})=P_{j}^{n}(A)\delta_{x_{n+1}^{2}}(A_{n+1}) + P_{j-1}^{n}(A)\delta_{x_{n+1}^{1}}(A_{n+1}),
 	\end{equation}
 	for every $A \subset \mathds{X}_{n},   A_{n+1} \subset X_{n+1}$. Thus, we obtain that
 	\begin{align*}
 		\mu^{n+1}_{k}[x_{\vec{1}}, x_{\vec{2}}]&(A \times X_{n+1})= \delta_{x_{\vec{1}}}(A)+  \sum_{j=0}^{k-1}(-1)^{k-j}\binom{n-j}{n+1-k}[P_{j}^{n}(A)  + P_{j-1}^{n}(A)]	\\
 		&=\delta_{x_{\vec{1}}}(A)+  \sum_{j=0}^{k-1}(-1)^{k-j}\left [\binom{n-j}{n+1-k}-\binom{n-j-1}{n+1-k}  \right ]P_{j}^{n}(A)\\
 		&=\mu^{n }_{k}[x_{\vec{1}}, x_{\vec{2}}](A).
 	\end{align*}
 	Applying this relation recursively $n-k+1$ times we obtain that
 	$$
 	\mu^{n+1}_{k}[x_{\vec{1}}, x_{\vec{2}}](A \times (\prod_{i=k+1}^{n}  X_{i} ) )= \mu^{k}_{k}[x_{\vec{1}}, x_{\vec{2}}](A)= \bigtimes_{i=1}^{k}(\delta_{x_{i}^{1}} - \delta_{x_{i}^{2}})(A),
 	$$
 	more generally, by the symmetry of this measure, we have that for any $F \subset \{1, \ldots, n+1\}$ with $|F|= k$
 	$$
 	\mu^{n+1}_{k}[x_{\vec{1}}, x_{\vec{2}}]((\prod _{i\in F}A_{i})\times (\prod _{i\in F^{c}} X_{i}))=\bigtimes_{i\in F}(\delta_{x_{i}^{1}} - \delta_{x_{i}^{2}}) (\prod _{i\in F}A_{i})
 	$$
 	thus concluding that $\mu^{n+1}_{k}[x_{\vec{1}}, x_{\vec{2}}] \in \mathcal{M}_{k}( \mathds{X}_{n+1}) $. \\
 	Now, note that if $x_{n+1}^{1}= x_{n+1}^{2}$, Equation \ref{measorderkeq1} can be rewritten as
 	$$
 	P_{j}^{n+1}(A \times A_{n+1})=[P_{j}^{n}(A) + P_{j-1}^{n}(A)] \delta_{x_{n+1}^{1}}(A_{n+1}), \quad A \subset \mathds{X}_{n}, \quad  A_{n+1} \subset X_{n+1},
 	$$
 	and then
 	\begin{align*}
 		&\mu^{n+1}_{k}[x_{\vec{1}}, x_{\vec{2}}](A \times A_{n+1})\\
 		&= \delta_{x_{\vec{1}}}(A)\delta_{x_{n+1}^{1}}(A_{n+1})-  \sum_{j=0}^{k-1}(-1)^{k-j}\binom{n-j}{n+1-k}[P_{j}^{n}(A)  + P_{j-1}^{n}(A)] \delta_{x_{n+1}^{1}}(A_{n+1})\\
 		&	=\mu^{n }_{k}[x_{\vec{1}}, x_{\vec{0}}](A) \delta_{x_{n+1}^{1}}(A_{n+1}).
 	\end{align*}
 	By the symmetry of the measures  involved and  applying the previous equality  recursively  we obtain the second part of the Lemma.\end{proof}

 We conclude this Section by presenting  additional properties for the cases $2\leq k \leq n$. The one for the case $k=2$  is very important for the equivalence between embedding independence tests in Hilbert spaces and the kernels related to  the vector space $\mathcal{M}_{2}(\mathds{X}_{n})$, presented in Theorem \ref{berns2sevndimpart2}.

 \begin{lemma}\label{exm01xn}
 	Let   $\mu_{i}  \in \mathcal{M}(X_{i})$, $1\leq i \leq n$,  with the restriction that for at least two of those measures $\mu_{i}(X_{i})=0$, then there exists an $M \geq 0$, a probability $P$ in $\mathcal{M}(\mathds{X}_{n})$  with marginals $P_{i}$ in $\mathcal{M}(X_{i})$, for which $ \left (\bigtimes_{i=1}^{n}\mu_{i}\right ) = (-1)^{n}M [ P - (\bigtimes_{i=1}^{n} P_{i})]$.	
 \end{lemma}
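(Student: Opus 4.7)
The plan is to give an explicit formula for $M$, $P$, and the marginals $P_i$, and then verify each required property. If some $\mu_i$ is the zero measure then $\bigtimes_{i=1}^{n}\mu_i = 0$ and the conclusion holds trivially with $M=0$ and $P$ arbitrary, so I may assume every $\mu_i$ is nonzero. I would set
$$
M := \prod_{i=1}^{n}|\mu_i|(X_i) > 0, \qquad P_i := \frac{|\mu_i|}{|\mu_i|(X_i)}, \qquad P := \bigtimes_{i=1}^{n}P_i + \frac{(-1)^n}{M}\bigtimes_{i=1}^{n}\mu_i,
$$
where $|\mu_i|$ is the total variation of $\mu_i$. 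With this choice the identity $\bigtimes_{i=1}^{n}\mu_i = (-1)^n M(P - \bigtimes_{i=1}^{n}P_i)$ is immediate from the definition of $P$, so the whole task reduces to showing that $P$ is a probability on $\mathds{X}_n$ whose $i$-th marginal equals $P_i$.

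The total-mass and marginal checks follow quickly from the hypothesis that at least two indices $i$ satisfy $\mu_i(X_i)=0$. On the one hand, $(\bigtimes_{i=1}^{n}\mu_i)(\mathds{X}_n) = \prod_i \mu_i(X_i) = 0$, so $P(\mathds{X}_n) = (\bigtimes_{i=1}^{n}P_i)(\mathds{X}_n) = 1$. On the other, integrating $\bigtimes_{i=1}^{n}\mu_i$ over every coordinate except the $j$-th produces $\mu_j \prod_{i \neq j} \mu_i(X_i)$, which vanishes because at least one remaining index still has $\mu_i(X_i)=0$; hence the $j$-th marginal of $P$ coincides with that of $\bigtimes_{i=1}^{n}P_i$, namely $P_j$.

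The only step that really uses anything is nonnegativity of $P$, which is where the factor $(-1)^n$ is doing essential work. Using that $|\mu_i|(\{x_i\}) = |\mu_i(\{x_i\})|$ for a discrete signed measure, at any point $x=(x_1,\dots,x_n) \in \mathds{X}_n$ one computes
$$
M \cdot P(\{x\}) = \prod_{i=1}^{n} |\mu_i(\{x_i\})| + (-1)^n \prod_{i=1}^{n} \mu_i(\{x_i\}),
$$
and this is nonnegative because the second product has absolute value at most the first. The main obstacle, such as it is, lies in guessing the right formula for $P$ from the start; once the formula is in hand, every verification reduces to the elementary observation that a product in which even one factor vanishes is zero, applied separately to $\prod_i \mu_i(X_i)$ and to each $\prod_{i\neq j}\mu_i(X_i)$.
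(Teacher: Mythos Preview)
Your proof is correct and considerably more direct than the paper's. The paper writes each $\mu_i$ via its Hahn--Jordan decomposition into two probabilities, assembles $P$ as an explicit weighted sum of product probabilities, and then computes all marginals by hand; nonnegativity of $P$ is built in because every summand is a nonnegative multiple of a product of probabilities. You instead define $P$ in one line as $\bigtimes_i P_i$ plus the signed correction $(-1)^n M^{-1}\bigtimes_i\mu_i$, and obtain nonnegativity from the pointwise inequality $\bigl|\prod_i \mu_i(\{x_i\})\bigr|\le \prod_i|\mu_i|(\{x_i\})$. In fact your $P_i=|\mu_i|/|\mu_i|(X_i)$ and your $M$ coincide with the paper's, so you are constructing the very same probability $P$; the difference is purely in how nonnegativity is verified. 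Your route is shorter and makes the role of the $(-1)^n$ transparent, while the paper's explicit decomposition mirrors the pattern used later (Lemma~\ref{exm02xn}, Theorem~\ref{genlancastercartesianproduct}), where a bare correction term would not obviously remain nonnegative. One cosmetic point: in your last display the second product has absolute value \emph{equal to} (not merely at most) the first, so $M\cdot P(\{x\})\in\{0,\,2\prod_i|\mu_i(\{x_i\})|\}$.
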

 
 \begin{proof}We may assume that all measures are nonzero, because otherwise we take $M=0$.  \\
 	For convenience, we assume that $\{i, \quad \mu_{i}(X_{i})=0\}= \{1, 2\}$, then  for $i\in \{1,2\}$ a Jordan decomposition of $\mu_{i}$  can be written as $\mu_{i}= b_{i}^{1}[S_{i}^{1}  - S_{i}^{2} ]$, where $b_{i}$ is positive  and $S_{i}^{1}, S_{i}^{2}$ are probabilities in $X_{i}$. Then 
 	$$
 	\mu_{1}\times \mu_{2} = B[S_{1}^{1} - S_{1}^{2}]\times [S_{2}^{1} - S_{2}^{2}]= B \left [ (S_{1}^{1}\times S_{2}^{1} + S_{1}^{2}\times S_{2}^{2}   )  -(S_{1}^{2}\times S_{2}^{1} + S_{1}^{1}\times S_{2}^{2}   )\right ] 
 	$$  
 	where $B=  b_{1}b_{2}$.  Also, for $i\geq 3$,  if $\mu_{i}= c_{i}^{1}R_{i}^{1}  - c_{i}^{2}R_{i}^{2} $ is a  Hahn-Jordan decomposition, where $c_{i}^{1}, c_{i}^{2}$ are nonnegative and $R_{i}^{1}, R_{i}^{2}$ are probabilities in $X_{i}$,   then  
 	$$
 	\bigtimes_{i=3}^{n}\mu_{i} = \sum_{\beta\in \mathbb{N}_{2}^{n-2}}(-1)^{n-2- |\beta|}c_{\beta}R_{\beta} = (-1)^{n} \left [ \sum_{|\beta| \in 2\mathbb{N}}c_{\beta}R_{\beta} - \sum_{|\beta| \in 2\mathbb{N}+1}c_{\beta}R_{\beta}\right ] 
 	$$
 	where $c_{\beta} := \prod_{i=3}^{n}c_{i}^{\beta(i-2)}$ and $R_{\beta }:= \bigtimes_{i=1}^{n-2}R_{i}^{\beta(i-2)}$. Thus
 	\begin{align*}
 		&(-1)^{n}\bigtimes_{i=1}^{n}\mu_{i}\\
 		&= B\left [ (S_{1}^{1}\times S_{2}^{1} + S_{1}^{2}\times S_{2}^{2}   )  -(S_{1}^{2}\times S_{2}^{1} + S_{1}^{1}\times S_{2}^{2}   )\right ]   \times \left [ \sum_{|\beta| \in 2\mathbb{N}}c_{\beta}R_{\beta} - \sum_{|\beta|   \in 2\mathbb{N}+1}c_{\beta}R_{\beta}\right ]  \\  
 		&= B  \left ( S_{1}^{1}\times S_{2}^{1} + S_{1}^{2}\times S_{2}^{2}    \right ) \times \left ( \sum_{|\beta| \in 2\mathbb{N}}c_{\beta}R_{\beta} \right ) + B\left (S_{1}^{2}\times S_{2}^{1} + S_{1}^{1}\times S_{2}^{2} \right ) \times \left (  \sum_{|\beta| \in 2\mathbb{N}+1}c_{\beta}R_{\beta} \right )\\
 		&  -B\left ( S_{1}^{1}\times S_{2}^{1} + S_{1}^{2}\times S_{2}^{2}  \right ) \times \left (  \sum_{|\beta| \in 2\mathbb{N}+1}c_{\beta}R_{\beta} \right )-B\left ( S_{1}^{2}\times S_{2}^{1} + S_{1}^{1}\times S_{2}^{2} \right ) \times \left (  \sum_{|\beta| \in 2\mathbb{N}}c_{\beta}R_{\beta} \right ). 
 	\end{align*}
 	Define the probability
 	$$
 	P:= \frac{1}{D}\left [\left ( S_{1}^{1}\times S_{2}^{1} + S_{1}^{2}\times S_{2}^{2}    \right ) \times \left ( \sum_{|\beta| \in 2\mathbb{N}}c_{\beta}R_{\beta} \right ) + \left (S_{1}^{2}\times S_{2}^{1} + S_{1}^{1}\times S_{2}^{2} \right ) \times \left (  \sum_{|\beta| \in 2\mathbb{N}+1}c_{\beta}R_{\beta} \right ) \right ]
 	$$
 	where $D = 2\sum_{\beta\in \mathbb{N}_{2}^{n-2}}c_{\beta}$. Then, 
 	\begin{align*}
 		P_{\{2, \ldots n\}}&= \frac{1}{D} \left [  \left (  S_{2}^{1} +  S_{2}^{2}  \right ) \times \left ( \sum_{|\beta| \in 2\mathbb{N}}c_{\beta}R_{\beta} \right ) +  \left ( S_{2}^{1} +  S_{2}^{2}  \right ) \times \left (  \sum_{|\beta| \in 2\mathbb{N}+1}c_{\beta}R_{\beta} \right )  \right ]\\
 		&= \frac{2}{D}     \left ( \frac{ S_{2}^{1} +  S_{2}^{2}}{2}   \right ) \times \left ( \sum_{\beta\in \mathbb{N}_{2}^{n-2}}c_{\beta}R_{\beta} \right )  \\
 		&=   \left ( \frac{ S_{2}^{1} +  S_{2}^{2}}{2}   \right )\times \left ( \bigtimes_{j=3 }^{n} \frac{c_{j}^{1}R_{j}^{1} + c_{j}^{2}R_{j}^{2}}{c_{j}^{1} + c_{j}^{2}}   \right ), 
 	\end{align*}
 	and similarly,
 	$$
 	P_{\{1,3 \ldots n\}}=   \left ( \frac{ S_{1}^{1} +  S_{1}^{2}}{2}   \right )\times \left ( \bigtimes_{j=3 }^{n} \frac{c_{j}^{1}R_{j}^{1} + c_{j}^{2}R_{j}^{2}}{c_{j}^{1} + c_{j}^{2}}   \right ).
 	$$		
 	To conclude, from those relations we obtain that 
 	$$
 	P_{i}= \frac{S_{i}^{1} + S_{i}^{2}}{2}, \text{ for $1\leq i \leq 2$  and } P_{i}= \frac{c_{j}^{1}R_{j}^{1} + c_{j}^{2}R_{j}^{2}}{c_{j}^{1} + c_{j}^{2}}, \text{ for $i \geq 3$},
 	$$
 	and
 	\begin{align*}
 		&\bigtimes_{j=1}^{n} P_{i}= \left(\frac{1}{4} \frac{1}{\prod_{j=3}^{n}(c_{j}^{1} +   c_{j}^{2})} \right) \left ( \sum_{\alpha \in \mathbb{N}_{2}^{2}} S_{\alpha}  \right ) \times \left (  \sum_{\beta \in \mathbb{N}_{2}^{n-2}} c_{\beta}R_{\beta}   \right )\\
 		&= \frac{1}{2D}\left ( S_{1}^{1}\times S_{2}^{1} + S_{1}^{2}\times S_{2}^{2}    \right ) \times \left ( \sum_{|\beta| \in 2\mathbb{N}}c_{\beta}R_{\beta} \right ) + \frac{1}{2D}\left (S_{1}^{2}\times S_{2}^{1} + S_{1}^{1}\times S_{2}^{2} \right ) \times \left (  \sum_{|\beta| \in 2\mathbb{N}+1}c_{\beta}R_{\beta} \right )\\
 		&  +\frac{1}{2D}\left ( S_{1}^{1}\times S_{2}^{1} + S_{1}^{2}\times S_{2}^{2}  \right ) \times \left (  \sum_{|\beta| \in 2\mathbb{N}+1}c_{\beta}R_{\beta} \right )+ \frac{1}{2D}\left ( S_{1}^{2}\times S_{2}^{1} + S_{1}^{1}\times S_{2}^{2} \right ) \times \left (  \sum_{|\beta| \in 2\mathbb{N}}c_{\beta}R_{\beta} \right ),
 	\end{align*}
 	thus
 	\begin{align*}
 		& P -\bigtimes_{j=1}^{n} P_{i}\\
 		&= \frac{1}{2D}\left [\left ( S_{1}^{1}\times S_{2}^{1} + S_{1}^{2}\times S_{2}^{2}    \right ) \times \left ( \sum_{|\beta| \in 2\mathbb{N}}c_{\beta}R_{\beta} \right ) + \left (S_{1}^{2}\times S_{2}^{1} + S_{1}^{1}\times S_{2}^{2} \right ) \times \left (  \sum_{|\beta| \in 2\mathbb{N}+1}c_{\beta}R_{\beta} \right )\right ]\\
 		& - \frac{1}{2D}\left [\left ( S_{1}^{1}\times S_{2}^{1} + S_{1}^{2}\times S_{2}^{2}  \right ) \times \left (  \sum_{|\beta| \in 2\mathbb{N}+1}c_{\beta}R_{\beta} \right )+  \left ( S_{1}^{2}\times S_{2}^{1} + S_{1}^{1}\times S_{2}^{2} \right ) \times \left (  \sum_{|\beta| \in 2\mathbb{N}}c_{\beta}R_{\beta} \right )\right ]\\
 		&= \frac{1}{2D}  \left [\left [(S_{1}^{1}\times S_{2}^{1} + S_{1}^{2}\times S_{2}^{2}   )  -(S_{1}^{2}\times S_{2}^{1} + S_{1}^{1}\times S_{2}^{2}   ) \right ]\times \left ( \sum_{|\beta| \in 2\mathbb{N}}c_{\beta}R_{\beta} - \sum_{|\beta| \in 2\mathbb{N}+1}c_{\beta}R_{\beta} \right ) \right ]\\
 		&= \frac{(-1)^{n}}{2BD}\bigtimes_{i=1}^{n}\mu_{i}.
 \end{align*}\end{proof}

 For the additional property when $n=k$, we need to define two objects, the Streitberg and the Lancaster interaction measures.

 A partition $\pi$ of the set $\{1, \ldots, n\}$ is a collection of disjoint subsets $F_{1}, \ldots,  F_{\ell}$ of $\{1, \ldots, n\}$, whose union is the entire set. In particular, we always have that $1\leq \ell \leq n$ and we sometimes use the notation $|\pi|$ to indicate $\ell$, that is, the amount of disjoint subsets in the partition $\pi$. Given a probability $P$ in $\mathcal{M}(X_{i})$  we define
 $$
 P_{\pi}:= \bigtimes_{i=1}^{\ell}P_{F_{i}}
 $$
 where $P_{F_{i}}$ is the marginal probability in $ \mathds{X}_{F_{i}}$.

 A probability is called decomposable if there exists a  partition $\pi$  with $|\pi| \geq 2$ for which $P = P_{\pi}$. When $n=2$, a probability is decomposable if and only if $P = P_{1}\times P_{2} $, and  when $n=3$   a probability is decomposable $P$ when 
 $$
 P_{123} - (P_{12}\times P_{3}) -  (P_{13}\times P_{2})- (P_{23}\times P_{1})+  2(P_{1}\times P_{2}\times P_{3}) \text{ is the zero measure}.
 $$ 
 but the converse is not true, as can be seen in Appendix C of \cite{NIPS2013_076a0c97}.

 When $n\geq 4$, a sufficient condition  for when $P$ is decomposable similar to the one of 3 variables  gets more complicated, and the characterization was done in  Proposition $2$ \cite{streitberg1990lancaster} and  is the following:

 \begin{theorem}\label{streit}The collection of real numbers $a_{\pi}:= (-1)^{|\pi|-1}(|\pi|-1)!$, indexed over the partitions of the set $\{1, \ldots, n\}$, is the only one that satisfies the following conditions
 	\begin{enumerate}
 		\item[i)] $a_{\{1,\ldots, n\}}:=1$
 		\item[ii)] For any decomposable probability $P$ defined in the Cartesian product  $\mathds{X}_{n}$ the measure
 		$$
 		\Sigma[P] := \sum_{\pi} a_{\pi} P_{\pi}
 		$$
 		is the zero measure.
 		\item[iii)] The operator $\Sigma$ is invariant if we reverse the order of the sets $X_{i}$, $1\leq i \leq n$. More precisely, if $\sigma: \{1,\ldots, n\} \to \{1,\ldots, n\} $ is a bijection then
 		$$
 		\Sigma[P^{\sigma}]= [\Sigma[P]]^{\sigma}
 		$$
 		where for a  measure $\mu$ in $\mathds{X}_{n}$  the measure $\mu^{\sigma}$ is defined in $\prod_{i=1}^{n}X_{\sigma(i)}$ by $\mu^{\sigma}(\prod_{i=1}^{n}A_{\sigma(i)} ):=\mu (\prod_{i=1}^{n}A_{i} ) $, for measurable sets $A_{j}$ of $X_{j}$.
 	\end{enumerate}
 	
 \end{theorem}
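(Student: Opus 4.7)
The plan is to identify $a_\pi$ with $\mu_{\Pi_n}(\pi,\hat{1})$, the Möbius function on the partition lattice $\Pi_n$ (ordered by refinement so that $\hat{1}$ is the one-block partition), using the classical value $\mu(\pi,\hat{1})=(-1)^{|\pi|-1}(|\pi|-1)!$ obtained via the interval isomorphism $[\pi,\hat{1}]\cong\Pi_{|\pi|}$. Existence will come from the classical link between the Möbius sum and joint cumulants; uniqueness will come from extracting a single scalar identity from (ii) and applying Möbius inversion.

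\textbf{Existence.} Conditions (i) and (iii) are immediate for the explicit formula: $|\hat{1}|=1$ gives $a_{\hat{1}}=(-1)^0\cdot 0!=1$, and $a_\pi$ depends only on $|\pi|$, which is a permutation invariant. For (ii), let $P=\bigtimes_{F\in\tau}P_F$ be decomposable with $|\tau|\ge 2$. Because $P$ already factors through $\tau$, every marginal of $P$ splits across $\tau$, yielding the key identity $P_\pi=P_{\pi\wedge\tau}$, where $\pi\wedge\tau$ is the meet in $\Pi_n$, with blocks $\{A\cap B:A\in\pi,\,B\in\tau,\,A\cap B\neq\emptyset\}$. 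Pairing against a product test function $f=\prod_i f_i$ and writing $m_B:=\int\prod_{i\in B}f_i\,dP_B$ gives
\[
\int f\,d\Sigma[P]=\sum_\pi a_\pi\prod_{B\in\pi\wedge\tau}m_B.
\]
Setting $X_i:=f_i(Y_i)$ for $\vec Y\sim P$, the right-hand side is exactly the classical Möbius expansion of the $n$-th joint cumulant $\kappa_n(X_1,\ldots,X_n)$ in terms of moment sequences. Since the $X_i$ inherit the $\tau$-factorization into $|\tau|\ge 2$ independent groups, $\kappa_n(\vec X)=0$. Product test functions span a separating family, so $\Sigma[P]=0$.

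\textbf{Uniqueness.} Let $(\tilde a_\pi)$ be any collection satisfying (i)--(iii). Repeating the pairing above with $\tilde a_\pi$ in place of the explicit coefficients and grouping terms by $\rho=\pi\wedge\tau$,
\[
0=\int f\,d\Sigma[P]=\sum_{\rho\le\tau}\Bigl(\sum_{\pi:\pi\wedge\tau=\rho}\tilde a_\pi\Bigr)\prod_{B\in\rho}m_B.
\]
As the marginals $P_F$ and factors $f_i$ vary, the $m_B$'s (indexed by subsets $B\subset\{1,\ldots,n\}$) are algebraically independent, forcing each polynomial coefficient to vanish. The specialization $\rho=\tau$ singles out the $\pi$'s satisfying $\pi\ge\tau$, which gives $\sum_{\pi\ge\tau}\tilde a_\pi=0$ for every $\tau\neq\hat{1}$. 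Combined with (i), this reads $\sum_{\pi\ge\sigma}\tilde a_\pi=[\sigma=\hat{1}]$ on all of $\Pi_n$, and Möbius inversion on the partition lattice recovers $\tilde a_\pi=\mu_{\Pi_n}(\pi,\hat{1})=(-1)^{|\pi|-1}(|\pi|-1)!$.

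\textbf{Main obstacle.} The technical heart is twofold: recognizing the sum $\sum_\pi a_\pi\prod_B m_B$ as the classical joint-cumulant expansion and invoking its vanishing on factor-independent variables (for the existence part of (ii)); and carefully justifying algebraic independence of the $m_B$'s to extract the scalar identity $\sum_{\pi\ge\tau}\tilde a_\pi=0$ from the measure-valued equation $\Sigma[P]=0$ (for uniqueness). Both rely on the structural identity $P_\pi=P_{\pi\wedge\tau}$, which is what forces partition-lattice combinatorics to govern the Streitberg interaction. Condition (iii) turns out to be implied by (i) and (ii) but is retained in the statement for exposition.
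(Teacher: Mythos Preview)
The paper does not prove Theorem~\ref{streit}; it is quoted verbatim from Proposition~2 of \cite{streitberg1990lancaster} and used as a black box. So there is no in-paper proof to compare against, and your write-up is essentially an independent proof of Streitberg's result via the standard M\"obius/cumulant route.

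Your existence argument is correct: the identity $P_\pi=P_{\pi\wedge\tau}$ for $\tau$-decomposable $P$ is exactly what reduces $\int f\,d\Sigma[P]$ to the moment--partition expansion of the joint cumulant $\kappa_n(f_1(Y_1),\ldots,f_n(Y_n))$, and the vanishing of $\kappa_n$ whenever the arguments split into $\ge 2$ independent blocks is the classical fact you invoke. The verification of (i) and (iii) for the explicit coefficients is immediate, and your remark that (iii) is actually forced by (i)+(ii) is also correct, since your uniqueness argument never uses (iii).

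The one genuine soft spot is the sentence ``the $m_B$'s \ldots\ are algebraically independent'' in the uniqueness half. As stated this is not obvious: for $B,B'$ contained in the same block $F\in\tau$, the moments $m_B,m_{B'}$ are both functionals of the single marginal $P_F$ and are subject to positivity constraints, so you cannot simply treat them as free indeterminates. What you actually need is much weaker---only that the coefficient of the $\rho=\tau$ monomial can be isolated---and this can be done by an explicit choice rather than a genericity claim. For each block $F\in\tau$ with $|F|=k_F$, take $X_i\in\{-1,+1\}$ for $i\in F$ with joint law $P_F(X=\epsilon)=2^{-k_F}(1+c_F\prod_{i\in F}\epsilon_i)$ for some small $c_F\neq 0$; then $m_B=0$ for every proper nonempty $B\subsetneq F$ while $m_F=c_F\neq 0$. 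With this choice $\prod_{B\in\rho}m_B=0$ for every $\rho<\tau$ and $\prod_{F\in\tau}m_F\neq 0$, so the identity $0=\sum_{\rho\le\tau}\bigl(\sum_{\pi:\pi\wedge\tau=\rho}\tilde a_\pi\bigr)\prod_{B\in\rho}m_B$ collapses to $\sum_{\pi\ge\tau}\tilde a_\pi=0$ directly. From there your M\"obius inversion step $\sum_{\pi\ge\sigma}\tilde a_\pi=[\sigma=\hat 1]\Rightarrow \tilde a_\pi=\mu_{\Pi_n}(\pi,\hat 1)$ is exactly right. Replacing the algebraic-independence appeal with this explicit construction closes the gap and leaves a complete proof.
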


 The measure $\Sigma[P]$ is  called the Streitberg interaction of the probability $P$. It is important to  emphasize that $\Sigma[P]$ can be the zero measure for a non decomposable probability.  It can be proved that the amount of  partitions in a set with $n$ elements is the Bell number $B_{n}$, see Section $26.7$ in \cite{NIST:DLMF}, 
 which are defined as $B_{0}:=1$ and with the recurrence relation
 $$
 B_{n+1}= \sum_{j=0}^{n}\binom{n}{j}B_{j}.
 $$

 Another measure of interaction is the Lancaster, see Chapter $XII$ page $255$  in \cite{lancaster1969chi}

 \begin{equation}
 	\Lambda[P]:=	 \sum_{|F|=0}^{n} (-1)^{n-|F|} \left ( P_{F} \times \left [ \bigtimes_{j \in F^{c}}P_{j}\right ]\right ),
 \end{equation}
 
 which satisfies that $\Lambda[P]=0$ if $P$ is the product of its marginals, but similarly to the Streitberg interaction, the converse does not hold. This interaction measure inspired  the definition and the previous result about the Streitberg interaction, and as Theorem \ref{streit} states, there are decomposable probabilities for which its Lancaster interaction are nonzero.

 \begin{lemma}\label{exm02xn} For a probability $P$ in   $\mathcal{M}(\mathds{X}_{n})$, both the Streitberg interaction $\Sigma[P]$ and the Lancaster interaction $\Lambda[P]$ are elements of  $\mathcal{M}_{n}(\mathds{X}_{n})$. Further, for measures   $\mu_{i}$  in $\mathcal{M}(\mathds{X}_{n})$  such that $\mu_{i}(X_{i})=0$, $1\leq i \leq n$,  there exists an $M \geq 0$ and a probability $P$ in    $\mathcal{M}(\mathds{X}_{n})$  for which $\Sigma[P] = \Lambda[P]= M(-1)^{n}(\bigtimes_{i=1}^{n}\mu_{i}) $.
 \end{lemma}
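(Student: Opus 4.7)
\medskip
\noindent\textbf{Proof plan.}

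For the first assertion, I would verify directly that setting one marginal coordinate to the whole space annihilates both interactions. Evaluating $\Sigma[P](\prod_i A_i)$ with $A_k=X_k$, I group the partitions $\pi$ of $\{1,\dots,n\}$ by the restricted partition $\pi'$ they induce on $\{1,\dots,n\}\setminus\{k\}$ after deleting $k$ from its block: the preimages of each $\pi'$ are $\pi'\cup\{\{k\}\}$ (with $|\pi|=|\pi'|+1$) and the $|\pi'|$ partitions obtained by inserting $k$ into an existing block of $\pi'$ (all with $|\pi|=|\pi'|$). The contribution of these partitions to the coefficient is
\[
(-1)^{|\pi'|}|\pi'|!+|\pi'|\,(-1)^{|\pi'|-1}(|\pi'|-1)!=0,
\]
so $\Sigma[P](\prod_i A_i)=0$. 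For $\Lambda[P]$, I pair each $F\not\ni k$ with $F\cup\{k\}$; after using $A_k=X_k$ both terms share the same measure-value but have opposite signs $(-1)^{n-|F|}$ and $(-1)^{n-|F|-1}$, so they cancel. This gives $\Sigma[P],\Lambda[P]\in\mathcal{M}_n(\mathds{X}_n)$.

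For the second assertion, I exclude the trivial case where some $\mu_i=0$ (take $M=0$) and write Hahn--Jordan decompositions $\mu_i=c_i(R_i^1-R_i^2)$ with $c_i>0$ and $R_i^1,R_i^2$ probabilities on $X_i$; the condition $\mu_i(X_i)=0$ forces equal masses. Writing $R_\alpha:=\bigtimes_{i=1}^n R_i^{\alpha_i}$ for $\alpha\in\{1,2\}^n$ and $s(\alpha):=|\{i:\alpha_i=2\}|$, I expand $\bigtimes_i\mu_i=(\prod_i c_i)\sum_\alpha(-1)^{s(\alpha)}R_\alpha$. I then propose the probability
\[
P:=\frac{1}{2^{n-1}}\sum_{\alpha:\, s(\alpha)\equiv n\,(\mathrm{mod}\,2)} R_\alpha,
\]
noting that exactly $2^{n-1}$ multi-indices satisfy the parity condition so $P$ has total mass one. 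The choice of parity is the delicate step: it is what produces a nonnegative $M$.

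A straightforward counting argument shows that for any proper subset $F\subsetneq\{1,\dots,n\}$ the number of $\alpha\in\{1,2\}^n$ with $s(\alpha)\equiv n$ and with prescribed $\alpha_F$ is $2^{|F^c|-1}$, independently of $\alpha_F$; hence $P_F=\bigtimes_{i\in F}\frac{R_i^1+R_i^2}{2}=\bigtimes_{i\in F}P_i$. On the other hand,
\[
P-\bigtimes_{i=1}^n P_i=\frac{1}{2^n}\sum_\alpha(-1)^{s(\alpha)-n}R_\alpha=\frac{(-1)^n}{2^n\prod_i c_i}\bigtimes_{i=1}^n\mu_i.
\]

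Finally I use the product structure of the $P_F$'s to collapse both interactions. Every partition $\pi$ with $|\pi|\ge 2$ has only blocks strictly smaller than $\{1,\dots,n\}$, so $P_\pi=\bigtimes_{i=1}^n P_i$; combined with $\Sigma[\bigtimes_i P_i]=0$ (a consequence of Theorem~\ref{streit} applied to the decomposable probability $\bigtimes P_i$, which yields $\sum_\pi a_\pi=0$), this gives $\Sigma[P]=P-\bigtimes_i P_i$. Similarly, every $F\neq\{1,\dots,n\}$ contributes $\bigtimes_i P_i$ in $\Lambda[P]$, and $\sum_{k=0}^n\binom{n}{k}(-1)^{n-k}=0$ leaves $\Lambda[P]=P-\bigtimes_i P_i$ as well. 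Setting $M:=1/(2^n\prod_i c_i)\ge 0$ yields $\Sigma[P]=\Lambda[P]=M(-1)^n\bigtimes_i\mu_i$, as required.
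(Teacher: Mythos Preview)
Your proposal is correct and follows essentially the same line as the paper's proof, particularly for the second assertion where your probability $P$ (the average over one parity class of the $R_\alpha$'s) coincides with the paper's construction, and the collapse of $\Sigma[P]$ and $\Lambda[P]$ to $P-\bigtimes_i P_i$ is argued identically.

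The one genuine difference is in showing $\Sigma[P]\in\mathcal{M}_n(\mathds{X}_n)$. You do this by direct combinatorics: grouping partitions of $\{1,\dots,n\}$ by the partition $\pi'$ they induce on $\{1,\dots,n\}\setminus\{k\}$ and checking that the Streitberg coefficients over each fibre cancel via $(-1)^{|\pi'|}|\pi'|!+|\pi'|(-1)^{|\pi'|-1}(|\pi'|-1)!=0$. The paper instead introduces an auxiliary decomposable probability $Q:=P_{\{1,\dots,n-1\}}\times\delta_z$, observes that $Q_\pi$ and $P_\pi$ agree on sets of the form $(\prod_{i<n}A_i)\times X_n$ for every $\pi$, and then invokes Theorem~\ref{streit} to conclude $\Sigma[P](\cdot\times X_n)=\Sigma[Q](\cdot\times X_n)=0$. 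Your argument is self-contained and recovers, in effect, the known identity $\sum_\pi a_\pi=0$ fibre by fibre; the paper's argument is shorter because it leverages the characterising property of the Streitberg coefficients without recomputing it. Both are equally valid.
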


 \begin{proof}Note that the case $n=2$ is proved in Lemma \ref{exm01xn}, because  for any probability $P$ we have that $\Sigma[P] = \Lambda[P]= P - P_{1}\times P_{2}$ and the restriction on the measures $\mu_{i}$ is the same.\\
 	Let a probability $P$ in   $\mathcal{M}(\mathds{X}_{n})$  and consider its Streitberg interaction $\Sigma[P]$.    Choose an arbitrary $z \in X_{n}$ and define  the  probability $Q$ in $\mathds{X}_{n}$ by 
 	$$
 	Q(\prod_{i=1}^{n}A_{i}):=P(\left [\prod_{i=1}^{n-1}A_{i}\right ]\times X_{n}) \delta_{z}(A_{n})= (P_{\{1, \ldots, n-1\}}\times \delta_{z})(\prod_{i=1}^{n}A_{i}). 
 	$$ 
 	In particular, $Q$ is decomposable and then by Theorem \ref{streit} $\Sigma(Q)$ is the zero measure. However, for any partition $\pi$ of $\{1, \ldots, n\}$ 
 	$$
 	Q_{\pi }(\left [\prod_{i=1}^{n-1}A_{i}\right ]\times X_{n} )=P_{\pi }(\left [\prod_{i=1}^{n-1}A_{i}\right ]\times X_{n} )
 	$$
 	thus concluding that 	$\Sigma[P]([\prod_{i=1}^{n-1}A_{i}]\times X_{n} )=\Sigma[Q]([\prod_{i=1}^{n-1}A_{i}]\times X_{n} )=0$. Due to the property $iii)$ in Theorem \ref{streit}, we conclude that $\Sigma[P] $ is an element of $\mathcal{M}_{n}(\mathds{X}_{n})$.\\
 	Now, for  the Lancaster interaction of the probability  $P$, since  either $n \in F$ or $n \in F^{c}$, $\Lambda[P]$ can be written as 
 	\begin{align*}
 		\Lambda[P]&=\sum_{|F|=0}^{n} (-1)^{n-|F|} \left ( P_{F} \times \left [ \bigtimes_{j \in F^{c}}P_{j} \right ] \right )\\
 		&=\sum_{|G|=0}^{n-1} (-1)^{n-|G|-1} \left (P_{G\cup\{n\}} \times \left [ \bigtimes_{j \in G^{c}}P_{j} \right ] \right ) + \sum_{|G|=0}^{n-1} (-1)^{n-|G|} \left ( P_{G} \times  \left [ \bigtimes_{j \in G^{c}\cup\{n\} }P_{j} \right ]\right ),
 	\end{align*} 
 	where  $G$ and its complement $G^{c}$ are always being considered as a subset of $\{1, \ldots, n-1\}$.  Thus, if $Q([\prod_{i=1}^{n-1}A_{i}]):= P([\prod_{i=1}^{n-1}A_{i}]\times X_{n} ) $
 	\begin{align*}
 		\Lambda[P]&(\left [\prod_{i=1}^{n-1}A_{i} \right ]\times X_{n} )\\
 		=&\sum_{|G|=0}^{n-1} (-1)^{n-|G|-1} \left(  Q_{G}\times \left  [ \bigtimes_{j \in G^{c}}Q_{j} \right ] \right )(\prod_{i=1}^{n-1}A_{i}) \\
 		& \quad + \sum_{|G|=0}^{n-1} (-1)^{n-|G|} \left(    Q_{G} \times \left [ \bigtimes_{j \in G^{c} }Q_{j} \right ] \right)(\prod_{i=1}^{n-1}A_{i})=0.
 	\end{align*}  
 	Now, regarding the product of the measures $\mu_{i}$, we  may assume that all of them are nonzero, because otherwise we take $M=0$.  \\
 	If $\mu_{i}= a_{i}^{1}P_{i}^{1}  - a_{i}^{2}P_{i}^{2} $ is a  Hahn-Jordan decomposition, where $a_{i}^{1}, a_{i}^{2}$ are nonnegative and $P_{i}^{1}, P_{i}^{2}$ are probabilities in $X_{i}$,  then $a_{i}^{1}=a_{i}^{2} >0$, and we write $a_{i}$ for this value.  In particular, the measure 
 	$$
 	\bigtimes_{i=1}^{n}\mu_{i} = C(\bigtimes_{i=1}^{n}[P_{i}^{1} - P_{i}^{2}])= C\sum_{\alpha\in \mathbb{N}_{2}^{n}}(-1)^{n- |\alpha|}P_{\alpha},
 	$$ 
 	where $C:= \prod_{i=1}^{n}a_{i}$ and $P_{\alpha }:= \bigtimes_{i=1}^{n}P_{i}^{\alpha(i)}$. Define the probability
 	$$
 	P:= \frac{1}{2^{n-1}}\sum_{|\alpha| \in 2\mathbb{N}}P_{\alpha},
 	$$
 	and note that the  marginal in the variables $\{1,\ldots, n-1\}$ satisfies
 	$$
 	P(\left [\prod_{i=1}^{n-1}A_{i}\right ]\times X_{n})= \frac{1}{2^{n-1}}\sum_{\alpha^{\prime} \in \mathbb{N}_{2}^{n-1}}P_{\alpha^{\prime}}(\prod_{i=1}^{n-1}A_{i})= \bigtimes_{i=1}^{n-1}\left [ \frac{P_{i}^{1} + P_{i}^{2}}{2}   \right ](A_{i}).  
 	$$
 	A similar relation occurs if we compute the  marginal on any set of size $n-1$. More generally, from these relations we directly obtain that  for any $F\subset \{1, \ldots, n\}$, with $1\leq |F|\leq n-1$ it holds that the  marginal in the variables $F$ satisfies 
 	\begin{equation}\label{exm02xneq1}
 		P_{F}= \bigtimes_{i \in F}\left [ \frac{P_{i}^{1} + P_{i}^{2}}{2}   \right ],  
 	\end{equation}
 	and from this, we obtain that at the exception of the full partition $|\pi |=1$, for any other partition $\pi $ of  $\{1, \ldots, n \}$ we have that $P_{\pi}= \bigtimes_{i=1}^{n}\left [ (P_{i}^{1} + P_{i}^{2})/2   \right ]$. Then
 	
 	$$
 	\Sigma[P]= P + \sum_{|\pi|\geq 2} (-1)^{|\pi|-1}(|\pi|-1)!\left (  \bigtimes_{i=1}^{n}\left [ \frac{P_{i}^{1} + P_{i}^{2}}{2}   \right ]  \right )= P-   \bigtimes_{i=1}^{n}\left [ \frac{P_{i}^{1} + P_{i}^{2}}{2}   \right ],
 	$$
 	$$
 	\Lambda[P]= P + \sum_{|F|=0}^{n-1} (-1)^{n-|F|} \left (  \bigtimes_{i=1}^{n}\left [ \frac{P_{i}^{1} + P_{i}^{2}}{2}   \right ]  \right )= P-  \bigtimes_{i=1}^{n}\left [ \frac{P_{i}^{1} + P_{i}^{2}}{2}   \right ],
 	$$
 	because both $\Sigma[\bigtimes_{i=1}^{n} (P_{i}^{1} + P_{i}^{2})/2   ]$ and $\Lambda[\bigtimes_{i=1}^{n}  (P_{i}^{1} + P_{i}^{2})/2   ]$ are the zero measure. To conclude, note that
 	
 	$$
 	P- \bigtimes_{i=1}^{n}\left [ \frac{P_{i}^{1} + P_{i}^{2}}{2}   \right ]= \frac{1}{2^{n }}\sum_{\alpha \in \mathbb{N}_{2}^{n}}(-1)^{|\alpha|}P_{\alpha}= \frac{(-1)^{n}}{2^{n }C}(\bigtimes_{i=1}^{n} \mu_{i}).
 	$$\end{proof}

 Lemma \ref{exm02xn} is used to provide a characterization for the radial PDI$_{n}^{\infty}$ kernels in Theorem \ref{basicradialndim}.

 Based on Lemma \ref{measorderk} and the properties in Lemma \ref{exm01xn} and Lemma \ref{exm02xn}, for discrete probabilities $P,Q \in \mathcal{M}(\mathds{X}_{n})$ we propose  the following generalization of the Lancaster interaction	
 \begin{equation}
 	\Lambda_{k}^{n}[P, Q]:=P+  \sum_{j=0}^{k-1}(-1)^{k-j}\binom{n-j-1}{n-k}\sum_{|F|=j}P_{F}\times Q_{F^{c}}.
 \end{equation}  
 
 When $Q =  \bigtimes_{i=1}^{n}P_{i}$ we simply write $	\Lambda_{k}^{n}[P]$.  Note that
 $$
 \Lambda_{n}^{n}[P]:=P+  \sum_{j=0}^{n-1}(-1)^{n-j}\sum_{|F|=j}P_{F}\times [\bigtimes_{i \in F^{c}}P_{i} ] = \Lambda[P],
 $$	
 also, $\Lambda_{k}^{n}[\delta_{x_{\vec{1}}}, \delta_{x_{\vec{2}}}]=\mu_{k}^{n}[ x_{\vec{1}}, x_{\vec{2}}] $ and
 $$
 \Lambda_{2}^{n}[P]:=P+  \sum_{j=0}^{1}(-1)^{2-j}\sum_{|F|=j}P_{F}\times [\bigtimes_{i \in F^{c}}P_{i} ] = P - \bigtimes_{i=1}^{n}P_{i}.
 $$

 \begin{theorem}\label{generallancaster} The generalized Lancaster interaction $\Lambda_{k}^{n}$ satisfies the following properties:
 	\begin{enumerate}
 		\item [$i)$] For discrete probabilities $P, Q$ in   $\mathcal{M}(\mathds{X}_{n})$,  the generalized Lancaster interaction $\Lambda_{k}^{n}[P,Q] \in \mathcal{M}_{k}(\mathds{X}_{n})$. 
 		\item [$ii)$] If for some $1\leq k \leq n-1$ we have that  $\Lambda_{k}^{n}[P]=0$ then $\Lambda_{k+1}^{n}[P]=0$.
 		\item [$iii)$] $\Lambda_{n}^{n}[P]$ is multiplicative, in the sense that if $P= P_{\pi}$ for some partition $\pi= F_{1}, \ldots, F_{\ell}$ of $\{1, \ldots, n\}$ then
 		$$
 		\Lambda_{n}^{n}[P]= \prod_{i=1}^{\ell}\Lambda_{|F_{i}|}^{|F_{i}|}[P_{F_{i}}].
 		$$	 
 	\end{enumerate}
 \end{theorem}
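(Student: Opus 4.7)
The plan is to dispose of $(i)$ and $(iii)$ by direct arguments that reduce them to identities for Dirac deltas, and to prove $(ii)$ by combining two combinatorial identities.

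For part $(i)$, my strategy is to exploit linearity. Marginalization is linear, so $\Lambda_k^n[\cdot,Q]$ is linear in $P$ and $\Lambda_k^n[P,\cdot]$ is affine in $Q$. Combined with the fact that $P = \sum_\ell q_\ell\, P$ whenever $Q = \sum_\ell q_\ell \delta_{y^\ell}$ is a probability, writing $P = \sum_i p_i\delta_{x^i}$ and using the observation $\Lambda_k^n[\delta_{x^i},\delta_{y^\ell}] = \mu_k^n[x^i,y^\ell]$ recorded immediately after the definition gives
\[
\Lambda_k^n[P,Q] = \sum_{i,\ell} p_i q_\ell\, \mu_k^n[x^i,y^\ell].
\]
Since each summand lies in $\mathcal{M}_k(\mathds{X}_n)$ by Lemma \ref{measorderk}, so does the sum.

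For part $(ii)$, I would prove two combinatorial identities by writing every measure involved in the basis $E_F[P] := P_F \times \bigtimes_{i \in F^c}P_i$ indexed by $F \subset \{1,\dots,n\}$ and matching coefficients. First, the \emph{marginalization identity}
\[
(\Lambda_k^n[P])_G = \Lambda_k^{|G|}[P_G] \qquad \text{for every } G \text{ with } k \leq |G| \leq n,
\]
whose proof reduces to a binomial identity verifiable through Newton's generalized binomial expansion. The case $|G|=k$ gives $(\Lambda_k^n[P])_G = \Lambda_k^k[P_G] = \Lambda[P_G]$. Second, the \emph{recursive identity}
\[
\Lambda_{k+1}^n[P] \;=\; \Lambda_k^n[P] \;-\; \sum_{|G|=k} \Lambda_k^k[P_G] \times \bigtimes_{i \in G^c} P_i,
\]
whose verification uses Pascal's identity $\binom{n-j}{k-j} = \binom{n-j-1}{k-j-1} + \binom{n-j-1}{k-j}$ to reconcile the coefficients $(-1)^{k-j}\binom{n-j-1}{n-k}$ with $(-1)^{k+1-j}\binom{n-j-1}{n-k-1}$. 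Combining the two: $\Lambda_k^n[P]=0$ forces $(\Lambda_k^n[P])_G = \Lambda_k^k[P_G] = 0$ for every $|G|=k$, so the right-hand side of the recursive identity vanishes, giving $\Lambda_{k+1}^n[P]=0$.

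For part $(iii)$, when $P = \bigtimes_{l=1}^\ell P_{F_l}$, every marginal factorizes as $P_H = \bigtimes_l P_{H \cap F_l}$, and consequently $E_F[P] = \bigtimes_l E_{F \cap F_l}[P_{F_l}]$ for every $F$. Via the bijection $F \leftrightarrow (F \cap F_1,\dots,F \cap F_\ell)$ between subsets of $\{1,\dots,n\}$ and tuples $(H_1,\dots,H_\ell)$ with $H_l \subset F_l$, together with the sign splitting $(-1)^{n-|F|} = \prod_l (-1)^{|F_l|-|H_l|}$,
\[
\Lambda_n^n[P] = \sum_F (-1)^{n-|F|} E_F[P] = \bigtimes_l \sum_{H_l \subset F_l}(-1)^{|F_l|-|H_l|} E_{H_l}[P_{F_l}] = \bigtimes_l \Lambda_{|F_l|}^{|F_l|}[P_{F_l}].
\]
The main obstacle is pinning down and verifying the recursive identity in $(ii)$. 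Its precise form can be guessed from the cases $n=3,k=2$ and $n=4,k=3$, but executing the general coefficient bookkeeping correctly is where the real work lies; once this and the marginalization identity are in hand, $(ii)$ is a one-line consequence, and $(i)$ and $(iii)$ are comparatively routine.
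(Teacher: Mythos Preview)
Your proposal is correct and, for parts $(ii)$ and $(iii)$, follows essentially the same route as the paper. In $(ii)$ the paper also uses the marginalization identity $(\Lambda_k^n[P])_G=\Lambda_k^{|G|}[P_G]$ together with exactly your ``recursive identity'' (it derives the latter inline by substituting the expression for $P_G$ obtained from $\Lambda_k^k[P_G]=0$ into $\sum_{|G|=k}P_G\times\bigtimes_{i\in G^c}P_i$ and then into $\Lambda_{k+1}^n[P]$; your formulation as an identity valid for all $P$ is a clean repackaging of the same computation). In $(iii)$ the paper reduces by induction to the bipartition case $\ell=2$ and then uses precisely your bijection argument; you simply run the bijection for general $\ell$ in one shot, which is equally valid and slightly shorter.

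The genuine difference is in $(i)$. The paper proves it by a direct induction on $n$: it shows $\bigl(\Lambda_k^n[P,Q]\bigr)_{\{1,\dots,n-1\}}=\Lambda_k^{n-1}[P_{\{1,\dots,n-1\}},Q_{\{1,\dots,n-1\}}]$ via Pascal's identity, iterates down to $\Lambda_k^k$, and checks that $\Lambda_k^k$ has zero one-variable marginals. This yields the marginalization identity (the paper's Equation~\eqref{generallancastereq2}) as a byproduct, ready for use in $(ii)$. Your approach is more elegant: linearity in $P$ and affinity in $Q$ (the constant term $P$ is absorbed because the $q_\ell$ sum to $1$) reduce everything to $\Lambda_k^n[\delta_x,\delta_y]=\mu_k^n[x,y]$, which is exactly Lemma~\ref{measorderk}. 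The trade-off is that you lose the marginalization identity for free and must prove it separately in $(ii)$; you are aware of this, and the binomial identity you need there, namely $\sum_{l\ge 0}(-1)^l\binom{n-m}{l}\binom{n-h-l-1}{n-k}=\binom{m-h-1}{m-k}$, is indeed a one-line generating-function check.
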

 
 \begin{proof} Similar to the proof of Lemma \ref{measorderk}, for an  $F \subset \{1, \ldots, n\}$ with  $|F|= j$ and $1\leq j \leq n-1$,  either $F$ does not contain the element $n$ or
 	it contains and has $j-1$ elements in $\{1, \ldots, n-1\}$. From the  bijection between those sets $F$ that contains the element $n $ and the subsets of  $\{1, \ldots, n-1\}$ that has $j-1$ terms, we obtain that 
 	\begin{equation}\label{generallancastereq1}
 		\left [ \sum_{|F|=j}P_{F}\times Q_{F^{c}} \right ](A \times X_{n})=\left [ \sum_{|L|=j-1}P_{L}\times Q_{L^{c}} \right ](A) + \left [ \sum_{|L|=j}P_{L}\times Q_{L^{c}} \right ](A),
 	\end{equation}
 	where $A \subset \mathds{X}_{n-1}$ and $L$ and its complement $L^{c}$ are subsets of  $\{1, \ldots, n-1\}$.
 	Thus, we obtain that
 	\begin{align*}
 		&\Lambda_{k}^{n}[P, Q](A \times X_{n})\\
 		&= P_{\{1, \ldots, n-1\}}+\sum_{j=0}^{k-1}(-1)^{k-j}\binom{n-j-1}{n -k} \left [ \sum_{|L|=j-1}P_{L}\times Q_{L^{c}}     +     \sum_{|L|=j}P_{L}\times Q_{L^{c}} \right ] (A)\\
 		&=P_{\{1, \ldots, n-1\}}+\sum_{j=0}^{k-1}(-1)^{k-j} \left [ \binom{n-j-1}{n -k} - \binom{n-j-2}{n -k} \right ]      \sum_{|L|=j}P_{L}\times Q_{L^{c}}  (A) \\
 		&=	\Lambda_{k}^{n-1}[P_{\{1, \ldots, n-1\}}, Q_{\{1, \ldots, n-1\}}](A).
 	\end{align*}
 	Applying this relation recursively $n-k $ times we obtain that the marginal 
 	$$
 	\left ( \Lambda_{k}^{n}[P, Q]\right  )_{\{1, \ldots, k\}}= \Lambda_{k}^{k}[P_{\{1, \ldots, k\}}, Q_{\{1, \ldots , k\}}].
 	$$
 	More generally, by the symmetry of the measures involved, we have that for any $G \subset \{1, \ldots, n\}$ with $|G|\geq k$
 	\begin{equation}\label{generallancastereq2}
 		\left ( \Lambda_{k}^{n}[P, Q]\right  )_{G}= \Lambda^{|G|}_{k}[P_{G}, Q_{G}].
 	\end{equation}
 	Thus, to conclude the first part, note that if we apply the induction argument on the case  $n=k$                                                                                       
 	\begin{align*}
 		\Lambda_{n}^{n}[P, Q](A \times X_{n})&= P_{\{1, \ldots, n-1\}} +\sum_{j=1}^{n-1}(-1)^{n-j} \left [ \sum_{|L|=j-1}P_{L}\times Q_{L^{c}}     +     \sum_{|L|=j}P_{L}\times Q_{L^{c}} \right ] (A)=0,
 	\end{align*}	
 	where $A \subset \mathds{X}_{n-1}$, similarly for the other cases, thus  $\Lambda_{k}^{n}[P, Q] \in \mathcal{M}_{k}(\mathds{X}_{n})$.\\
 	To prove relation $(ii)$, since $\Lambda_{k}^{n}[P]=0$, Equation \ref{generallancastereq2} implies that for any $G \subset \{1, \ldots, n\}$ with $|G|= k$ we have that $\Lambda_{k}^{k}[P_{G}]=0$, that is 
 	$$
 	P_{G}=- \sum_{j=0}^{k-1}(-1)^{k-j} \sum_{|H|=j, H \subset G}P_{H}\times \left [\bigtimes_{i \in G \setminus{H}}P_{i} \right ].
 	$$
 	In particular, by multiplying each previous equality by  $\bigtimes_{i \in G^{c}}P_{i} $, and summing over all possibles  $G \subset \{1, \ldots, n\}$ with $|G|= k$ we have that
 	\begin{align*}
 		\sum_{|G|=k}P_{G}\times \left [\bigtimes_{i \in G^{c}}P_{i} \right ]&=  - \sum_{|G|=k}\sum_{j=0}^{k-1}(-1)^{k-j} \sum_{|H|=j, H \subset G}P_{H}\times \left [\bigtimes_{i \in H^{c}}P_{i} \right ]\\
 		&=- \sum_{j=0}^{k-1}(-1)^{k-j} \binom{n-j}{n-k}\sum_{|F|=j}P_{F}\times \left [\bigtimes_{i \in F^{c}}P_{i} \right ],
 	\end{align*}
 	where the last equality occurs because for a fixed  $F \subset \{1, \ldots, n\}$ with $|F|\leq  k$, to analyse the term that multiplies $P_{F}\times \left [\bigtimes_{i \in F^{c}}P_{i} \right ]$, we need to compute how many sets $G \subset \{1, \ldots, n\}$ with $|G|= k$ contains the subset $F$, which is precisely the binomial term  in the equality.  To conclude, note that
 	\begin{align*}
 		\Lambda_{k+1}^{n}[P]&=P -   \sum_{|F|=k}P_{F}\times \left [\bigtimes_{i \in F^{c}}P_{i} \right ]+ \sum_{j=0}^{k-1}(-1)^{k+1-j}\binom{n-j-1}{n-k+1}\sum_{|F|=j}P_{F}\times \left [\bigtimes_{i \in F^{c}}P_{i} \right ]\\
 		&=P +  \sum_{j=0}^{k-1}(-1)^{k-j}\left [-\binom{n-j-1}{n-k+1}  + \binom{n-j}{n-k}\right ]\sum_{|F|=j}P_{F}\times \left [\bigtimes_{i \in F^{c}}P_{i}  \right ]\\
 		&	=\Lambda_{k}^{n}[P]=0.
 	\end{align*}
 	By recurrence, it is sufficient to prove relation $iii)$ on the case $\ell=2$. Suppose that $P= P_{L}\times P_{L^{c}}$ for some $L \subset \{1, \ldots, n\}$, then 
 	\begin{align*}
 		&\Lambda_{|L|}^{|L|}[P_{L}]\times \Lambda_{n-|L|}^{n-|L|}[P_{L^{c}}]\\
 		&=\sum_{|F|=0,  F \subset L}^{|L|} \sum_{|G|=0,  G \subset L^{c}}^{n-|L|}(-1)^{n-|F|- |G|}	P_{F\cap L}\times  P_{G\cap L^{c}} \times  \left [\bigtimes_{j \in F^{c}\cap L}P_{j} \right ] \times \left [\bigtimes_{j \in G^{c}\cap L^{c}}P_{j} \right ]\\
 		&=\sum_{|F|=0,  F \subset L}^{|L|} \sum_{|G|=0,  G \subset L^{c}}^{n-|L|}(-1)^{n-|F|- |G|}	P_{F\cup G } \times  \left [\bigtimes_{j \in (F\cup G)^{c}}P_{j} \right ]\\
 		&=\sum_{|H|=0}^{n} (-1)^{n-|H|}	P_{H } \times \left [\bigtimes_{j \in H^{c}}P_{j}  \right ]= \Lambda_{n}^{n}[P],
 	\end{align*}	
 	where the first equality in the last line occurs because   any subset $H \subset \{1, \ldots, n\}$	 can be uniquely written as $ F\cup G$, where $F \subset L$ and $G\cup L^{c}$.\end{proof}

 Property $i)$ in Theorem \ref{generallancaster} generalizes Lemma \ref{measorderk} while property $iii)$  explains the differences  between the Lancaster and the Streiberg interactions  (note that with it we can easily deduce that $\Lambda_{n}[P]=0$ when $P=P_{\pi}$ and one of the sets in the partition $\pi$ is a singleton). Property $ii)$ emphasizes the role that $\Lambda_{k}^{n}[P]$ is  an indexed measure of independence for $P$.
 
 We conclude this section by providing a generalization of the second part of Lemma \ref{exm01xn} and Lemma \ref{exm02xn}, which will be used in Theorem \ref{bernsksevndimpart3}.

 \begin{theorem}\label{genlancastercartesianproduct}For measures   $\mu_{i}  \in \mathcal{M}(X_{i})$, $1\leq i \leq n$,  with the restriction that  $|i, \quad \mu_{i}(X_{i})=0|\geq n+1-k$,  there exists an $M \geq 0$ and a probability $P$ in    $\mathcal{M}(\mathds{X}_{n})$  for which $\Lambda_{k}^{n}[P]= M(-1)^{n}(\bigtimes_{i=1}^{n}\mu_{i}) $.
 \end{theorem}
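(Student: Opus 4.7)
The plan is to build a single probability $P$ by combining the constructions of Lemma \ref{exm01xn} ($k=2$) and Lemma \ref{exm02xn} ($k=n$) through a shared parity constraint, then compute $\Lambda_{k}^{n}[P]$ using the explicit marginals. After permuting coordinates (the statement is symmetric), I may assume $\mu_{i}(X_{i})=0$ precisely for $i=1,\ldots,k$ and that every $\mu_{i}$ is nonzero (otherwise take $M=0$). Apply Hahn-Jordan: for $i\le k$ write $\mu_{i}=a_{i}(P_{i}^{1}-P_{i}^{2})$ with $a_{i}>0$, and for $i>k$ write $\mu_{i}=c_{i}^{1}R_{i}^{1}-c_{i}^{2}R_{i}^{2}$ with $c_{i}^{1},c_{i}^{2}\ge 0$. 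Define the target one-dimensional marginals $P_{i}=(P_{i}^{1}+P_{i}^{2})/2$ for $i\le k$ and $P_{i}=(c_{i}^{1}R_{i}^{1}+c_{i}^{2}R_{i}^{2})/(c_{i}^{1}+c_{i}^{2})$ for $i>k$.

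Now I propose the probability
$$
P := \frac{1}{2^{k-1}C}\sum_{\substack{\alpha\in\{1,2\}^{k},\ \beta\in\{1,2\}^{n-k}\\ |\alpha|+|\beta|\text{ even}}} \Bigl(\bigtimes_{i=1}^{k}P_{i}^{\alpha(i)}\Bigr)\times c_{\beta}R_{\beta},
$$
where $C=\prod_{i>k}(c_{i}^{1}+c_{i}^{2})$, $c_{\beta}=\prod c_{i}^{\beta(i)}$, $R_{\beta}=\bigtimes R_{i}^{\beta(i)}$, and $|\cdot|$ counts 1's. This reduces to Lemma \ref{exm02xn}'s probability when $k=n$ and to Lemma \ref{exm01xn}'s probability when $k=2$. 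A one-line count ($2^{k-1}$ parity-compatible $\alpha$'s for each $\beta$, total weight $C$) shows $P(\mathds{X}_{n})=1$. For any $F=F_{1}\cup F_{2}$ with $|F|\le k-1$ (so $|F_{1}|\le k-1$), fixing $\alpha|_{F_{1}}=\gamma$ and $\beta|_{F_{2}}=\delta$ leaves at least one free coordinate in $\{1,\dots,k\}$, so exactly half the $\alpha$-extensions satisfy the parity constraint regardless of the $\beta$-extension; the $\beta$-extension sum collapses to $\prod_{i\in F_{2}^{c}\cap\{k+1,\dots,n\}}(c_{i}^{1}+c_{i}^{2})$, and the factors telescope to give $P_{F}=\bigtimes_{i\in F}P_{i}$.

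With this identity for $P_{F}$, every term in the Lancaster sum satisfies $P_{F}\times(\bigtimes_{i\in F^{c}}P_{i})=\bigtimes_{i=1}^{n}P_{i}$, so
$$
\Lambda_{k}^{n}[P]=P+A_{n,k}\bigtimes_{i=1}^{n}P_{i},\qquad A_{n,k}:=\sum_{j=0}^{k-1}(-1)^{k-j}\binom{n-j-1}{n-k}\binom{n}{j}.
$$
The key combinatorial identity to establish is $A_{n,k}=-1$. I would prove this by extracting the coefficient of $y^{k-1}$ in the identity $(1+y)^{n}(1+y)^{-(n-k+1)}=(1+y)^{k-1}$, using the expansion $(1+y)^{-m}=\sum_{i\ge 0}(-1)^{i}\binom{m+i-1}{i}y^{i}$. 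The coefficient on the left is $\sum_{j=0}^{k-1}(-1)^{k-1-j}\binom{n}{j}\binom{n-j-1}{k-j-1}$, which equals $1$ (the coefficient on the right), giving $A_{n,k}=-1$ and hence $\Lambda_{k}^{n}[P]=P-\bigtimes_{i=1}^{n}P_{i}$.

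It remains to identify $P-\bigtimes_{i=1}^{n}P_{i}$ with a nonnegative multiple of $(-1)^{n}\bigtimes_{i=1}^{n}\mu_{i}$. Expanding both sides in the product basis $(\bigtimes P_{i}^{\alpha(i)})\times c_{\beta}R_{\beta}$ using even/odd parity gives
$$
P-\bigtimes_{i=1}^{n}P_{i}=\frac{1}{2^{k}C}\sum_{\alpha,\beta}(-1)^{|\alpha|+|\beta|}\Bigl(\bigtimes P_{i}^{\alpha(i)}\Bigr)\times c_{\beta}R_{\beta},
$$
while a direct expansion of $\bigtimes_{i}\mu_{i}=\prod_{i\le k}a_{i}\cdot\bigtimes_{i\le k}(P_{i}^{1}-P_{i}^{2})\times\bigtimes_{i>k}(c_{i}^{1}R_{i}^{1}-c_{i}^{2}R_{i}^{2})$ identifies the same sum, up to the factor $(-1)^{n}\prod_{i\le k}a_{i}$. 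Therefore $\Lambda_{k}^{n}[P]=M(-1)^{n}\bigtimes_{i=1}^{n}\mu_{i}$ with $M=1/(2^{k}C\prod_{i\le k}a_{i})\ge 0$. The main obstacle is the combinatorial identity $A_{n,k}=-1$; the rest of the argument is careful bookkeeping on the templates already established in Lemmas \ref{exm01xn} and \ref{exm02xn}.
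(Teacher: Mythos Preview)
Your proposal is correct and follows essentially the same construction as the paper: the same probability $P$ (the paper writes the parity condition as ``$|\alpha|$ even and $|\beta|$ even, or both odd'' rather than ``$|\alpha|+|\beta|$ even'', but these are identical), the same marginal computation $P_{F}=\bigtimes_{i\in F}P_{i}$ for $|F|\le k-1$, and the same identification of $P-\bigtimes_{i}P_{i}$ with a multiple of $(-1)^{n}\bigtimes_{i}\mu_{i}$. The only substantive difference is your treatment of the identity $A_{n,k}=-1$: you prove it directly via the coefficient extraction in $(1+y)^{n}(1+y)^{-(n-k+1)}=(1+y)^{k-1}$, whereas the paper simply asserts it, relying implicitly on Lemma~\ref{measorderk} (taking $x_{\vec{1}}=x_{\vec{2}}$ there forces $\mu_{k}^{n}$ to be the zero measure, and evaluating at a single point gives $1+A_{n,k}=0$).
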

 \begin{proof} We  may assume that all of them are nonzero, because otherwise we take $M=0$.\\
 	Let $\mu_{i}  \in \mathcal{M}(X_{i})$, $1\leq i \leq n$,  with the restriction that  $|i, \quad \mu_{i}(X_{i})=0|\geq k$. For convenience, we assume that $\{i, \quad \mu_{i}(X_{i})=0\}= \{1, \ldots, k\}$, then similar to Lemma  \ref{exm02xn}, for $1\leq i \leq k$ a Jordan decomposition of $\mu_{i}$  can be written as  $\mu_{i}= b_{i}^{1}[S_{i}^{1}  - S_{i}^{2} ]$, where $b_{i}$ is positive  and $S_{i}^{1}, S_{i}^{2}$ are probabilities in $X_{i}$, and then 
 	$$
 	\bigtimes_{i=1}^{k}\mu_{i} = B(\bigtimes_{i=1}^{n}[S_{i}^{1} - S_{i}^{2}])= B\sum_{\alpha\in \mathbb{N}_{2}^{k}}(-1)^{k- |\alpha|}S_{\alpha}=  (-1)^{k}B\left [ \sum_{|\alpha| \in 2\mathbb{N}}S_{\alpha} - \sum_{|\alpha| \in 2\mathbb{N}+1}S_{\alpha}\right ], 
 	$$  
 	where $B= 1/ \prod_{i=1}^{k}b_{i}$.  Also, by Lemma  \ref{exm01xn}, for $i\geq k+1$,  if $\mu_{i}= c_{i}^{1}R_{i}^{1}  - c_{i}^{2}R_{i}^{2} $ is a  Hahn-Jordan decomposition, where $c_{i}^{1}, c_{i}^{2}$ are nonnegative and $R_{i}^{1}, R_{i}^{2}$ are probabilities in $X_{i}$,  we have that 
 	$$
 	\bigtimes_{i=k+1}^{n}\mu_{i} = \sum_{\beta\in \mathbb{N}_{2}^{n-k}}(-1)^{n-k- |\beta|}c_{\beta}R_{\beta} = (-1)^{n-k} \left [ \sum_{|\beta| \in 2\mathbb{N}}c_{\beta}R_{\beta} - \sum_{|\beta| \in 2\mathbb{N}+1}c_{\beta}R_{\beta}\right ] 
 	$$
 	where $c_{\beta} := \prod_{i=k+1}^{n}c_{i}^{\beta(i-k)}$ and $R_{\beta }:= \bigtimes_{i=1}^{n-k}R_{i}^{\beta(i-k)}$. Thus
 	\begin{align*}
 		&(-1)^{n}\bigtimes_{i=1}^{n}\mu_{i}=  B\left [ \sum_{|\alpha| \in 2\mathbb{N}}S_{\alpha} - \sum_{|\alpha| \in 2\mathbb{N}+1}S_{\alpha}\right ]  \times \left [ \sum_{|\beta| \in 2\mathbb{N}}c_{\beta}R_{\beta} - \sum_{|\beta|   \in 2\mathbb{N}+1}c_{\beta}R_{\beta}\right ]  \\  
 		&= B  \left (  \sum_{|\alpha| \in 2\mathbb{N}}S_{\alpha} \right ) \times \left ( \sum_{|\beta| \in 2\mathbb{N}}c_{\beta}R_{\beta} \right ) +   B \left ( \sum_{|\alpha| \in 2\mathbb{N}+1}S_{\alpha} \right ) \times \left (  \sum_{|\beta| \in 2\mathbb{N}+1}c_{\beta}R_{\beta} \right )\\
 		&  - B \left ( \sum_{|\alpha| \in 2\mathbb{N}}S_{\alpha} \right ) \times \left (  \sum_{|\beta| \in 2\mathbb{N}+1}c_{\beta}R_{\beta} \right )- B \left ( \sum_{|\alpha| \in 2\mathbb{N}+1}S_{\alpha}\right ) \times \left (  \sum_{|\beta| \in 2\mathbb{N}}c_{\beta}R_{\beta} \right ). 
 	\end{align*}
 	Define the probability
 	$$
 	P:= \frac{1}{D}\left [\left (  \sum_{|\alpha| \in 2\mathbb{N}}S_{\alpha} \right ) \times \left ( \sum_{|\beta| \in 2\mathbb{N}}c_{\beta}R_{\beta} \right ) + \left ( \sum_{|\alpha| \in 2\mathbb{N}+1}S_{\alpha} \right ) \times \left (  \sum_{|\beta| \in 2\mathbb{N}+1}c_{\beta}R_{\beta} \right ) \right ],
 	$$
 	where $D = 2^{k-1}\sum_{\beta\in \mathbb{N}_{2}^{n-k}}c_{\beta}$. Then, By the property of Equation \ref{exm02xneq1}, if $i \in \{1,\ldots,  k\}$
 	\begin{align*}
 		P_{\{1, \ldots n\}\setminus{i}}&= \frac{2^{k-1}}{D}  \left ( \bigtimes_{j=1, j \neq i }^{k} \frac{S_{j}^{1} + S_{j}^{2}}{2}   \right ) \times \left ( \sum_{|\beta| \in 2\mathbb{N}}c_{\beta}R_{\beta} \right ) \\
 		& \quad +  \frac{2^{k-1}}{D}\left ( \bigtimes_{j=1, j \neq i }^{k} \frac{S_{j}^{1} + S_{j}^{2}}{2}   \right ) \times \left (  \sum_{|\beta| \in 2\mathbb{N}+1}c_{\beta}R_{\beta} \right )  \\
 		&= \frac{2^{k-1}}{D}     \left ( \bigtimes_{j=1, j \neq i }^{k} \frac{S_{j}^{1} + S_{j}^{2}}{2}   \right ) \times \left ( \sum_{\beta\in \mathbb{N}_{2}^{n-k}}c_{\beta}R_{\beta} \right )  \\
 		&=  \left ( \bigtimes_{j=1, j \neq i }^{k} \frac{S_{j}^{1} + S_{j}^{2}}{2}   \right )\times \left ( \bigtimes_{j=k+1 }^{n} \frac{c_{j}^{1}R_{j}^{1} + c_{j}^{2}R_{j}^{2}}{c_{j}^{1} + c_{j}^{2}}   \right ). 
 	\end{align*}
 	To conclude, from this relation we obtain that 
 	$$
 	P_{i}= \frac{S_{i}^{1} + S_{i}^{2}}{2}, \text{ for $1\leq i \leq k$  and } P_{i}= \frac{c_{i}^{1}R_{i}^{1} + c_{i}^{2}R_{i}^{2}}{c_{i}^{1} + c_{i}^{2}}, \text{ for $i \geq k+1$},
 	$$
 	and from this  we obtain that  for any $F \subset \{1, \ldots, n\}$, with $|F|\leq k-1$ 
 	$$
 	P_{F}= \bigtimes_{i\in F }P_{i},  
 	$$ 
 	because there must exist a point in $\{1, \ldots, k\} \cap F^{c}$. Gathering all those relations we conclude that
 	\begin{align*}
 		\Lambda_{k}^{n}[P]&=P+  \sum_{j=0}^{k-1}(-1)^{k-j}\binom{n-j-1}{n-k}\sum_{|F|=j}P_{F}\times  \left [\bigtimes_{i \in F^{c}}P_{i} \right ]\\
 		&=P+ \sum_{j=0}^{k-1}(-1)^{k-j}\binom{n-j-1}{n-k}\sum_{|F|=j} \left [\bigtimes_{i=1}^{n} P_{i}  \right ]= P -\bigtimes_{i=1}^{n} P_{i}, 
 	\end{align*}
 	and
 	\begin{align*}
 		&\bigtimes_{j=1}^{n} P_{i}= \left(\frac{1}{2^{k}} \frac{1}{\prod_{j=k+1}^{n}(c_{j}^{1} +   c_{j}^{2})} \right) \left ( \sum_{\alpha \in \mathbb{N}_{2}^{k}} S_{\alpha}  \right ) \times \left (  \sum_{\beta \in \mathbb{N}_{2}^{n-k}} c_{\beta}R_{\beta}   \right )\\
 		&= \frac{1}{2D}\left [\left (  \sum_{|\alpha| \in 2\mathbb{N}}S_{\alpha} \right ) \times \left ( \sum_{|\beta| \in 2\mathbb{N}}c_{\beta}R_{\beta} \right ) + \left ( \sum_{|\alpha| \in 2\mathbb{N}+1}S_{\alpha} \right ) \times \left (  \sum_{|\beta| \in 2\mathbb{N}+1}c_{\beta}R_{\beta} \right ) \right ]\\
 		& + \frac{1}{2D}\left [\left (  \sum_{|\alpha| \in 2\mathbb{N}}S_{\alpha} \right ) \times \left ( \sum_{|\beta| \in 2\mathbb{N}+1}c_{\beta}R_{\beta} \right ) + \left ( \sum_{|\alpha| \in 2\mathbb{N} }S_{\alpha} \right ) \times \left (  \sum_{|\beta| \in 2\mathbb{N}+1}c_{\beta}R_{\beta} \right ) \right ] 
 	\end{align*}
 	thus
 	\begin{align*}
 		&\Lambda_{k}^{n}[P]= P -\bigtimes_{j=1}^{n} P_{i}\\
 		&= \frac{1}{2D}\left [\left (  \sum_{|\alpha| \in 2\mathbb{N}}S_{\alpha} \right ) \times \left ( \sum_{|\beta| \in 2\mathbb{N}}c_{\beta}R_{\beta} \right ) + \left ( \sum_{|\alpha| \in 2\mathbb{N}+1}S_{\alpha} \right ) \times \left (  \sum_{|\beta| \in 2\mathbb{N}+1}c_{\beta}R_{\beta} \right ) \right ]\\
 		&\quad - \frac{1}{2D}\left [\left (  \sum_{|\alpha| \in 2\mathbb{N}}S_{\alpha} \right ) \times \left ( \sum_{|\beta| \in 2\mathbb{N}+1}c_{\beta}R_{\beta} \right ) + \left ( \sum_{|\alpha| \in 2\mathbb{N} }S_{\alpha} \right ) \times \left (  \sum_{|\beta| \in 2\mathbb{N}+1}c_{\beta}R_{\beta} \right ) \right ]\\
 		&= \frac{1}{2D}  \left [\left (  \sum_{|\alpha| \in 2\mathbb{N}}S_{\alpha} - \sum_{|\alpha| \in 2\mathbb{N}+1}S_{\alpha}\right ) \times \left ( \sum_{|\beta| \in 2\mathbb{N}}c_{\beta}R_{\beta} - \sum_{|\beta| \in 2\mathbb{N}+1}c_{\beta}R_{\beta} \right ) \right ]\\
 		&=\frac{(-1)^{n}}{2BD}\bigtimes_{i=1}^{n}\mu_{i}.
 \end{align*}\end{proof}	
 
 \begin{remark} We expect that $\Lambda_{k}^{n}[P]$ also have a Lattice interpretation as the Streitberg and the Lancaster interaction  $\Lambda_{n}^{n}[P]$, see \cite{ip2004structural} , more specifically, with respect to the Lattice
 	$$
 	M_{k}(n):= \{ \text{$\pi$ is a partition of $\{1, \ldots, n\}$  and $\pi$ has at most $n-k$ non-singleton blocks} \}.
 	$$	
 	We also expect  that the Streitberg interaction have a similar generalization as $\Lambda_{k}^{n}[P]$. We obtained expressions for low values of $k$ and arbitrary $n$, but the combinatorial complexity of the  Streitberg interaction make it difficult to obtain closed formulas for the general case.  
 \end{remark}

 \begin{remark} Even though the results presented in this Section are stated for discrete probabilities, there is no need for such restriction. However, for extending these results it is convenient that either the product sigma algebra of $\mathds{X}_{n}$ or the space of probabilities have the property that
 	$$
 	P(\prod_{i=1}^{n}A_{i})= Q(\prod_{i=1}^{n}A_{i})
 	$$ 
 	for every measurable $A_{i} \subset X_{i}$ if and only if $P=Q$. For instance, this is the case for finite Radon measures, see Theorem $1.10$ page $24$  in \cite{Berg1984}.
 \end{remark}

 \section{Bernstein functions of order  $n$ in $n$ variables}\label{Bernsteinfunctionsofordern}
 
 As detailed in Section \ref{Terminology}, we use the notation $(\mathbb{R}^{d})_{n}$ as the $n-$Cartesian product of the Euclidean space $\mathbb{R}^{d}$, being then an $nd-$dimensional Euclidean space.

 \begin{definition}\label{defnpdikn} A   function $g:[0, \infty)^{n} \to \mathbb{R}$ is called   positive definite independent  of order $n$  in all Euclidean spaces (PDI$_{n}^{\infty}$)  if for every $d \in \mathbb{N}$  and $\mu \in \mathcal{M}_{n}( (\mathbb{R}^{d})_{n})$ it satisfies
 	$$
 	\int_{\mathbb{R}^{d} }	\int_{\mathbb{R}^{d} }(-1)^{n}g(\|u_{1} - v_{1}\|^{2}, \ldots,\|u_{n} - v_{n}\|^{2} )d\mu(u)d\mu(v) \geq 0.
 	$$	
 	If for every $d \in \mathbb{N}$  the previous inequality is an equality only when $\mu$ is the zero measure in $\mathcal{M}_{n}( (\mathbb{R}^{d})_{n})$, we say that $g$  is a strictly positive definite  independent function of order $n$  in all Euclidean spaces (SPDI$_{n}^{\infty}$).   
 \end{definition}

 The most important example of an  PDI$_{n}^{\infty}$ function is the fact that the   Kronecker product  of $n$ Bernstein functions in $[0, \infty)$ is PDI$_{n}^{\infty}$.  Indeed, let $g_{i}: [0, \infty)\to \mathbb{R}$, $1\leq i \leq n$, be nonzero  Bernstein functions in $[0, \infty)$ and consider its  Kronecker product
 $$
 (\times_{i=1}^{n}g_{i})(t_{1}, \ldots, t_{n}):= \prod_{i=1}^{n} g_{i}(t_{i}),
 $$
 
 Thus,  	by   Equation \ref{Kgamma} and Equation \ref{integmu0n}, by fixing  $w=(w_{1}, \ldots, w_{n})= \vec{0} \in (\mathbb{R}^{d})_{n} $   
 \begin{align*}
 	&\int_{(\mathbb{R}^{d})_{n} }	\int_{(\mathbb{R}^{d})_{n} }(-1)^{n}	(\times_{i=1}^{n}g_{i})(\|u_{1} - v_{1}\|^{2}, \ldots,\|u_{n} - v_{n}\|^{2} )d\mu(u)d\mu(v)\\
 	=& \int_{(\mathbb{R}^{d})_{n} }	\int_{(\mathbb{R}^{d})_{n} }(-1)^{n-1}	K^{\vec{0}}_{g_{n}}(u_{n}, v_{n})\left (\prod_{i=1}^{n-1} g_{i}(\|u_{i}- v_{i}\|^{2}) \right )d\mu(u)d\mu(v).       
 \end{align*}
 because the remaining $3$ terms in the definition of  $K^{\vec{0}}_{g_{n}}$  either do not depend on the $n$ variables of $u=(u_{1}, \ldots, u_{n})$ or the $n$ variables of $v=(v_{1}, \ldots, v_{n})$. Using this equality recursively we obtain that 
 $$
 \int_{(\mathbb{R}^{d})_{n} }	\int_{(\mathbb{R}^{d})_{n}}(-1)^{n} 	(\times_{i=1}^{n}g_{i})(u, v)d\mu(u)d\mu(v)= 	\int_{(\mathbb{R}^{d})_{n} }	\int_{(\mathbb{R}^{d})_{n} }	\prod_{i=1}^{n}	K^{\vec{0}}_{g_{i}}(u_{i}, v_{i})d\mu(u)d\mu(v) \geq 0
 $$	
 because the Kronecker product of PD kernels is an PD kernel as well. This equality is essentially a first  generalization  on the discrete case to several variables of Theorem $24$  in \cite{sejdinovic2013equivalence}, where it is proved the case $n=2$. This property also gives an explanation as to why the Distance Multivariance concept in \cite{Boettcher2019}  (more specifically, Theorem $3.4$)  works.

 If $g:[0, \infty)\to \mathbb{R}$ is an Bernstein function, then for every $\mu \in \mathcal{M}_{1}((\mathbb{R}^{d})_{1})$ (that is, $\mu \in \mathcal{M} (\mathbb{R}^{d})$ and $\mu(\mathbb{R}^{d})=0$) 
 $$
 \int_{\mathbb{R}^{d}}\int_{\mathbb{R}^{d}}g(\|u-v\|^{2})d\mu(u)d\mu(v) = \int_{\mathbb{R}^{d}}\int_{\mathbb{R}^{d}}G(\|u-v\|^{2})d\mu(u)d\mu(v), 
 $$
 where $G(t)= g(t)-g(0)$. Hence, we may suppose that $g$ is zero in the border of $[0, \infty)$, that is, $g(0)=0$. Next, we generalize this property for PDI$_{n}^{\infty}$ kernels. For that, we define the set 
 $$
 \partial_{n-1}^{n}:= \{t=(t_{1}, \ldots, t_{n}) \in [0, \infty)^{n}, \quad  t_{i}=0 \text{ for some } i   \}.
 $$

 \begin{lemma}\label{PDInsimpli}	Let $g: [0, \infty)^{n} \to \mathbb{R}$ be a  continuous  function and define
 	\begin{align*}
 		G(t_{\vec{1}}):= \sum_{\alpha \in (\mathbb{N}_{1}^{0})^{n}}(-1)^{n - |\alpha| }g(t_{\alpha}), \quad  t_{\vec{0}}, t_{\vec{1}} \in [0,\infty)^{n}  \text{ and } t_{\vec{0}}:= \vec{0}. 
 	\end{align*} 
 	Then, for any  $\mu \in \mathcal{M}_{n}(( \mathbb{R}^{d})_{n})$ 
 	\begin{align*} 
 		\int_{(\mathbb{R}^{d})_{n}}\int_{(\mathbb{R}^{d})_{n}}&(-1)^{n}g(\|u_{1} - v_{1}\|^{2}, \ldots,\|u_{n} - v_{n}\|^{2} )d\mu(u)d\mu(v)\\
 		&=\int_{(\mathbb{R}^{d})_{n}}\int_{(\mathbb{R}^{d})_{n}}(-1)^{n}G(\|u_{1} - v_{1}\|^{2}, \ldots,\|u_{n} - v_{n}\|^{2} )d\mu(u)d\mu(v),
 	\end{align*} 
 	hence, $g$ is PDI$_{n}^{\infty}$ if and only if  $G$ is PDI$_{n}^{\infty}$.\\
 	If  $g(t)=0$ for every $t \in \partial_{n-1}^{n}$ then $g=G$.\\
 	If $t \in \partial_{n-1}^{n}$ then $G(t)=0$.
 \end{lemma}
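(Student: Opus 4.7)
The plan is to handle the three assertions in reverse order, since the two pointwise identities feed directly into the integral one. For the last assertion, fix $t \in \partial_{n-1}^{n}$ and pick an index $i_{0}$ with $t_{i_{0}}=0$. I would pair each $\alpha \in \{0,1\}^{n}$ with the multi-index $\alpha'$ obtained by flipping its $i_{0}$-th coordinate. Because $t_{i_{0}}=0=(t_{\vec{0}})_{i_{0}}$, the vectors $t_{\alpha}$ and $t_{\alpha'}$ coincide, while $|\alpha|$ and $|\alpha'|$ differ by exactly one, so the signs $(-1)^{n-|\alpha|}$ and $(-1)^{n-|\alpha'|}$ are opposite. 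Summing these matched pairs collapses the defining sum of $G(t)$ to zero.

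For the second assertion, assume $g$ vanishes on $\partial_{n-1}^{n}$. For every $\alpha \in \{0,1\}^{n}$ with $\alpha \neq \vec{1}$ the vector $t_{\alpha}$ has at least one coordinate equal to $0$ (namely any $i$ with $\alpha_{i}=0$), hence $g(t_{\alpha})=0$. Only the term $\alpha=\vec{1}$ survives in the definition of $G$, with coefficient $(-1)^{n-n}=1$, so $G(t_{\vec 1})=g(t_{\vec 1})$.

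For the integral identity I would expand
$$
G(t) - g(t) = \sum_{\alpha \in \{0,1\}^{n},\, \alpha \neq \vec{1}} (-1)^{n-|\alpha|}\, g(\alpha_{1} t_{1}, \ldots, \alpha_{n} t_{n}).
$$
After substituting $t_{i}=\|u_{i}-v_{i}\|^{2}$, each summand becomes a function of $(u,v) \in (\mathbb{R}^{d})_{n}\times(\mathbb{R}^{d})_{n}$ that depends on $u$ only through those coordinates $u_{i}$ with $\alpha_{i}=1$; since $\alpha \neq \vec{1}$, at most $n-1$ of the $u$-variables appear. Applying Equation \ref{integmu0n} to $\mu \in \mathcal{M}_{n}((\mathbb{R}^{d})_{n})$ with $k=n$ forces the integral in $d\mu(u)$ to vanish for every fixed $v$, and by Fubini the double integral of $G-g$ against $\mu\otimes\mu$ equals zero. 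Multiplying by $(-1)^{n}$ produces the asserted equality of integrals, whence $g$ is PDI$_{n}^{\infty}$ if and only if $G$ is.

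There is no substantive obstacle here, only careful bookkeeping of signs and the key observation that for $\alpha \neq \vec{1}$ at least one block-variable $u_{i}$ is missing from the integrand, which is precisely the trigger for Equation \ref{integmu0n}.
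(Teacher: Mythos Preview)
Your proposal is correct and follows essentially the same approach as the paper. The only cosmetic difference is in the third assertion: you use an explicit sign-flip pairing over the coordinate $i_{0}$ with $t_{i_{0}}=0$, while the paper rewrites $G(t)=\int g(s)\,d\bigl(\bigtimes_{i=1}^{n}[\delta_{t_{i}}-\delta_{0}]\bigr)(s)$ and observes that one factor of the product measure vanishes---but your pairing is precisely the expansion of that observation, so the two arguments are equivalent.
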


 \begin{proof}Note that at the exception of the term related to $\alpha =\vec{1}$ in the definition of $G$, the other terms   depend on a maximum of  $n-1$ among the $n$ variables of  $t_{\vec{1}}$. As a consequence of  Equation \ref{integmu0n}, for every $\mu \in \mathcal{M}_{n}( (\mathbb{R}^{d})_{n})$  we obtain the equality
 	\begin{align*} 
 		\int_{(\mathbb{R}^{d})_{n}}\int_{(\mathbb{R}^{d})_{n}}&(-1)^{n}g(\|u_{1} - v_{1}\|^{2}, \ldots,\|u_{n} - v_{n}\|^{2} )d\mu(u)d\mu(v)\\
 		&=\int_{(\mathbb{R}^{d})_{n}}\int_{(\mathbb{R}^{d})_{n}}(-1)^{n}G(\|u_{1} - v_{1}\|^{2}, \ldots,\|u_{n} - v_{n}\|^{2} )d\mu(u)d\mu(v).
 	\end{align*} 
 	If  $g(t)=0$ for every $t \in \partial_{n-1}^{n}$ then $g=G$ by the way we defined $G$. For the remaining property, note   that we may rewrite $G$ as
 	$$
 	G(t_{1}, \ldots, t_{n})= \int_{[0, \infty)^{n}}g(s)d\left (\bigtimes_{i=1}^{n}[\delta_{t_{i}} - \delta_{\vec{0}}]\right )(s),
 	$$
 	thus, if at least $1$ coordinate of  $t$  is  zero, the above integral is related to the zero measure, consequently  $G(t)=0$ for every $t \in \partial^{n}_{n-1}$.\end{proof}	
 
 A convenient property that a function $g$  that is  PDI$_{n}^{\infty}$ and that is zero at 	$\partial_{n-1}^{n}$ has is the fact that it is nonnegative. Indeed, for an arbitrary $t \in [0, \infty)^{n}$, choose $d=1$ and $\mu= \times_{i=1}^{n} (\delta_{t_{i}} - \delta_{0})$, then 
 $$
 0 \leq \int_{\mathbb{R}^{d} }	\int_{\mathbb{R}^{d} }(-1)^{n}g(\|u_{1} - v_{1}\|^{2}, \ldots,\|u_{n} - v_{n}\|^{2} )d\mu(u)d\mu(v)=2^{n}g(t).
 $$

 Now, or objective is to characterize the PDI$_{n}^{\infty}$ functions.  First, we review and generalize a few results concerning completely monotone functions of several variables.

 \begin{definition}
 	A function $h:(0, \infty)^{n}\to \mathbb{R}$ is   completely monotone with $n$ variables  if $h \in C^{\infty}((0, \infty)^{n})$ and $(-1)^{|\alpha|}\partial^{\alpha}h(t) \geq 0$, for every $\alpha \in \mathbb{Z}_{+}^{n}$ and $t\in (0,\infty)^{n}$. 
 \end{definition}	
 
 Similar to the Hausdorff-Bernstein-Widder Theorem on completely monotone functions (one variable), the following equivalence holds, Section $4.2$ in \cite{Bochner2005}:
 
 \begin{theorem}\label{Bochnercomplsevndim} A  function $g:(0, \infty)^{n} \to \mathbb{R}$ is  completely monotone  with $n$ variables if and only if it can be represented as
 	$$
 	h(t)=\int_{[0,\infty)^{n}}e^{- r   \cdot  t  }d\eta(r), 
 	$$
 	where $\eta$ is a Borel nonnegative measure (possibly unbounded) on $[0,\infty)^{n}$. Further, the representation is unique.
 \end{theorem}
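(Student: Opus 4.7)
The plan is to prove the theorem by induction on the number of variables $n$, reducing to the classical one-variable Hausdorff-Bernstein-Widder theorem. The ``if'' direction is essentially immediate: given the integral representation, differentiating under the integral sign (justified by monotone/dominated convergence, since the integrand and its partial derivatives are nonnegative after the sign $(-1)^{|\alpha|}$ is inserted) shows that $(-1)^{|\alpha|}\partial^{\alpha}h(t)=\int_{[0,\infty)^{n}}r^{\alpha}e^{-r\cdot t}\,d\eta(r)\geq 0$ for every $\alpha\in\mathbb{Z}_{+}^{n}$ and $t\in(0,\infty)^{n}$. So the real content is the ``only if'' direction together with uniqueness.

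For the existence of the representing measure I would induct on $n$, with the base case $n=1$ being the classical theorem. For the inductive step, fix an arbitrary $t_{n}>0$ and observe that the function $(t_{1},\dots,t_{n-1})\mapsto h(t_{1},\dots,t_{n-1},t_{n})$ is completely monotone in $n-1$ variables. By the induction hypothesis it admits a unique nonnegative Borel measure $\mu_{t_{n}}$ on $[0,\infty)^{n-1}$ with
\[
h(t_{1},\dots,t_{n})=\int_{[0,\infty)^{n-1}}e^{-r'\cdot t'}\,d\mu_{t_{n}}(r'),\qquad t'=(t_{1},\dots,t_{n-1}).
\]
For each fixed Borel set $A\subset[0,\infty)^{n-1}$, I would then show that $t_{n}\mapsto\mu_{t_{n}}(A)$ is completely monotone on $(0,\infty)$. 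The key idea here is to evaluate $\mu_{t_{n}}(A)$ using the Bernstein inversion formula in the first $n-1$ variables (or equivalently, Stieltjes-type inversion applied to the Laplace transform), which expresses $\mu_{t_{n}}(A)$ as a limit of nonnegative linear combinations of values of $h(\,\cdot\,,t_{n})$ and its derivatives in $(t_{1},\dots,t_{n-1})$; each such term is completely monotone in $t_{n}$, and complete monotonicity is preserved under nonnegative linear combinations and pointwise limits. Applying the one-variable theorem again then yields a nonnegative Borel measure $\lambda_{A}$ on $[0,\infty)$ with $\mu_{t_{n}}(A)=\int_{[0,\infty)}e^{-r_{n}t_{n}}\,d\lambda_{A}(r_{n})$, and a standard monotone-class argument assembles $A\mapsto\lambda_{A}$ into a joint nonnegative Borel measure $\eta$ on $[0,\infty)^{n}$. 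Fubini then gives the desired representation.

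Uniqueness is the easier half and I would handle it directly: if $\eta_{1},\eta_{2}$ are two nonnegative Borel measures on $[0,\infty)^{n}$ with identical Laplace transforms on $(0,\infty)^{n}$, then the family of exponentials $\{e^{-r\cdot t}:t\in(0,\infty)^{n}\}$ is a multiplicatively closed subalgebra of $C_{0}([0,\infty)^{n})$ that separates points and vanishes nowhere, so by Stone--Weierstrass it is dense in $C_{0}([0,\infty)^{n})$, forcing $\eta_{1}=\eta_{2}$ by the Riesz representation theorem (after first noting that the measures are $\sigma$-finite because their Laplace transforms are finite).

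The main obstacle I expect is verifying that the set function $A\mapsto \lambda_{A}$ obtained from the second application of the one-variable theorem is $\sigma$-additive in $A$ and that the product $\eta(A\times B)$ it defines actually extends to a Borel measure on $[0,\infty)^{n}$; this requires combining the uniqueness in the one-variable theorem with a careful monotone-class / $\pi$-$\lambda$ argument to upgrade pointwise identities into a measure-theoretic statement. A cleaner alternative that bypasses this bookkeeping is to invoke Choquet's theorem on the cone of completely monotone functions of $n$ variables, whose extreme rays must be identified as exactly the maps $t\mapsto e^{-r\cdot t}$, $r\in[0,\infty)^{n}$; I would present the inductive proof as the primary route and mention Choquet only as an alternative.
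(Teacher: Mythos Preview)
The paper does not give a proof of this theorem; it simply cites it as a known result from Section~4.2 of \cite{Bochner2005}. There is therefore nothing in the paper to compare your proposal against.

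Your inductive strategy is essentially the classical one and is sound in outline. Two technical points deserve tightening. First, since the theorem explicitly allows $\eta$ (and hence each $\mu_{t_n}$) to be unbounded, the claim ``$t_n\mapsto\mu_{t_n}(A)$ is completely monotone for each Borel $A$'' is not well posed in general; you should first establish it for compact $A\subset[0,\infty)^{n-1}$, where finiteness of $\mu_{t_n}(A)$ follows from the lower bound $e^{-r'\cdot t'}\geq c>0$ on $A$, and then extend. A clean way to verify complete monotonicity there is to note that for each $k$ the function $(-1)^{k}\partial_{t_n}^{k}h(\,\cdot\,,t_n)$ is completely monotone in the first $n-1$ variables, hence represented by a nonnegative measure, and uniqueness of the $(n-1)$-variable representation forces this measure to coincide with $(-1)^{k}\partial_{t_n}^{k}\mu_{t_n}$ on compacts; this avoids appealing to an explicit inversion formula. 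Second, your Stone--Weierstrass/Riesz uniqueness argument applies directly only to finite measures; to handle the unbounded case, fix any $\varepsilon\in(0,\infty)^{n}$, observe that $e^{-\varepsilon\cdot r}\,d\eta(r)$ is finite with total mass $h(\varepsilon)$, apply the argument to these finite measures, and then let $\varepsilon\to 0$.
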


 \begin{definition}
 	A function $g:(0, \infty)^{n} \to \mathbb{R}$ is  called a  Bernstein function  of order $n$ in $(0, \infty)^{n}$ if $g \in C^{\infty}((0, \infty)^{n})$ and $\partial^{\vec{1}}g(t)$ is a completely monotone function  with $n$ variables, where  $\vec{1}=(1,1,\ldots, 1) \in \mathbb{N}^{n}$. 
 \end{definition}

 On the next Theorem we provide a representation  for some of those functions.
 
 \begin{theorem}\label{bernssevndim} A continuous function $g:[0, \infty)^{n} \to \mathbb{R}$ that satisfies $g(t)=0$ for every $t \in \partial_{n-1}^{n}$ is  a Bernstein function  of order $n$ in $(0, \infty)^{n}$ if and only if it can be represented as
 	\begin{align*}
 		g(t)=  \int_{[0,\infty)^{n}}\prod_{i=1}^{n}(1-e^{-r_{i}t_{i}})\frac{1 +r_{i}}{r_{i}}d\eta(r),
 	\end{align*}
 	where  the measure $\eta \in \mathfrak{M}([0,\infty)^{n})$ is nonnegative. Further, the representation is unique.
 \end{theorem}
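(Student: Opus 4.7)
The plan is to translate the statement about $g$ into one about its mixed partial $\partial^{\vec{1}}g$ and then invoke the Bochner-type Theorem \ref{Bochnercomplsevndim} for completely monotone functions of $n$ variables. For the sufficiency direction, assume $g$ admits the integral representation with some nonnegative $\eta\in\mathfrak{M}([0,\infty)^n)$. The pointwise bound $0\le (1-e^{-r_it_i})(1+r_i)/r_i\le 2\max(1,t_i)$ from \eqref{bern1ineq2} makes dominated convergence available and yields continuity of $g$ on $[0,\infty)^n$; vanishing of $g$ on $\partial_{n-1}^n$ is immediate from the representation, since the $i$-th factor is zero at $t_i=0$. Differentiating under the integral on $(0,\infty)^n$ (justified by the same bound on compact subsets) produces
\begin{equation*}
\partial^{\vec{1}}g(t)=\int_{[0,\infty)^n}\prod_{i=1}^n(1+r_i)\,e^{-r_i t_i}\,d\eta(r),
\end{equation*}
the Laplace transform of the nonnegative Borel measure $\prod_i(1+r_i)\,d\eta(r)$, which is completely monotone with $n$ variables by Theorem \ref{Bochnercomplsevndim}.

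For the converse, assume $g$ is Bernstein of order $n$. Theorem \ref{Bochnercomplsevndim} gives a unique nonnegative Borel measure $\nu$ on $[0,\infty)^n$ with $\partial^{\vec{1}}g(t)=\int e^{-r\cdot t}\,d\nu(r)$ for $t\in(0,\infty)^n$. The next step is to recover $g$ by iterated integration. Fix $t\in(0,\infty)^n$ and $0<\epsilon_i<t_i$; iterated one-dimensional FTC on the cube $\prod_i[\epsilon_i,t_i]$ yields
\begin{equation*}
\sum_{F\subseteq\{1,\ldots,n\}}(-1)^{n-|F|}g\bigl(t_F,\epsilon_{F^c}\bigr)=\int_{\epsilon_1}^{t_1}\!\!\cdots\!\int_{\epsilon_n}^{t_n}\partial^{\vec{1}}g(s)\,ds.
\end{equation*}
Letting $\epsilon\to\vec{0}$: the left side converges by continuity of $g$ on $[0,\infty)^n$ to $\sum_F(-1)^{n-|F|}g(t_F,\vec{0}_{F^c})$, in which only the term $F=\{1,\ldots,n\}$ survives because every other summand evaluates $g$ at a point of $\partial_{n-1}^n$; the right side converges by monotone convergence to $\int_{[0,t]}\partial^{\vec{1}}g(s)\,ds$, since $\partial^{\vec{1}}g\ge 0$. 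Hence $g(t)=\int_{[0,t]}\partial^{\vec{1}}g$, which by Fubini (justified by nonnegativity) equals $\int_{[0,\infty)^n}\prod_i(1-e^{-r_it_i})/r_i\,d\nu(r)$. Setting $d\eta(r):=\prod_i(1+r_i)^{-1}d\nu(r)$ delivers the desired representation. Finiteness $\eta([0,\infty)^n)\le g(\vec{1})$ follows by evaluating at $t=\vec{1}$ and using the lower bound $(1-e^{-s})(1+s)/s\ge 1$ from \eqref{bern1ineq}; uniqueness of $\eta$ is inherited from the uniqueness of $\nu$.

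The delicate point will be the limit $\epsilon\to\vec{0}$ in the iterated FTC identity: $g$ is a priori smooth only on the open orthant $(0,\infty)^n$, so the lower-order mixed partials need not extend continuously to $\partial_{n-1}^n$, and a naive iterated-integration argument would invoke those partials on the boundary. The form of the identity above sidesteps this obstacle by grouping all boundary contributions into values of $g$ itself (rather than of its derivatives), so that the prescribed hypothesis $g|_{\partial_{n-1}^n}=0$ cleanly kills the unwanted terms and leaves only the top-order integral of $\partial^{\vec{1}}g$; this is also what explains why no lower-order Bernstein-type summands appear in the final representation.
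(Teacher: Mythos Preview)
Your proposal is correct and follows essentially the same approach as the paper's proof: both directions hinge on Theorem~\ref{Bochnercomplsevndim}, the iterated fundamental theorem of calculus on a cube shrinking to the boundary, the boundary hypothesis to kill all but the top term, Fubini, and the reweighting $d\eta=\prod_i(1+r_i)^{-1}d\nu$ with finiteness via \eqref{bern1ineq}. Your exposition is a bit more explicit about the justification of differentiation under the integral and about why the limit $\epsilon\to\vec 0$ only sees values of $g$ rather than of its lower-order partials, but the skeleton is identical to the paper's argument.
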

 
 \begin{proof}Suppose that the  function $g$ admits the integral representation, then 
 	\begin{align*}
 		[\partial^{\vec{1}}g](t_{1}, \ldots, t_{n})=& \int_{[0,\infty)^{n}}\prod_{i=1}^{n}\left [e^{-r_{i}t_{i}}(1+r_{i})\right ]d\eta(r) = \int_{[0,\infty)^{n}}e^{-r\cdot t} \left [\prod_{i=1}^{n} (1+r_{i}) \right ]d\eta(r),
 	\end{align*}
 	which is clearly completely monotone by Theorem \ref{Bochnercomplsevndim}. The uniqueness of the integral representation for $g$ follows by the fact that completely monotone functions are also uniquely representable.\\
 	For the converse, since  $\partial^{\vec{1}}g$ is a completely monotone function,  Theorem \ref{Bochnercomplsevndim} implies that 
 	\begin{align*}
 		[\partial^{\vec{1}}g](t_{1}, \ldots , t_{n}) &= \int_{[0, \infty)^{n}} e^{-r\cdot t}d\sigma(r).
 	\end{align*}
 	By the fundamental Theorem of calculus in $n$ variables, if  $t^{1}_{1},\ldots, t^{1}_{n}, t^{2}_{1}, \ldots,  t^{2}_{n} >0$, with $t_{i}^{2} \geq t_{i}^{1}$, then
 	$$
 	\int_{\prod_{i=1}^{n}[t_{i}^{1},t^{2}_{i}]}[\partial^{\vec{1}}g](s_{1}, \ldots, s_{n})d(s_{1}, \ldots, s_{n})=\sum_{\alpha  \in \mathbb{N}_{2}^{n}} (-1)^{|\alpha|} g(t_{\alpha}).  
 	$$
 	By letting all $t_{i}^{1}\to 0$ and using the fact that $g$ is continuous on $[0, \infty)^{n}$, $\partial^{\vec{1}}g$ is a nonnegative function  and that $g$ is zero in $\partial_{n-1}^{n}$, we get that
 	$$
 	g(t_{1}^{2}, \ldots , t_{n}^{2})= \int_{\prod_{i=1}^{n}[0,t_{i}^{2}]}[\partial^{\vec{1}}g](s_{1}, \ldots, s_{n})d(s_{1}, \ldots, s_{n}).
 	$$
 	But 
 	\begin{align*}
 		\int_{\prod_{i=1}^{n}[0,t_{i}^{2}]}[\partial^{\vec{1}}g](s)ds&= \int_{\prod_{i=1}^{n}[0,t_{i}^{2}]}\left [\int_{[0, \infty)^{n}} e^{-r\cdot s}d\sigma(r)\right ]ds\\
 		&=\int_{[0, \infty)^{n}}\left [\int_{\prod_{i=1}^{n}[0,t_{i}^{2}]} e^{-r\cdot s}ds\right ]d\sigma(r)\\
 		&=\int_{[0, \infty)^{n}}\left [\prod_{i=1}^{n}\frac{1-e^{-r_{i}t_{i}^{2}}}{r_{i}}\right ]d\sigma(r).
 	\end{align*}
 	Similar to the proof of Theorem $3.2$ in \cite{Schilling2012}, the measure $d\eta(r):=(\prod_{i=1}^{n}1/(1+r_{i}))d\sigma(r)
 	$ is finite because by  Equation \ref{bern1ineq}
 	$$
 	\int_{[0, \infty)^{n}} \prod_{i=1}^{n}\frac{1}{1+r_{i}}d\sigma(r) \leq \int_{[0, \infty)^{n}} \prod_{i=1}^{n}\frac{1-e^{-r_{i}}}{r_{i}}d\sigma(r)=g(1,\ldots, 1), 
 	$$
 	w hich conludes the proof of the representation. \end{proof}

 Next results provides the characterization of all  PDI$_{n}^{\infty}$ functions, being an extension of the classical Schoenberg result of Theorem  \ref{reprcondneg}.

 \begin{theorem}\label{basicradialndim}
 	Let $g: [0, \infty)^{n} \to \mathbb{R}$ be a  continuous function such that $g(t)=0$ for every $t \in \partial_{n-1}^{n}$. The following conditions are equivalent:
 	\begin{enumerate}
 		\item [$(i)$] For any $d\in \mathbb{N}$ and discrete  measures  $\mu_{i}$ in $\mathbb{R}^{d}$ such that $\mu_{i}(\mathbb{R}^{d})=0$, $1\leq i \leq n$,  it holds that
 		$$
 		\int_{(\mathbb{R}^{d})_{n}}\int_{ (\mathbb{R}^{d})_{n}}(-1)^{n}g(\|x_{1}-y_{1}\|^{2}, \ldots, \|x_{n} - y_{n}\|^{2})d[\bigtimes_{i=1}^{n}\mu_{i}](x)d[ \bigtimes_{i=1}^{n}\mu_{i}](y)\geq 0.
 		$$
 		\item [$(ii)$] For any $d\in \mathbb{N}$ and discrete  probability  $P$ in $(\mathbb{R}^{d})_{n}$   its Lancaster interaction $\Lambda[P]$ satisfies that
 		$$
 		\int_{(\mathbb{R}^{d})_{n}}\int_{ (\mathbb{R}^{d})_{n}}(-1)^{n}g(\|x_{1}-y_{1}\|^{2}, \ldots, \|x_{n} - y_{n}\|^{2})d[\Lambda[P]](x)d[\Lambda[P]](y)\geq 0.
 		$$
 		\item [$(ii^{\prime})$] For any $d\in \mathbb{N}$ and discrete  probability  $P$ in $(\mathbb{R}^{d})_{n}$   its Streitberg interaction $\Sigma[P]$ satisfies that
 		$$
 		\int_{(\mathbb{R}^{d})_{n}}\int_{ (\mathbb{R}^{d})_{n}}(-1)^{n}g(\|x_{1}-y_{1}\|^{2}, \ldots, \|x_{n} - y_{n}\|^{2})d[\Sigma[P]](x)d[\Sigma (P)](y)\geq 0.
 		$$
 		\item [$(iii)$] The  function $g$ is  PDI$_{n}^{\infty}$.
 		\item [$(iv)$] The function $g$ can be represented as
 		\begin{align*}
 			g(t)=\int_{[0,\infty)^{n}}\prod_{i=1}^{n}(1-e^{-r_{i}t_{i}})\frac{1 +r_{i}}{r_{i}}d\eta(r)
 		\end{align*}
 		where  the measure $\eta \in \mathfrak{M}([0,\infty)^{n})$ is nonnegative. The representation is unique.
 		\item [$(v)$] The function $g $  is a Bernstein function   of  order $n$.
 	\end{enumerate}
 \end{theorem}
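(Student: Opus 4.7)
The plan is to close the cycle (iv)$\Rightarrow$(iii)$\Rightarrow$(i)$\Rightarrow$(iv), together with the side implications (iii)$\Rightarrow$(ii),(ii$^\prime$) and (ii),(ii$^\prime$)$\Rightarrow$(i), plus (iv)$\Leftrightarrow$(v). The implication (iv)$\Rightarrow$(iii) is immediate: for each fixed $r\in[0,\infty)^n$ the function $(t_1,\ldots,t_n)\mapsto\prod_i(1-e^{-r_it_i})(1+r_i)/r_i$ is the Kronecker product of $n$ one-variable Bernstein functions (the Schoenberg integrands), which was shown to be PDI$_n^\infty$ in the discussion following Definition \ref{defnpdikn}; integration against the nonnegative $\eta$ preserves this property. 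Next (iii)$\Rightarrow$(i) holds by the pigeonhole observation preceding Remark \ref{hanhjordanequivalence}, and (iii)$\Rightarrow$(ii),(ii$^\prime$) because Lemma \ref{exm02xn} places $\Sigma[P],\Lambda[P]\in\mathcal{M}_n((\mathbb{R}^d)_n)$. The reverse (ii),(ii$^\prime$)$\Rightarrow$(i) follows from the second half of Lemma \ref{exm02xn}, which realizes every admissible product measure $\bigtimes_{i=1}^n\mu_i$ as $M(-1)^n\Sigma[P]=M(-1)^n\Lambda[P]$ for some probability $P$. Finally (iv)$\Leftrightarrow$(v) is Theorem \ref{bernssevndim}.

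The heart of the proof is (i)$\Rightarrow$(iv), which I would approach by reducing to the one-variable Schoenberg Theorem \ref{reprcondneg}. Fix $(s_2,\ldots,s_n)\in(0,\infty)^{n-1}$, pick points $a_i\neq b_i\in\mathbb{R}$ with $\|a_i-b_i\|^2=s_i$, set $\mu_i=\delta_{a_i}-\delta_{b_i}$ for $i\geq 2$, and let $\mu_1$ be an arbitrary discrete signed measure on $\mathbb{R}^d$ with $\mu_1(\mathbb{R}^d)=0$. Expanding the iterated integral in (i) and using that $g$ vanishes on $\partial_{n-1}^n$ kills every term in which some coordinate $i\geq 2$ satisfies $x_i=y_i$, and a direct sign count produces the overall factor $(-1)^n(-2)^{n-1}=-2^{n-1}$. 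Thus (i) reduces to $\int\int g(\|x_1-y_1\|^2,s_2,\ldots,s_n)\,d\mu_1(x_1)\,d\mu_1(y_1)\leq 0$ for every $d$ and every such $\mu_1$, meaning $t\mapsto g(t,s_2,\ldots,s_n)$ is a continuous radial CND kernel on all Euclidean spaces. Theorem \ref{reprcondneg} together with $g(0,s_2,\ldots,s_n)=0$ then yields a unique nonnegative $\eta_{s_2,\ldots,s_n}\in\mathfrak{M}([0,\infty))$ with $g(t_1,s_2,\ldots,s_n)=\int(1-e^{-r_1t_1})(1+r_1)/r_1\,d\eta_{s_2,\ldots,s_n}(r_1)$.

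The remaining task is to assemble this parametric family into a single nonnegative Radon measure $\eta$ on $[0,\infty)^n$. I would proceed by induction on $n$, with Theorem \ref{reprcondneg} as the base case $n=1$: testing (i) against richer product measures in the last $n-1$ slots shows that, after pairing with any bounded continuous function $\varphi$ of $r_1$, the scalar function $(s_2,\ldots,s_n)\mapsto\int\varphi(r_1)\,d\eta_{s_2,\ldots,s_n}(r_1)$ inherits a PDI property in the remaining $n-1$ variables, so the inductive hypothesis supplies a joint representation there. The uniqueness in Theorem \ref{reprcondneg} ensures that reductions performed by singling out different coordinates produce compatible measures, and a measure-valued radial kernel argument in the style of \cite{Guella2020} glues these compatible pieces into the desired product measure $\eta$ on $[0,\infty)^n$, establishing (iv). The main obstacle is precisely this assembly step: verifying joint measurability of the family $\{\eta_{s_2,\ldots,s_n}\}$, controlling tails via the two-sided estimates \eqref{bern1ineq} and \eqref{bern1ineq2}, and upgrading a consistent family of marginal Radon measures into a genuine Radon measure on $[0,\infty)^n$.
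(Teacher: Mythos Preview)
Your proposal is correct and follows essentially the same strategy as the paper: the easy implications are identical, and the hard direction $(i)\Rightarrow(iv)$ is handled by induction on $n$, peeling off one coordinate, invoking uniqueness of the one-variable Schoenberg/Bernstein representation to transfer the inequality to the representing measures, and then gluing the resulting parametric family into a single Radon measure via a bimeasure argument (the paper cites Theorem~1.10 in \cite{Berg1984} for the last step).

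The only substantive difference is the order in which the induction is organized. The paper freezes the \emph{last} coordinate $s\in[0,\infty)$, applies the inductive hypothesis in the remaining $n$ variables to obtain $\eta_s\in\mathfrak{M}([0,\infty)^n)$, and then shows that $s\mapsto\eta_s(A)$ is a one-variable CND radial kernel for each Borel $A$; continuity is then immediate from the classical fact that a CND kernel is continuous once it is continuous on the diagonal, together with the estimate $0\le\eta_s(A)\le\eta_s([0,\infty)^n)\le g(1,\ldots,1,s)$. Your ordering instead freezes $n-1$ coordinates and applies Schoenberg first, which produces a family $\eta_{s_2,\ldots,s_n}\in\mathfrak{M}([0,\infty))$; to invoke the inductive hypothesis you must then verify that $(s_2,\ldots,s_n)\mapsto\eta_{s_2,\ldots,s_n}(A)$ is \emph{continuous} on $[0,\infty)^{n-1}$, not just continuous at the boundary. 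This is exactly the ``main obstacle'' you flag. It can be resolved (the $(n-1)$-variable analogue of the diagonal-continuity trick is available by induction, cf.\ Corollary~\ref{continuity0}), but the paper's ordering sidesteps the issue entirely by keeping the continuity check one-dimensional at every stage. That is the only practical advantage of the paper's arrangement; conceptually the two arguments are the same.
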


 \begin{proof} The equivalence between $(iv)$ and $(v)$ is proved in Theorem \ref{bernssevndim}.\\ 
 	Relation $(iv)$ implies relation $(iii)$ because for any $\mu \in \mathcal{M}_{n}((\mathbb{R}^{d})_{n})$
 	\begin{align*}
 		\int_{(\mathbb{R}^{d})_{n}}&\int_{(\mathbb{R}^{d})_{n}} (-1)^{n}\left [\int_{[0,\infty)^{n}}\prod_{i=1}^{n}(1-e^{-r_{i}\|x_{i} - y_{i}\|^{2}})\frac{1 +r_{i}}{r_{i}}d\eta(r) \right ] d\mu(x)d\mu(y)\\
 		&=\int_{[0,\infty)^{n}} \left [\int_{(\mathbb{R}^{d})_{n}}\int_{(\mathbb{R}^{d})_{n}}(-1)^{n} \prod_{i=1}^{n}(1-e^{-r_{i}\|x_{i} - y_{i}\|^{2}})\frac{1 +r_{i}}{r_{i}} d\mu(x)d\mu(y)  \right ]d\eta(r)\geq 0 \\
 	\end{align*}
 	as the  Kronecker product of $n$ Bernstein functions is PDI$_{n}^{\infty}$.\\
 	Relation $(iii)$ implies relations $(ii)$ and $(ii^{\prime})$ because by the first assertion in Lemma \ref{exm02xn}, both $\Sigma[P]$ and $\Lambda[P]$ are elements of $\mathcal{M}_{n}((\mathbb{R}^{d})_{n})$. \\
 	Both relations $(ii)$ and $(ii^{\prime})$  implies relation $(i)$ by the second assertion in Lemma \ref{exm02xn}.\\ 
 	To conclude, we prove that relation $(i)$ implies relation $(iv)$, and the proof is done by induction in $n$. When $n=1$, relations $(i)$, $(ii)$, $(ii^{\prime})$ and $(iii)$ are the same as relation $(i)$  in Theorem \ref{reprcondneg}, the same occurs for relation $(iv)$ and $(v)$ which are respectively relation $(ii)$ and $(iii)$  in Theorem \ref{reprcondneg}.\\
 	Now, we assume that the $6$ equivalences in Theorem \ref{basicradialndim} holds for an arbitrary $n\in \mathbb{N}$ and we prove for $n+1$ variables.\\
 	For any $d\in \mathbb{N}$ and  $s \in [0, \infty)$ it holds that the function
 	$$
 	t \in [0, \infty)^{n} \to g(t,s)
 	$$
 	is continuous, is zero when $t \in \partial_{n-1}^{n}$ and satisfies relation $(i)$ of Theorem \ref{basicradialndim} with $n$ variables,  by taking $\mu_{n+1}:= (\delta_{x_{n+1}} - \delta_{\vec{0}})/2$, where $\|x_{n+1}\|^{2}= s$. Thus, there exists an unique  nonnegative measure  $\eta_{s} \in \mathfrak{M}([0, \infty)^{n})$ for which
 	$$
 	g(t,s)= \int_{[0,\infty)^{n}}\prod_{i=1}^{n}(1-e^{-r_{i}t_{i}})\frac{1 +r_{i}}{r_{i}}d\eta_{s}(r). 
 	$$
 	We affirm that  the function 
 	$$
 	s \in [0, \infty) \to \eta_{s}(A) \in \mathbb{R}
 	$$ 
 	is continuous and  CND for every $d \in \mathbb{N}$ and   every $A \in \mathscr{B}([0, \infty)^{n})$.\\
 	To prove that the  radial kernel is CND in all Euclidean spaces, let $d \in \mathbb{N}$,  $u_{1}, \ldots , u_{l} \in \mathbb{R}^{d}$ and scalars $c_{1}, \ldots , c_{l} \in \mathbb{R} $ with the restriction that $\sum_{\theta=1}^{l}c_{\theta}=0$, by  choosing $\lambda :=\sum_{\theta =1}^{l}c_{\theta}\delta_{u_{\theta}}$, we also obtain that the function
 	$$
 	t \in [0, \infty)^{n} \to   -\sum_{\theta,\vartheta=1}^{l}c_{\theta}c_{\vartheta}g(t, \|u_{\theta}- u_{\vartheta}\|^{2}),
 	$$   
 	satisfies relation $(i)$ of Theorem \ref{basicradialndim} with $n$ variables. Since the representation in Theorem \ref{basicradialndim} is unique, we obtain that 
 	$$
 	\sum_{\theta,\vartheta=1}^{l}c_{\theta}c_{\vartheta}\eta_{\|u_{\theta}- u_{\vartheta}\|^{2}} 
 	$$
 	is a nonpositive measure, which proves our first claim.\\
 	It is continuous because by the comment made after Equation \ref{schoenmetriccond}, it is sufficient to prove continuity in the diagonal, which occurs because by  Equation \ref{bern1ineq} 
 	\begin{align*}
 		0 &\leq  \eta_{\|u-v\|^{2}}(A) \leq \eta_{\|u-v\|^{2}}([0, \infty)^{n} )\leq \int_{[0, \infty)^{n}} \prod_{i=1}^{n}(1-e^{-r_{i}})\frac{1 +r_{i}}{r_{i}}d\eta_{\|u-v\|^{2}}(r)\\
 		& = g( 1, \ldots , 1, \|u-v\|^{2}),
 	\end{align*}
 	and  $g(1, \ldots , 1, 0)=0$.
 	Note also that  $g(t,0)$ is the zero function,  and because the representation is unique,  $\eta_{0}$ is the zero measure.\\
 	From those relations, by Theorem \ref{reprcondneg}, we have that 
 	$$
 	\eta_{s}(A)=\int_{[0,\infty)}(1-e^{-r_{n+1}s})\frac{1 +r_{n+1}}{r_{n+1}}d\eta_{A}(r_{n+1})
 	$$
 	where  $\eta_{A}$ is a nonnegative measure in $\mathfrak{M}([0, \infty))$. Note that  $\eta_{\emptyset}$ is the zero measure. Since  $\eta_{s}$ is a measure we have that if $(A_{n})_{n \in \mathbb{N}}$ is a disjoint sequence of Borel measurable sets in $[0, \infty)^{n}$, then
 	\begin{align*}
 		& \int_{[0,\infty)}(1-e^{-sr_{n+1}})\frac{1 +r_{n+1}}{r_{n+1}}d\eta_{\cup_{k \in \mathbb{N}}A_{k}}(r_{n+1})= \eta_{s}(\cup_{k \in \mathbb{N}}A_{k}) \\
 		&= \sum_{k \in \mathbb{N}}\eta_{s}(A_{k})=\sum_{k \in \mathbb{N}}\int_{[0,\infty)}(1-e^{-sr_{n+1}})\frac{1 +r_{n+1}}{r_{n+1}}d\eta_{A_{k}}(r_{n+1})\\
 		&= \int_{[0,\infty)}(1-e^{-sr_{n+1}})\frac{1 +r_{n+1}}{r_{n+1}}d\left [\sum_{k \in \mathbb{N}} \eta_{A_{k}}\right ](r_{n+1}),
 	\end{align*}
 	since the representation is unique we obtain that $
 	\eta_{\cup_{n \in \mathbb{N}}A_{n}} =\sum_{n \in \mathbb{N}} \eta_{A_{n}}$.  Hence, the function $A \times B \to \eta_{A}(B)$ is a nonnegative bimeasure, which by Theorem $1.10$ in \cite{Berg1984} there exists a nonnegative measure $\eta \in \mathfrak{M}([0,\infty)^{n}\times [0,\infty))$ such that $\eta(A\times B) = \eta_{A}(B)$, for every $A \in \mathscr{B}([0,\infty)^{n})$ and  $B \in \mathscr{B}([0,\infty))$. \\
 	Gathering all this information we obtain that
 	\begin{align*}
 		g( t_{1}&, \ldots , t_{n}, t_{n+1})=  \int_{[0,\infty)^{n}}\prod_{i=1}^{n}(1-e^{-r_{i}t_{i}})\frac{1 +r_{i}}{r_{i}}d\eta_{t_{n+1}}(r_{1}, \ldots, r_{n})\\
 		&=\int_{[0,\infty)^{n+1}}\left [\prod_{i=1}^{n}(1-e^{-r_{i}t_{i}})\frac{1 +r_{i}}{r_{i}}\right ](1-e^{-t_{n+1}r_{n+1}})\frac{1 +r_{n+1}}{r_{n+1}}d\eta(r_{1}, \ldots, r_{n}, r_{n+1})\\
 		&=\int_{[0,\infty)^{n+1}}\prod_{i=1}^{n+1}(1-e^{-r_{i}t_{i}})\frac{1 +r_{i}}{r_{i}}d\eta(r), 
 	\end{align*}
 	which concludes that $(i)$ implies $(iv)$.\end{proof}

 From the following simple inequality				
 \begin{equation}\label{ineqexp}
 	(1-e^{-sa})\leq  \max \left (1, \frac{a}{b} \right ) (1-e^{-sb}), \quad s \in [0, \infty), \quad a,b> 0, 
 \end{equation}
 we obtain that for any function $g$ that satisfies Theorem \ref{basicradialndim}
 \begin{equation}\label{consineqexp}
 	g(t_{\vec{1}})\leq  \left [\prod_{i=1}^{n} \max (1, t^{1}_{i}/t^{2}_{i} )\right ]g(t_{\vec{2}}), \quad t_{\vec{1}}, t_{\vec{2}} \in (0, \infty)^{n}, 	
 \end{equation}
 that $g$ is increasing in the sense that  $g(t_{\vec{2}}) \geq  g(t_{\vec{1}}) $ if $t_{\vec{2}} - t_{\vec{1}} \in [0, \infty)^{n}$ and that 
 \begin{equation}\label{consineqexp2}
 	g(t)\leq  g(\vec{1})\prod_{i=1}^{n} (1+ t_{i} ), \quad t    \in [0, \infty)^{n}. 	
 \end{equation}	
 Since 
 $$
 \frac{(1-e^{-s(a+b)})}{s}\leq \frac{(1-e^{-sa})}{s} +\frac{(1-e^{-sb})}{s}, \quad a,b,s \in [0, \infty)
 $$
 we obtain that for every $t_{\vec{1}}, t_{\vec{2}} \in [0, \infty)^{n}$
 \begin{equation}\label{convexexp}
 	g(t_{\vec{1}} + t_{\vec{2}}) \leq  \sum_{\alpha \in \mathbb{N}_{2}^{n}}g(t_{\alpha}).  	
 \end{equation}

 We conclude this Section with two results in which the case $n=1$ is shown in the proof of Theorem \ref{basicradialndim}, and the general case  will simplify the proof  for the characterization of   PDI$_{k,n}^{\infty}$ functions in Theorem \ref{bernsksevndimpart3}.
 
 \begin{corollary}\label{continuity0} Let $g: [0, \infty)^{n} \to \mathbb{R}$ be an PDI$_{ n}^{\infty}$  function such that $g(t)=0$ for every $t \in \partial_{n-1}^{n}$. Then, $g$ is continuous if and only if $g$ is continuous on $\partial_{n-1}^{n}$, that is, for any  $s \in \partial_{n-1}^{n}$ we must have that $\lim_{t \to s}g(t)=g(s)=0$. 
 \end{corollary}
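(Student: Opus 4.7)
The nontrivial implication is that boundary continuity forces full continuity, and the plan is to prove this by induction on $n$, designed so that the argument of Theorem~\ref{basicradialndim} can be invoked on coordinate slices.

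For the base case $n=1$, the hypothesis $g(0)=0$ together with the PDI$_1^\infty$ property is precisely CND-ness of the kernel $\gamma(x,y):=g(\|x-y\|^2)$ on every $\mathbb{R}^d$, with $\gamma(x,x)=0$. Continuity of $g$ at $0$ is exactly continuity of $\gamma$ on the diagonal, and by the classical equivalence recalled just after Equation~\ref{schoenmetriccond}, this upgrades to continuity of $\gamma$, hence of $g$.

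For the inductive step, assuming the corollary in dimension $n-1$, the first task is to verify that for each $s>0$ the slice $g_s(t):=g(t,s)$ is PDI$_{n-1}^\infty$. I would test the PDI$_n^\infty$ inequality against the product measure $\mu' \times (\delta_{x_n}-\delta_{y_n})/2$ with $\|x_n-y_n\|^2=s$ and $\mu'\in\mathcal{M}_{n-1}((\mathbb{R}^d)_{n-1})$: the inner integration over the $n$-th factor collapses, thanks to $g|_{\partial_{n-1}^n}=0$, to $-g_s(\ldots)/2$, which after tracking the sign $(-1)^n$ yields exactly $(-1)^{n-1}\int g_s\,d\mu'\,d\mu'\ge 0$. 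Since $g_s$ also vanishes on $\partial_{n-2}^{n-1}$ and is continuous there (as $(t,s)\in\partial_{n-1}^n$ whenever $t\in\partial_{n-2}^{n-1}$, where $g$ is assumed continuous), the inductive hypothesis of the corollary yields continuity of $g_s$ on $[0,\infty)^{n-1}$.

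With continuity of every slice $g_s$ in hand, Theorem~\ref{basicradialndim} applied in dimension $n-1$ produces a unique nonnegative $\eta_s\in\mathfrak{M}([0,\infty)^{n-1})$ representing $g_s$, and from here one may reproduce verbatim the remaining steps of the proof of Theorem~\ref{basicradialndim}: uniqueness of $\eta_s$ promotes $s\mapsto\eta_s(A)$ to a CND function in all Euclidean spaces, the bound $0\le \eta_{\|u-v\|^2}(A)\le g(\vec{1},\|u-v\|^2)$ derived from Equation~\ref{bern1ineq} delivers continuity on the diagonal (this is the one and only place where boundary continuity of $g$ itself is invoked, via $g(\vec{1},s)\to 0$), Theorem~\ref{reprcondneg} together with a bimeasure identification (Theorem $1.10$ in \cite{Berg1984}) provide the full integral representation of $g$, and continuity of $g$ is then immediate from dominated convergence on that representation. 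The most delicate point is really the first step above --- carefully tracking the sign cancellations so that collapsing the $n$-th factor of the test measure yields the correct PDI$_{n-1}^\infty$ inequality for $g_s$; after that the argument is a faithful recycling of the existing machinery.
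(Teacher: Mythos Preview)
Your proposal is correct and follows essentially the same route as the paper's proof: induction on $n$ with the base case handled via the CND characterization (the comment after Equation~\ref{schoenmetriccond}), and the inductive step obtained by showing that each slice $g_s$ satisfies the lower-dimensional hypotheses of the Corollary and then rerunning the machinery of Theorem~\ref{basicradialndim}. The only minor inaccuracy is the parenthetical claim that the bound $\eta_{\|u-v\|^2}(A)\le g(\vec{1},\|u-v\|^2)$ is ``the one and only place'' where boundary continuity of $g$ is used---you had already invoked it once before, when verifying that $g_s$ is continuous on $\partial_{n-2}^{n-1}$.
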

 
 \begin{proof}The proof is done by induction, where the case $n=1$ is a direct consequence of the comment made after Equation \ref{schoenmetriccond}.\\
 	Now, suppose that the result is valid for $n$ variables and we prove for $n+1$. Similar to the proof of Theorem \ref{basicradialndim}, for any fixed $s \in [0, \infty)$ the function
 	$$
 	t=(t_{1}, \ldots, t_{n}) \in [0, \infty)^{n}\to g(t,s) \in \mathbb{R}
 	$$
 	satisfies the restrictions of the statement of the Corollary with $n$ variables, thus it is continuous. If we continue with the same arguments of Theorem \ref{basicradialndim}  we obtain the continuity of $g$.  \end{proof}

 \begin{lemma}\label{techsimple} Let $X$ be a Hausdorff space and $\eta_{t} \in \mathfrak{M}(X)$, $t \in [0, \infty)^{k}$, be a collection of measures such that $t \in [0, \infty)^{k} \to \eta_{t}(X)$ is continuous   and $\eta_{t}$ is the zero measure whenever $t \in \partial_{k-1}^{k}$. Then, for every  $A \in \mathscr{B}(X)$  the function $t \in [0, \infty)^{k} \to \eta_{t}(A)$  defines  an PDI$_{k}^{\infty}$ radial kernel in all Euclidean spaces if, and only if,  there exists a nonnegative measure  $\eta \in \mathfrak{M}(X \times [0, \infty)^{k})$ such that
 	$$
 	\eta_{t}(A)= \int_{A\times [0, \infty)^{k}}\left [\prod_{i=1}^{k}(1-e^{-r_{i}t_{i}})\frac{1 +r_{i}}{r_{i}}\right ]    d\eta(x,r), \quad A \in \mathscr{B}(X).
 	$$
 	Also, for any   bounded measurable function $h: X \to \mathbb{R}$, it holds that
 	$$
 	\int_{X}h(x)d\eta_{t}(x)=  \int_{X\times [0, \infty)^{k}}h(x)\left [\prod_{i=1}^{k}(1-e^{-r_{i}t_{i}})\frac{1 +r_{i}}{r_{i}}\right ]     d\eta(x,r).
 	$$

 \end{lemma}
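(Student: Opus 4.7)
The plan is to abstract the bimeasure construction used at the end of the proof of Theorem \ref{basicradialndim}, where the analogous step was carried out for a concrete base space. I would handle the easier direction first: if the measure $\eta$ exists, setting $\eta_{A}(B) := \eta(A\times B)$ produces a finite nonnegative measure on $[0,\infty)^{k}$ representing $t \mapsto \eta_{t}(A)$ in the form of item $(iv)$ of Theorem \ref{basicradialndim}, and that theorem then makes $t \mapsto \eta_{t}(A)$ an PDI$_{k}^{\infty}$ radial kernel.

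For the forward direction, I would first establish that $t \mapsto \eta_{t}(A)$ is continuous for every Borel $A \subset X$. Since each $\eta_{t}$ is a nonnegative measure, $0 \le \eta_{t}(A) \le \eta_{t}(X)$, and the assumed continuity of $\eta_{t}(X)$ together with $\eta_{s} \equiv 0$ on $\partial_{k-1}^{k}$ forces $\eta_{t}(A) \to 0 = \eta_{s}(A)$ as $t \to s \in \partial_{k-1}^{k}$. Since $t \mapsto \eta_{t}(A)$ is PDI$_{k}^{\infty}$ by hypothesis, Corollary \ref{continuity0} upgrades continuity on $\partial_{k-1}^{k}$ to continuity on all of $[0,\infty)^{k}$. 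Then Theorem \ref{basicradialndim} (equivalence $(iii) \Leftrightarrow (iv)$) applies for each fixed $A$ and supplies a unique nonnegative $\eta_{A} \in \mathfrak{M}([0,\infty)^{k})$ with the desired integral representation of $\eta_{t}(A)$.

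Next I would verify that $A \mapsto \eta_{A}$ is countably additive, so that $(A,B) \mapsto \eta_{A}(B)$ is a nonnegative bimeasure on $\mathscr{B}(X) \times \mathscr{B}([0,\infty)^{k})$. For pairwise disjoint $A_{1},A_{2},\ldots \in \mathscr{B}(X)$, the $\sigma$-additivity of each $\eta_{t}$ yields $\eta_{t}(\bigcup_{n} A_{n}) = \sum_{n} \eta_{t}(A_{n})$; by monotone convergence the measure $\sum_{n} \eta_{A_{n}}$ represents the left-hand side in the same Laplace-type integral form, and the uniqueness clause in Theorem \ref{basicradialndim} forces $\eta_{\bigcup_{n} A_{n}} = \sum_{n} \eta_{A_{n}}$. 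With the bimeasure in hand I would invoke Theorem $1.10$ of \cite{Berg1984} — already used in the proof of Theorem \ref{basicradialndim} — to produce a nonnegative measure $\eta$ on $X \times [0,\infty)^{k}$ with $\eta(A \times B) = \eta_{A}(B)$; finiteness of $\eta$ is immediate from $\eta(X \times [0,\infty)^{k}) = \eta_{X}([0,\infty)^{k}) < \infty$.

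Finally, the stated identity for bounded measurable $h$ follows from the indicator case by linearity on simple functions and dominated convergence for general bounded $h$, both sides being finite by the boundedness of $h$ together with the finiteness of $\eta$ and $\eta_{t}$. I expect the main obstacle to be the $\sigma$-additivity step in paragraph three: one must interchange an infinite sum with the kernel integral and then invoke uniqueness to identify the two representing measures — routine but essential, since without it Theorem $1.10$ of \cite{Berg1984} cannot be brought to bear. The remainder is a direct transcription of the closing argument in the proof of Theorem \ref{basicradialndim}.
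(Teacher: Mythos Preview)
Your proposal is correct and follows essentially the same route as the paper's proof, which likewise establishes continuity of $t\mapsto\eta_t(A)$ via the bound $0\le\eta_t(A)\le\eta_t(X)$ together with Corollary~\ref{continuity0}, applies Theorem~\ref{basicradialndim} to produce the unique $\eta_A$, and then defers the bimeasure-to-measure passage to the identical construction already carried out at the end of the proof of Theorem~\ref{basicradialndim}. The one point worth making explicit is that nonnegativity of each $\eta_t$ is not a hypothesis but a consequence of the PDI$_k^\infty$ assumption together with the boundary vanishing (cf.\ the remark after Lemma~\ref{PDInsimpli}); the paper records this as a first observation in the forward direction, and you should too rather than writing ``since each $\eta_t$ is a nonnegative measure'' as if it were given.
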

 
 \begin{proof} If there exists the  nonnegative measure  $\eta \in \mathfrak{M}(X \times [0, \infty)^{k})$ for which the equation is satisfied, then for every $d \in \mathbb{R}^{d}$ and $\mu \in \mathcal{M}_{k}((\mathbb{R}^{d})_{k}) $ we have that for any $A \in \mathscr{B}(X)$
 	\begin{align*}
 		&	\int_{(\mathbb{R}^{d})_{k}} \int_{(\mathbb{R}^{d})_{k}} (-1)^{k}\eta_{\|x_{1} - y_{1}\|^{2}, \ldots, \|x_{k} - y_{k}\|^{2}}(A)d\mu(x)d\mu(y)\\
 		& = \int_{A\times [0, \infty)^{k}} \left [	\int_{(\mathbb{R}^{d})_{k}} \int_{(\mathbb{R}^{d})_{k}}(-1)^{k}\prod_{i=1}^{k}( 1-e^{-\|x_{i} - y_{i}\|^{2}r_{i}})\frac{1 +r_{i}}{r_{i}}d\mu(x)d\mu(y) \right ]   d\eta(x,r) \geq 0.
 	\end{align*}	
 	For the converse, note that the measure $\eta_{t}$ is nonnegative for every $t \in [0, \infty)^{k}$. \\ 
 	The function $t \in [0, \infty)^{k} \to \eta_{t}(A)$  is continuous for every $A \in \mathscr{B}(X)$. Indeed, since  $0 \leq \eta_{t}(A)\leq \eta_{t}(X)$, the continuity of  $t \in [0, \infty)^{k} \to \eta_{t}(X)$ implies that if  $t_{\vec{1}} \to t_{\vec{2}}$ and $t_{\vec{2}}$ is an element of $\partial_{k-1}^{k}$, then $\eta_{t_{\vec{1}}}(A) \to \eta_{t_{\vec{2}}}(A)=0$, Lemma \ref{continuity0} concludes the continuity of $\eta_{t}(A)$. \\
 	By Theorem \ref{basicradialndim}, we have that  for any $A \in \mathscr{B}(X)$ there exists an unique nonnegative measure $\eta_{A}$ in $\mathfrak{M}([0, \infty)^{k})$ such that
 	$$
 	\eta_{t}(A)=\int_{[0,\infty)^{k}}\left [\prod_{i=1}^{k}(1-e^{-r_{i}t_{i}})\frac{1 +r_{i}}{r_{i}}\right ]    d\eta_{A}(r), \quad t \in [0, \infty)^{k}.
 	$$
 	The remaining arguments are the same as the ones presented in the final steps of the proof of Theorem \ref{bernssevndim}, and thus  omitted.\end{proof}

 \section{PD  radial kernels in several variables}\label{PDseveralvariablessec}
 
 In this short Section, we provide a  characterization of the positive definite  radial kernels with  $n$ variables in a Kronecker product of  Euclidean spaces  $\mathbb{R}^{d_{1}} \times \ldots \times\mathbb{R}^{d_{n}}$ for any  $d=(d_{1}, \ldots, d_{n}) \in \mathbb{N}^{n}$. The case where $d$  is fixed can be found in \cite{fernandez2003flexible} and also in  \cite{AlonsoMalaver2015}.

 We do not claim that such characterization is new in the literature, however,  we present a proof for it   because relation $(i)$ in Theorem \ref{PDseveralvariables} is a   crucial part of the proof of the radial PDI$_{k, n}^{\infty}$  functions  obtained in Theorem \ref{bernsksevndimpart3}, and we do not expect it to be explicit stated elsewhere.

 \begin{theorem}\label{PDseveralvariables} Let $g : [0, \infty)^{n} \to \mathbb{R}$ be a continuous function. The following conditions are equivalent:
 	\begin{enumerate}
 		\item [$(i)$] For any $d\in \mathbb{N}$ and discrete  measures  $\mu_{i}$ in $\mathbb{R}^{d}$, $1\leq i \leq n$,  it holds that
 		$$
 		\int_{(\mathbb{R}^{d})_{n}}\int_{ (\mathbb{R}^{d})_{n}}g(\|x_{1}-y_{1}\|^{2}, \ldots, \|x_{n} - y_{n}\|^{2})d[\bigtimes_{i=1}^{n}\mu_{i}](x)d[ \bigtimes_{i=1}^{n}\mu_{i}](y)\geq 0.
 		$$
 		\item [$(ii)$]  The kernel  	
 		$$
 		g(\|x_{1}-y_{1}\|^{2}, \ldots, \|x_{n} - y_{n}\|^{2}), \quad x_{i},y_{i} \in  \mathbb{R}^{d}
 		$$ is PD  for every  $d\in \mathbb{N}$.
 		\item [$(iii)$]  The function  can  be represented as
 		\begin{align*}
 			g(t)  = \int_{[0, \infty)^{n}}  e^{-r\cdot t}d\eta(r_{1},\ldots,  r_{n})
 		\end{align*}
 		where  the measure $\eta \in \mathfrak{M}([0,\infty)^{n}$ is nonnegative. Further, the representation is unique.
 		\item[$(iv)$]The function $g$  is completely monotone in $(0, \infty)^{n}$.

 	\end{enumerate}	
 \end{theorem}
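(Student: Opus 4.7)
The implication flow would go $(iv) \Leftrightarrow (iii) \Rightarrow (ii) \Rightarrow (i)$, with the main content being $(i) \Rightarrow (iii)$, which I plan to prove by induction on $n$. The equivalence $(iii) \Leftrightarrow (iv)$ is Theorem \ref{Bochnercomplsevndim} applied on the open orthant $(0,\infty)^{n}$ and extended to $[0,\infty)^{n}$ by continuity of $g$. For $(iii) \Rightarrow (ii)$, each factor $e^{-r_{i}\|x_{i}-y_{i}\|^{2}}$ is a Gaussian radial kernel (PD on every $\mathbb{R}^{d}$ by Schoenberg), and products together with nonnegative integrals preserve PD on the Cartesian product. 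The implication $(ii) \Rightarrow (i)$ is immediate, since product measures $\bigtimes_{i}\mu_{i}$ are admissible coefficients in the PD inequality.

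For the base case $n=1$ of $(i) \Rightarrow (iii)$, condition $(i)$ says exactly that $g(\|x-y\|^{2})$ is PD on every $\mathbb{R}^{d}$, and the classical Schoenberg theorem delivers the Laplace representation. For the inductive step I take $g$ in $n+1$ variables satisfying $(i)$ and, for each discrete signed measure $\lambda$ on $\mathbb{R}^{d}$, introduce the radial quadratic reduction
\[
G_{\lambda}(t) := \int\!\!\int g(t, \|u-v\|^{2})\, d\lambda(u)\, d\lambda(v).
\]
Applying $(i)$ in $n+1$ variables with $\mu_{n+1}=\lambda$ and arbitrary $\mu_{1},\ldots,\mu_{n}$ shows that $G_{\lambda}$ is continuous and satisfies $(i)$ in $n$ variables, so the inductive hypothesis supplies a unique nonnegative $\eta_{\lambda}$ on $[0,\infty)^{n}$ with $G_{\lambda}(t)=\int e^{-r\cdot t}\, d\eta_{\lambda}(r)$. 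Polarizing $\lambda \mapsto \eta_{\lambda}$, I define the signed bilinear measures $\eta_{\lambda_{1},\lambda_{2}}:=\tfrac{1}{4}[\eta_{\lambda_{1}+\lambda_{2}}-\eta_{\lambda_{1}-\lambda_{2}}]$ and set $\kappa_{s}:=\eta_{\delta_{u},\delta_{v}}$ for any pair $u,v$ with $\|u-v\|^{2}=s$. Radial symmetry of $g$ makes $\kappa_{s}$ independent of the choice of $u,v$, and by construction $g(t,s)=\int e^{-r\cdot t}\, d\kappa_{s}(r)$.

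By bilinearity, $\sum_{k,l} c_{k}c_{l}\,\kappa_{\|z_{k}-z_{l}\|^{2}}(A) = \eta_{\sum_{k}c_{k}\delta_{z_{k}}}(A) \geq 0$ for every Borel $A \subset [0,\infty)^{n}$, hence $(u,v) \mapsto \kappa_{\|u-v\|^{2}}(A)$ is PD on every $\mathbb{R}^{d}$. Granted continuity of $s \mapsto \kappa_{s}(A)$, Schoenberg's classical PD radial theorem yields a unique $\tau_{A}\geq 0$ with $\kappa_{s}(A) = \int_{[0,\infty)} e^{-r_{n+1}s}\, d\tau_{A}(r_{n+1})$, and in particular $\kappa_{s}\geq 0$. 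Countable additivity of $A \mapsto \tau_{A}(B)$ follows from $\sigma$-additivity of $A \mapsto \kappa_{s}(A)$ together with uniqueness of the one-dimensional Schoenberg representation, so $(A,B) \mapsto \tau_{A}(B)$ is a nonnegative bimeasure; Theorem $1.10$ of \cite{Berg1984} (as used in Lemma \ref{techsimple}) then produces a nonnegative Borel measure $\tau$ on $[0,\infty)^{n+1}$ with $\tau(A\times B)=\tau_{A}(B)$. Fubini delivers
\[
g(t,s) = \int_{[0,\infty)^{n}} e^{-r\cdot t}\, d\kappa_{s}(r) = \int_{[0,\infty)^{n+1}} e^{-r\cdot t - r_{n+1}s}\, d\tau(r, r_{n+1}),
\]
closing the induction.

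The main technical obstacle is the continuity of $s \mapsto \kappa_{s}(A)$, since $\kappa_{s}$ is a signed measure obtained by polarization and continuity is not automatic. The intended resolution uses that $\int e^{-r\cdot t}\, d\kappa_{s}(r) = g(t,s)$ is continuous in $s$ for each fixed $t$; Stone--Weierstrass density of the algebra $\{e^{-r\cdot t}\}_{t \in (0,\infty)^{n}}$ in $C_{0}([0,\infty)^{n})$ upgrades this to vague continuity of $s\mapsto \kappa_{s}$, and a uniform total-variation bound of the form $|\kappa_{s}|([0,\infty)^{n}) \leq g(\vec{0},0)$ (derived from $G_{\delta_{u}\pm\delta_{v}}(\vec{0})=2g(\vec{0},0)\pm 2g(\vec{0},s)$) supplies the uniform control needed to pass from vague to pointwise continuity of $s\mapsto \kappa_{s}(A)$ and invoke Schoenberg.
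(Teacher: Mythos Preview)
Your overall architecture matches the paper's exactly: the easy implications $(iv)\Leftrightarrow(iii)\Rightarrow(ii)\Rightarrow(i)$, and then $(i)\Rightarrow(iii)$ by induction, freezing the last variable, applying the $n$-variable hypothesis to get a family of measures $\kappa_{s}$ on $[0,\infty)^{n}$, showing that $(u,v)\mapsto\kappa_{\|u-v\|^{2}}(A)$ is PD on every $\mathbb{R}^{d}$, applying one-dimensional Schoenberg to each $A$, and assembling via the bimeasure theorem. The polarization detour is unnecessary (the paper just writes $\eta_{s}$ directly as the difference of the nonnegative representations of $g(t,0)$ and $g(t,0)-g(t,s)$), but it is not wrong.

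The genuine gap is your continuity argument. Vague convergence of $(\kappa_{s})$ together with a uniform total-variation bound does \emph{not} imply $\kappa_{s_{m}}(A)\to\kappa_{s}(A)$ for an arbitrary Borel set $A$; at best you get convergence on $\eta$-continuity sets. A concrete obstruction: measures $\delta_{1/m}\to\delta_{0}$ vaguely with unit total variation, yet $\delta_{1/m}(\{0\})=0\not\to 1=\delta_{0}(\{0\})$, and the kernel $(u,v)\mapsto\mathbf{1}_{u=v}$ is still PD, so the PD hypothesis alone does not rescue you. Since Schoenberg's theorem for PD radial functions requires continuity of $s\mapsto\kappa_{s}(A)$, this step cannot be closed as written.

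The paper's remedy is a monotonicity estimate you already have the ingredients for. Taking $\mu_{n+1}=(\delta_{u}-\delta_{v})/2$ shows that $t\mapsto g(t,0)-g(t,s)$ satisfies $(i)$ in $n$ variables, so its representing measure $\eta_{0}-\eta_{s}$ is \emph{nonnegative}. Hence
\[
0\;\le\;\eta_{0}(A)-\eta_{s}(A)\;\le\;(\eta_{0}-\eta_{s})\bigl([0,\infty)^{n}\bigr)\;=\;g(\vec{0},0)-g(\vec{0},s),
\]
which gives continuity at $s=0$ directly from the continuity of $g$. Since $(u,v)\mapsto\eta_{\|u-v\|^{2}}(A)$ is PD, the kernel $\eta_{0}(A)-\eta_{\|u-v\|^{2}}(A)$ is CND with zero diagonal, and Equation~\eqref{condequa} upgrades continuity at the diagonal to continuity everywhere. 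Replace your Stone--Weierstrass paragraph with this two-line estimate and the proof goes through.
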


 \begin{proof}  Relation $(iii)$ implies relation $(iv)$ by Theorem  \ref{Bochnercomplsevndim}.\\
 	Relation $(iv)$ implies relation $(iii)$, because by Theorem \ref{Bochnercomplsevndim} the equality holds for $t \in (0,\infty)^{n}$, using the Monotone Convergence Theorem and the continuity of $g$, we obtain the other cases, including $t=\vec{0}$, which implies that $\eta$ is finite. \\
 	Relation $(iii)$ implies relation $(ii)$ by direct verification. \\
 	Relation $(i)$ is a special case of relation $(ii)$.\\
 	To conclude, we prove that relation $(i)$ implies relation $(iii)$ by induction on $n$, where the case initial case is  the classical result due to Schoenberg and  proved in \cite{schoenbradial}.\\
 	Indeed, let $s \in [0, \infty)$, be a fixed number and let $x_{n+1}$ and $y_{n+1} $ be arbitrary vectors in $ \mathbb{R}^{d}$ such that $\|x_{n+1} - y_{n+1}\|^{2}=s$. The functions
 	$$
 	(t_{1}, \ldots, t_{n}) \to -g(t_{1}, \ldots, t_{n}, s ) + g(t_{1}, \ldots, t_{n},0 ) \in \mathbb{R}, \quad  (t_{1}, \ldots, t_{n})  \to g(t_{1}, \ldots, t_{n}, 0 ) \in \mathbb{R}
 	$$
 	satisfy relation $(i)$ with $n$ variables, because we took respectively $\mu_{n+1} $ equals to $(\delta_{x_{n+1}} - \delta_{y_{n+1}})/2$ and $\delta_{x_{n+1}}$.
 	Hence, those two functions admits an unique  integral representation of relation  $(iii)$ with $n$ variables, so   there exists an unique  measure $\eta_{s} \in \mathfrak{M}([0,\infty)^{n})$ such that
 	$$
 	g(t_{1}, \ldots, t_{n}, s )= \int_{[0, \infty)^{n}}  e^{-r\cdot t}d\eta_{s}(r_{1},\ldots,  r_{n}).
 	$$
 	Now, pick an arbitrary discrete measure $\mu_{n+1}:= \sum_{i=1}^{m}c_{i}z_{n+1}^{i}   $
 	in $\mathbb{R}^{d}$, by the same argument  as before, the function
 	$$
 	(t_{1}, \ldots, t_{n}) \to \sum_{i,j=1}^{m}c_{i}c_{j}g(t_{1}, \ldots, t_{n}, \|z_{n+1}^{i} - z_{n+1}^{j}\|^{2} )\in \mathbb{R},
 	$$
 	satisfy relation $(i)$ with $n$ variables, and since the integral representation in relation  $(iii)$ with $n$ variables is unique, we get that the following   measure in $\mathfrak{M}([0,\infty)^{n})$
 	\begin{equation}\label{PDseveralvariableseq2}
 		\sum_{i,j=1}^{m}c_{i}c_{j}\eta_{\|z_{n+1}^{i} - z_{n+1}^{j}\|^{2}} \quad  \text { is always nonnegative.}
 	\end{equation}
 	Now, we prove that for any $A \in \mathscr{B}([0, \infty)^{n})$, the function
 	$$
 	s\in [0, \infty) \to \eta_{s}(A),
 	$$
 	is continuous. Indeed, as it defines a positive definite radial kernel in all Euclidean spaces, by the comment made after Equation   \ref{schoenmetriccond}  (because $-\eta_{\|x_{n+1} - y_{n+1}\|^{2}}(A) + \eta_{0}(A)$ is an CND kernel that is zero in the diagonal), to prove it is continuous it is sufficient to prove the continuity at $s=0$, and for that we note that  by Equation \ref{PDseveralvariableseq2} the measure  $\eta_{0} - \eta_{s}$ is   nonnegative, and then
 	$$
 	0\leq \eta_{0}(A) - \eta_{s}(A) \leq \eta_{0}([0, \infty)^{n}) - \eta_{s}([0, \infty)^{n}) = g( 0, \ldots , 0, 0) - g( 0, \ldots , 0, s).
 	$$
 	By  the Schoenberg characterization  in \cite{schoenbradial}, we have that  for any $A \in \mathscr{B}([0, \infty)^{n})$ there is an unique nonnegative measure $\eta_{A}$ in $\mathfrak{M}([0, \infty))$ such that
 	$$
 	\eta_{s}(A)=\int_{[0,\infty)}e^{-r_{n+1}s}d\eta_{A}(r_{n+1}).
 	$$
 	Note that  $\eta_{\emptyset}$ is the zero measure. Since  $\eta_{s}$ is a measure we have that if $(A_{k})_{k \in \mathbb{N}}$ is a disjoint sequence of Borel measurable sets in $[0, \infty)^{n}$, then
 	\begin{align*}
 		& \int_{[0,\infty)^{n}}e^{-r_{n+1}s}d\eta_{(\cup_{k \in \mathbb{N}}A_{k})}(r_{n+1})= \eta_{s}(\cup_{n \in \mathbb{N}}A_{n}) \\
 		&= \sum_{k \in \mathbb{N}}\eta_{s}(A_{k})=\sum_{k \in \mathbb{N}}\int_{[0,\infty)^{n}}e^{-r_{n+1}s}d\eta_{A_{k}}(r_{n+1})\\
 		&= \int_{[0,\infty)^{n}}e^{-r_{n+1}s}d\left [\sum_{k \in \mathbb{N}} \eta_{A_{k}}\right ](r_{n+1}),
 	\end{align*}
 	because  the representation is unique we obtain that $
 	\eta_{(\cup_{k \in \mathbb{N}}A_{k})} =\sum_{k \in \mathbb{N}} \eta_{A_{k}}$.  The function $A \times B \to \eta_{A}(B)$ is then a nonnegative bimeasure, which by Theorem $1.10$ in \cite{Berg1984}, there exists a nonnegative measure $\eta \in \mathfrak{M}([0,\infty)^{n}\times [0,\infty))$ such that $\eta(A\times B) = \eta_{A}(B)$, for every $A \in \mathscr{B}([0,\infty)^{n})$ and  $B \in \mathscr{B}([0,\infty))$. \\
 	Gathering all this information we obtain that
 	\begin{align*}
 		g( t_{1}, \ldots ,t_{n},t_{n+1})&=  \int_{[0,\infty)^{n}}e^{-\sum_{i=1}^{n}r_{i}t_{i}}d\eta_{t_{n+1}}(r_{1}, \ldots, r_{n})\\
 		&=\int_{[0,\infty)^{n+1}}e^{-\sum_{i=1}^{n}r_{i}t_{i}}e^{-r_{n+1}t_{n+1}}d\eta(r_{1}, \ldots, r_{n}, r_{n+1})\\
 		&=\int_{[0,\infty)^{n+1}}e^{-r\cdot t}d\eta(r), 
 	\end{align*}
 	which concludes that $(i)$ implies $(iii)$. \end{proof}

 The equivalence between relation $(ii)$ and $(iii)$ in Theorem \ref{PDseveralvariables} can also be proved as a corollary of Proposition $4.7$ page $115$ in \cite{Berg1984}.

 \section{Bernstein functions of order $k$ in $n$ variables}\label{Partialindependencetests}
 As presented in Section \ref{Terminology}, throughout this Section we always assume that $0\leq k \leq n$, as the functions being analyzed are intrinsically related with the sets $\mathcal{M}_{k}( (\mathbb{R}^{d})_{n})$.

 \begin{definition}\label{defnpdikkn} A   function $g:[0, \infty)^{n} \to \mathbb{R}$ is called   positive definite independent  of order $k$  in all Euclidean spaces (PDI$_{k,n}^{\infty}$)  if for every $d \in \mathbb{N}$  and $\mu \in \mathcal{M}_{k}( (\mathbb{R}^{d})_{n})$ it satisfies
 	$$
 	\int_{\mathbb{R}^{d} }	\int_{\mathbb{R}^{d} }(-1)^{k}g(\|u_{1} - v_{1}\|^{2}, \ldots,\|u_{n} - v_{n}\|^{2} )d\mu(u)d\mu(v) \geq 0.
 	$$	
 	If for every $d \in \mathbb{N}$  the previous inequality is an equality only when $\mu$ is the zero measure in $\mathcal{M}_{k}( (\mathbb{R}^{d})_{n})$, we say that $g$  is a strictly positive definite  independent function of order $k$  in all Euclidean spaces (SPDI$_{k,n}^{\infty}$).   
 \end{definition}

 Next, we provide a version of Lemma  \ref{PDInsimpli} for PDI$_{k,n}^{\infty}$ functions. For that, we define the sets 
 $$
 \partial_{k-1}^{n}:=\{r \in [0, \infty)^{n}, \quad |\{i,\quad  r_{i}>0  \} | < k \}
 $$
 They satisfy the inclusion property $\partial_{j}^{n} \subset \partial_{j+1}^{n}$ where  $\partial_{-1}^{n}= \emptyset$, $\partial_{0}^{n}= \{\vec{0}\}$, $\partial_{n-1}^{n}= [0, \infty)^{n}\setminus{(0, \infty)^{n}}$ and $ \partial_{n}^{n}=[0, \infty)^{n}$.

 \begin{lemma}\label{PDInsimplik}	Let $g: [0, \infty)^{n} \to \mathbb{R}$ be a  continuous  function and define
 	\begin{align*}
 		G(t):= \int_{[0, \infty)^{n}}g(s)d\mu_{k}^{n}[t,\vec{0}](s) = g(t)+  \sum_{j=0}^{k-1}(-1)^{k-j}\binom{n-j-1}{n-k}\sum_{|F|=j}g(t_{F}), 
 	\end{align*} 
 	where $t_{F}= \sum_{i\in F}t_{i}e_{i}$. Then, for any  $\mu \in \mathcal{M}_{k}(( \mathbb{R}^{d})_{n})$ 
 	\begin{align*} 
 		\int_{(\mathbb{R}^{d})_{n}}\int_{(\mathbb{R}^{d})_{n}}&(-1)^{k}g(\|u_{1} - v_{1}\|^{2}, \ldots,\|u_{n} - v_{n}\|^{2} )d\mu(u)d\mu(v)\\
 		&=\int_{(\mathbb{R}^{d})_{n}}\int_{(\mathbb{R}^{d})_{n}}(-1)^{k}G(\|u_{1} - v_{1}\|^{2}, \ldots,\|u_{n} - v_{n}\|^{2} )d\mu(u)d\mu(v),
 	\end{align*} 
 	hence, $g$ is PDI$_{k,n}^{\infty}$ if and only if  $G$ is PDI$_{k,n}^{\infty}$.\\
 	If  $g(t)=0$ for every $t \in \partial_{k-1}^{n}$ then $g=G$.\\
 	If $t \in \partial_{k-1}^{n}$ then $G(t)=0$.
 \end{lemma}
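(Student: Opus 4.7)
The plan is to exploit the vanishing property of measures in $\mathcal{M}_k$ recorded at Equation \ref{integmu0n}: any function on $\mathds{X}_n$ depending on fewer than $k$ of its coordinates integrates to zero against a measure in $\mathcal{M}_k(\mathds{X}_n)$. First I would note that each correction term $g(t_F)$ appearing in the expansion of $G$ has $|F|=j\leq k-1$. Substituting $t_i=\|u_i-v_i\|^2$, the resulting kernel on $(u,v)\in(\mathbb{R}^d)_n\times(\mathbb{R}^d)_n$ depends only on the pairs $(u_i,v_i)$ with $i\in F$, and hence, for each fixed $v$, is a function of at most $|F|\leq k-1$ of the $n$ coordinates of $u$.

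Applying Equation \ref{integmu0n} to $\mu\in\mathcal{M}_k((\mathbb{R}^d)_n)$ with $v$ held fixed gives $\int\cdots d\mu(u)=0$, so each correction term contributes nothing to the double integral. This yields the first equality of the lemma, and the equivalence between $g$ and $G$ being PDI$_{k,n}^{\infty}$ is then immediate from Definition \ref{defnpdikkn}.

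The two remaining assertions are short. For the second, whenever $|F|=j\leq k-1$ the vector $t_F$ has at most $j<k$ nonzero coordinates, so $t_F\in\partial_{k-1}^n$; if $g$ vanishes on $\partial_{k-1}^n$, every correction term is zero and $G=g$. For the third, I would use the integral representation $G(t)=\int g(s)\,d\mu^n_k[t,\vec{0}](s)$ together with Lemma \ref{measorderk}: setting $L:=\{i:\,t_i\neq 0\}$, the hypothesis $t\in\partial_{k-1}^n$ is precisely $|L|<k$, and Lemma \ref{measorderk} then gives $\mu^n_k[t,\vec{0}]=0$, so $G(t)=0$.

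The only point requiring a check is the bookkeeping identity that the combinatorial expression for $G$ given in the statement coincides with $\int g\,d\mu^n_k[t,\vec{0}]$; this is a direct expansion of the definition of $\mu^n_k$ in Lemma \ref{measorderk} with $x_{\vec{1}}=t$ and $x_{\vec{2}}=\vec{0}$, observing that $x_{\vec{1}_F+\vec{2}_{F^c}}$ is then exactly $t_F$. There is no substantive obstacle; the content of the lemma lies in isolating the role of $\mathcal{M}_k$ and parallels the proof of Lemma \ref{PDInsimpli}, of which this is the $k$-indexed generalization.
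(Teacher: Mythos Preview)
Your proposal is correct and follows essentially the same approach as the paper's proof: both invoke Equation \ref{integmu0n} for the integral identity (since each correction term $g(t_F)$ depends on at most $k-1$ coordinates), observe that $t_F\in\partial_{k-1}^n$ for the second assertion, and appeal to Lemma \ref{measorderk} for the vanishing of $\mu^n_k[t,\vec{0}]$ when $t\in\partial_{k-1}^n$. Your write-up is slightly more explicit about the ``fix $v$, integrate in $u$'' step, but the argument is the same.
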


 \begin{proof} Note that at the exception of the term $g(t)$ in the definition of $G$, the other terms   depend on a maximum of  $k-1$ among the $n$ variables of  $t$. As a consequence of  Equation \ref{integmu0n}, for every $\mu \in \mathcal{M}_{k}( (\mathbb{R}^{d})_{n})$  we obtain the equality
 	\begin{align*} 
 		\int_{(\mathbb{R}^{d})_{n}}\int_{(\mathbb{R}^{d})_{n}}&(-1)^{k}g(\|u_{1} - v_{1}\|^{2}, \ldots,\|u_{n} - v_{n}\|^{2} )d\mu(u)d\mu(v)\\
 		&=\int_{(\mathbb{R}^{d})_{n}}\int_{(\mathbb{R}^{d})_{n}}(-1)^{k}G(\|u_{1} - v_{1}\|^{2}, \ldots,\|u_{n} - v_{n}\|^{2} )d\mu(u)d\mu(v).
 	\end{align*} 
 	If  $g(t)=0$ for every $t \in \partial_{k-1}^{n}$ then $g=G$ by the way we defined $G$. For the remaining property is a direct consequence of Lemma \ref{measorderk}. \end{proof}	
 
 Now, our objective is to characterize the PDI$_{k,n}^{\infty}$ functions, but before that,  we present some properties and inequalities for the elementary symmetric polynomials which will be necessary. We obtain those relations using only well known properties for them and   additional information   can be found in  \cite{macdonald1998symmetric}.

 \subsection{Inequalities for elementary symmetric polynomials}\label{elementarysymmetricpolynomials}

 The elementary symmetric polynomials $p^{n}_{k}$, with $0\leq k\leq n$, are the functions
 $$
 p^{n}_{k}(r_{1}, \ldots, r_{n}):= \sum_{1 \leq i_{1}  < \ldots <  i_{k} \leq n}r_{i_{1}}\ldots r_{i_{k}}, 
 $$
 and  $p^{n}_{0}:= 1$. It is widely known that $p^{n}_{k}(\vec{1})= \binom{n}{k}$ and the  generating function formula
 \begin{equation}\label{genformelepol}
 	\prod_{i=1}^{n}(\lambda + r_{i})= \sum_{k=0}^{n}\lambda^{n-k}p_{k}^{n}(r), \quad \lambda \in \mathbb{R},  \quad  r \in \mathbb{R}^{n}.
 \end{equation}

 An important relation obtained from simple combinatorics  is that
 \begin{equation}\label{inducksympol}
 	p_{j}^{n+1}(r, r_{n+1})= p_{j}^{n}(r) + r_{n+1}p_{j-1}^{n}(r), \quad 0\leq j< n+1, \quad r \in \mathbb{R}^{n}, r_{n+1} \in \mathbb{R}.
 \end{equation}

 Now, for $n, k \in \mathbb{N}$, where $n\geq k \geq  0$  we define the function 
 \begin{align*}
 	H_{k}^{n}(r)&:= p^{n}_{n}(r) +  (-1)^{1}\binom{n-k}{n-k}p^{n}_{k-1}(r) + \ldots +(-1)^{k}\binom{n-2}{n-k}p_{1}^{n}(r) + (-1)^{k+1}\binom{n-1}{n-k}\\
 	&= p^{n}_{n}(r) + \sum_{j=0}^{k-1} (-1)^{k-j}\binom{n-j-1}{n-k}p^{n}_{j}(r)\\
 	&= \int_{[0, \infty)^{n}}p_{n}^{n}(s)d\mu_{k}^{n}[r, \vec{1}](s). 
 \end{align*}
 
 Note  the similarity with the measure $	\mu^{n}_{k}[x_{\vec{1}}, x_{\vec{2}}] $ defined in Lemma \ref{measorderk}.\\
 When  $k= n$ we have used this function applied with an  exponential in each coordinate to obtain the multivariable generalizations of Schoenberg's results, as by  Equation \ref{genformelepol}
 $$
 (-1)^{n}H_{n}^{n}(r)=(-1)^{n}p^{n}_{n}(r) + \sum_{j=0}^{n-1} (-1)^{ j } p^{n}_{j}(r)=\prod_{i=1}^{n}(1-r_{i}) =p_{n}^{n}(\vec{1}-r), 
 $$
 and this function appeared in Theorem \ref{basicradialndim}.

 \begin{lemma}\label{ineqorderk} Let $n \geq 2$ and $0 <  k \leq  n$, then for every  $a  \in [0,1]^{n}$
 	\begin{equation}\label{ineqorderk2eq1}
 		0\leq \binom{n}{k}^{-1}p_{k}^{n}(\vec{1}-a) \leq (-1)^{k} H_{k}^{n}(a) \leq p_{k}^{n}(\vec{1}-a). 
 	\end{equation} 
 	Also, 
 	\begin{equation}\label{ineqorderk2eq2}
 		0\leq p_{k}^{n}(s_{1}t_{1}, \ldots, s_{n}t_{n})\leq  p_{k}^{n}(s) p_{k}^{n}(t), \quad t,s \in  [0,\infty)^{n}
 	\end{equation}	
 \end{lemma}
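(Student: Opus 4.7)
The second inequality is essentially formal. Expanding
$$p_k^n(s)\,p_k^n(t) = \sum_{|F|=k}\sum_{|G|=k}\Bigl(\prod_{i\in F}s_i\Bigr)\Bigl(\prod_{j\in G}t_j\Bigr),$$
the diagonal contribution $F=G$ gives exactly $\sum_{|F|=k}\prod_{i\in F}s_it_i = p_k^n(s_1t_1,\ldots,s_nt_n)$, while the remaining summands are nonnegative since $s,t\in[0,\infty)^n$. Nonnegativity on the left is immediate.

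For the first inequality, the key step is to establish the recursion
$$H_k^{n+1}(a,a_{n+1}) \;=\; H_k^n(a) + (a_{n+1}-1)\,H_{k-1}^n(a), \qquad 1\le k\le n+1. \qquad(\star)$$
Starting from the definition of $H_k^{n+1}$, I would apply Equation \ref{inducksympol} to expand $p_j^{n+1}(a,a_{n+1})=p_j^n(a)+a_{n+1}p_{j-1}^n(a)$, reindex the resulting shifted sum, and split $\binom{n-j}{n+1-k}=\binom{n-j-1}{n+1-k}+\binom{n-j-1}{n-k}$ via Pascal. The $\binom{n-j-1}{n-k}$-piece collapses to $H_k^n(a)-p_n^n(a)$ by definition; combining the $\binom{n-j-1}{n+1-k}$-piece with $a_{n+1}p_n^n(a)-p_n^n(a)$, and observing that the boundary term $j=k-1$ of the shifted sum drops out because $\binom{n-k}{n+1-k}=0$, one reads off $(a_{n+1}-1)H_{k-1}^n(a)$.

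With $b_i:=1-a_i\in[0,1]$, the recursion becomes
$$(-1)^k H_k^{n+1}(a,a_{n+1}) \;=\; (-1)^k H_k^n(a) + b_{n+1}\,(-1)^{k-1}H_{k-1}^n(a),$$
and I would prove the sandwich of Equation \ref{ineqorderk2eq1} by induction on $n\ge k$. For the base case $n=k$, a direct computation gives $(-1)^k H_k^k(a)=(-1)^k\sum_{j=0}^k(-1)^{k-j}p_j^k(a)=\prod_{i=1}^k(1-a_i)=p_k^k(b)$ using Equation \ref{genformelepol} at $\lambda=-1$, so both bounds collapse to equality. For the inductive step in the range $2\le k\le n$, the upper bound follows from $(\star)$ combined with the identity $p_k^{n+1}(b,b_{n+1})=p_k^n(b)+b_{n+1}p_{k-1}^n(b)$; the lower bound uses the same identity together with $\binom{n+1}{k}^{-1}\le\binom{n}{k}^{-1}$ and $\binom{n+1}{k}^{-1}\le\binom{n}{k-1}^{-1}$, both immediate from $\binom{n+1}{k}=\binom{n}{k}+\binom{n}{k-1}$.

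The case $k=1$ must be handled separately, because the recursion then feeds in $(-1)^0H_0^n(a)=\prod a_i$, which does not satisfy the required constant lower bound $\binom{n}{0}^{-1}p_0^n(b)=1$. Here $(-1)H_1^n(a)=1-\prod a_i$, so the upper bound $1-\prod a_i\le\sum b_i$ reduces to $\sum a_i-\prod a_i\le n-1$, which follows by a short induction from the identity $\sum_{i=1}^{n+1}a_i-\prod_{i=1}^{n+1}a_i = [\sum_{i=1}^n a_i-\prod_{i=1}^n a_i]+\prod_{i=1}^n a_i+a_{n+1}(1-\prod_{i=1}^n a_i)$ together with $a_{n+1}\le 1$, while the lower bound $\prod a_i\le\tfrac{1}{n}\sum a_i$ follows from AM-GM combined with $\prod a_i\le(\prod a_i)^{1/n}$, valid since $\prod a_i\le 1$. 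The main obstacle is deriving $(\star)$ cleanly; the binomial bookkeeping is delicate, especially the vanishing of the $j=k-1$ boundary term, but once $(\star)$ is in place the inductive proof of the sandwich inequality is mechanical.
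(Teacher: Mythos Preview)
Your proof is correct and follows the same overall architecture as the paper: handle $k=1$ separately, then for fixed $k\ge 2$ induct on $n$ with base case $n=k$. The difference lies in the execution of the inductive step. The paper derives the expansion $H_k^{n+1}(a,a_{n+1})=C(a)+a_{n+1}H_{k-1}^n(a)$ and then computes $H_k^{n+1}(a,1)=H_k^n(a)$ as a separate calculation; it uses monotonicity in $a_{n+1}$ to iterate down to $(-1)^kH_k^k(a_F)=\prod_{i\in F}(1-a_i)$ for each $|F|=k$ and sums over $F$ to obtain the lower bound. You instead combine these two facts into the single two-term recursion $(\star)$, which mirrors exactly the recursion $p_k^{n+1}(b,b_{n+1})=p_k^n(b)+b_{n+1}p_{k-1}^n(b)$ on the bounding side; the lower bound then follows mechanically from the binomial inequalities $\binom{n+1}{k}\ge\binom{n}{k},\binom{n}{k-1}$ rather than from symmetry. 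Your route is somewhat cleaner for the lower bound; the paper's symmetry argument has the minor advantage of yielding the sharper intermediate statement $(-1)^kH_k^n(a)\ge\prod_{i\in F}(1-a_i)$ for every $|F|=k$. For the $k=1$ lower bound the paper simply averages $1-a_i\le 1-\prod_j a_j$, which is more direct than your AM-GM detour, though both are valid.
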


 \begin{proof}   First, we prove Equation \ref{ineqorderk2eq1} for an  arbitrary $n \geq 1$  and $k=1$, which is the following 
 	$$
 	0 \leq \frac{\sum_{i=1}^{n} (1 - a_{i})}{n} \leq 1 - \prod_{i=1}^{n}a_{i} \leq  \sum_{i=1}^{n} (1-a_{i}), \quad a\in [0, 1]^{n}.
 	$$
 	The positivity of the first inequality is immediate. The second inequality  is a direct consequence that for any $1\leq i \leq n$ we have that $1 - a_{i} \leq 1 - \prod_{j=1}^{n}a_{j}$. For  the third inequality,   the case $n=2$ holds because can be rewritten as the inequality $(1-a_{1})(1-a_{2})\geq 0$.   For the general case,  given  fixed  $a_{1}, \ldots a_{n } \in [0,1]$, the  linear function $a_{n+1} \in \mathbb{R} \to T(a_{n+1}):= - 1 + \prod_{i=1}^{n+1}a_{i} + \sum_{i=1}^{n+1} (1-a_{i})  $ is decreasing, then  for $a_{n+1} \in [0,1]$ it holds that $T(a_{n+1})\geq T(1)= -1 + \prod_{i=1}^{n }a_{i} + \sum_{i=1}^{n } (1-a_{i})   \geq 0$ by induction in $n$.\\ 
 	Now, for a fixed $k\geq 2$, the proof is done by induction in $n$, where the case $n=k$ is a direct consequence of the equality $(-1)^{n}H_{n}^{n}(r)=  p_{n}^{n}(\vec{1}-r)$  and we may also suppose that  it holds for any $k^{\prime}$ and $n^{\prime}$ such that $1 \leq k^{\prime}< k$ and $n^{\prime}\geq k^{\prime}$.\\
 	Due to Equation \ref{inducksympol}, the following polynomials are equal
 	\begin{equation}\label{inducksympolHKN}
 		\begin{split}
 			H_{k}^{n+1}(a, a_{n+1})&= \sum_{j=0}^{k-1} (-1)^{k-j}\binom{n-j}{n+1-k} p_{j}^{n}(a) \\
 			& \quad + a_{n+1}[p_{n}^{n}(a) + \sum_{j=1}^{k-1} (-1)^{k-j }\binom{n-j}{n+1-k}p_{j-1}^{n}(a)]\\
 			&= \sum_{j=0}^{k-1} (-1)^{k-j }\binom{n-j}{n+1-k} p_{j}^{n}(a) + a_{n+1}H_{k-1}^{n}(a).
 		\end{split}
 	\end{equation}
 	Hence, for a fixed $a \in [0, 1]^{n}$ the linear function  $a_{n+1} \in \mathbb{R} \to (-1)^{k}H_{k}^{n+1}(a, a_{n+1})$ is decreasing, because by the induction in $k$ we have that  $(-1)^{k}H_{k-1}^{n}(a)\leq 0$. However, 
 	\begin{align*}
 		&H_{k}^{n+1}(a,1)\\
 		&= p_{n}^{n}(a) + \sum_{j=0}^{k-1} (-1)^{k-j }\binom{n-j}{n+1-k} p_{j}^{n}(a) +  \sum_{j=1}^{k-1} (-1)^{k-j }\binom{n-j}{n+1-k}p_{j-1}^{n}(a) \\
 		&= p_{n}^{n}(a) -p_{k-1}^{n}(a)+  \sum_{j=0}^{k-2}(-1)^{k-j }\left [ \binom{n-j}{n+1-k} - \binom{n-j-1}{n+1-k} \right ]p_{j}^{n}(a)  = H_{k}^{n}(a),
 	\end{align*}
 	hence, by the induction in $n$ we have that $(-1)^{k}H_{k}^{n+1}(a, a_{n+1})\geq (-1)^{k}H_{k}^{n+1}(a,1)=(-1)^{k}H_{k}^{n}(a) $ when $a_{n+1}\in [0,1]$. Applying this relation recursively $n-k+1$ times we obtain that for $a_{1}, \ldots, a_{n+1} \in [0,1]$
 	$$
 	(-1)^{k}H_{k}^{n+1}(a_{1}, \ldots,a_{n+1}  ) \geq (-1)^{k}H_{k}^{k}(a_{1}, \ldots,a_{k}) =\prod_{i=1}^{k}(1-a_{i}) \geq 0, 
 	$$
 	but, since the functions involved are symmetric, it also holds that
 	$$
 	(-1)^{k}H_{k}^{n+1}(a_{1}, \ldots,a_{n+1}  ) \geq \prod_{i\in F}(1-a_{i}) \geq 0, \quad |F|=k, F \subset \{1, \ldots, n+1\}, 
 	$$
 	which implies  the first and second inequality in Equation \ref{ineqorderk2eq1}, because by summing all the above inequalities we obtain that 
 	$$
 	\binom{n+1}{k}(-1)^{k}H_{k}^{n+1}(a_{1}, \ldots,a_{n+1}  ) \geq \sum_{|F|=k}\prod_{i\in F}(1-a_{i}) = p_{k}^{n+1}(\vec{1} - a) \geq 0.
 	$$
 	For the third inequality in Equation \ref{ineqorderk2eq1}, using the same relations used for the other inequalities, we have that the following polynomials are equal
 	\begin{align*}
 		p_{k}^{n+1}&(\vec{1}-a, 1 - a_{n+1})-(-1)^{k} H_{k}^{n+1}(a, a_{n+1})\\
 		&=  p_{k}^{n}(\vec{1}-a) + p_{k-1}^{n}(\vec{1}-a)- \sum_{j=0}^{k-1} (-1)^{ j }\binom{n-j}{n+1-k} p_{j}^{n}(a)\\
 		& \quad \quad  +a_{n+1}( (-1)^{k+1}H_{k-1}^{n }(a) - p^{n}_{k-1}(\vec{1}-a)).
 	\end{align*}
 	In particular, for a fixed $a \in [0, 1]^{n}$ the term that multiplies $a_{n+1}$, that is, $(-1)^{k+1}H_{k-1}^{n }(a) - p^{n}_{k-1}(\vec{1}-a)$ is negative by the induction in $k$, thus the linear function  $a_{n+1} \in \mathbb{R} \to  p_{k}^{n+1}(\vec{1}-a, 1 - a_{n+1})-(-1)^{k} H_{k}^{n+1}(a, a_{n+1})$  is decreasing. Hence,  for $a_{n+1} \in [0,1]$ we have that 
 	\begin{align*}
 		p_{k}^{n+1}(\vec{1}-a, 1 - a_{n+1})-(-1)^{k} H_{k}^{n+1}(a, a_{n+1})&\geq  p_{k}^{n+1}(\vec{1}-a, 0)-(-1)^{k} H_{k}^{n+1}(a, 1)\\
 		&=  p_{k}^{n}(\vec{1}-a)- (-1)^{k}H_{k}^{n}(a),
 	\end{align*}
 	and the last term in the previous inequality  is nonnegative  by the induction in $n$, proving the inequality $(-1)^{k} H_{k}^{n+1}(a, a_{n+1})\leq p_{k}^{n+1}(\vec{1}-a, 1 - a_{n+1})$.\\
 	The proof for the remaining inequality is simpler, as  
 	\begin{align*}
 		p_{k}^{n}&(s_{1}t_{1}, \ldots, s_{n}s_{n})=	\sum_{1 \leq i_{1}  < \ldots <  i_{k} \leq n}s_{i_{1}}\ldots s_{i_{k}}t_{i_{1}}\ldots t_{i_{k}}   \\
 		&\leq \left (\sum_{1 \leq i_{1}  < \ldots <  i_{k}\leq n}s_{i_{1}}\ldots s_{i_{k}} \right ) \left (\sum_{1 \leq i_{1}  < \ldots <  i_{k} \leq n}t_{i_{1}}\ldots t_{i_{k}} \right ) = 	p_{k}^{n}(s)p_{k}^{n}(t).
 \end{align*} \end{proof}

 Now, we prove some inequalities for the additional term in the integral representation of Theorem \ref{bernsksevndim}, which in case $n=k$ at Theorem \ref{basicradialndim}  is 
 $$
 \prod_{i=1}^{n}\frac{1+ r_{i}}{r_{i}}=\frac{1+ \ldots  + p_{n}^{n}(r) }{p_{n}^{n}(r)}= \frac{p_{n}^{n}(\vec{1} + r)}{p_{n}^{n}(r)}.
 $$

 First, note that by 	Equation \ref{genformelepol}, we obtain that
 $$
 \sum_{k=0}^{n}\lambda^{n-k}p_{k}^{n}( \vec{1}+ r)= \prod_{i=1}^{n}(\lambda +1 + r_{i})= \sum_{k=0}^{n}(\lambda+1)^{n-k}p_{k}^{n}( r),  \quad  \lambda \in \mathbb{R}, \quad r \in \mathbb{R}^{n}
 $$
 which implies the following representation
 \begin{equation}\label{pknr+1}
 	p_{k}^{n}(\vec{1} +r)= \binom{n}{n-k}p_{0}^{n}(r) + \ldots + \binom{n-k}{n-k}p_{k}^{n}(r)= \sum_{j=0}^{k}\binom{n-j}{n-k}p_{j}^{n}(r), \quad 0\leq k\leq  n.
 \end{equation}

 Due to Equation \ref{pknr+1}, we have that
 
 \begin{equation} \label{eqpkn+1pkn}
 	\sum_{j=0}^{k} p^{n}_{j}(r) \leq p^{n}_{k}(\vec{1} + r) \leq  \binom{n}{k} \sum_{j=0}^{k} p^{n}_{j}(r), \quad r \in [0, \infty)^{n}.
 \end{equation}

 \begin{lemma} Let  $0< k \leq n$, then for  every  $ F\subset \{1, \ldots, n\}  $ with $|F|= k$
 	\begin{equation}\label{ineqdifforderkradial0}
 		\left [\sum_{j=0}^{k} p^{n}_{j}(r) \right ]\prod_{i \in F}r_{i} \leq p^{n}_{k}(r)\prod_{i \in F}(1+r_{i}), \quad r \in [0, \infty)^{n}.
 	\end{equation}
 	Consequently, for for  every		$ L\subset \{1, \ldots, n\}  $ with $l:=|L|\leq k$
 	\begin{equation}\label{ineqdifforderkradial}
 		p^{n}_{k}(\vec{1} + r)p_{k-l}^{n-l}(r_{L^{c}})\left [\prod_{i \in L}r_{i} \right ] \leq \binom{n}{k} p^{n}_{k}(r)p_{k-l}^{n-l}(\vec{1} + r_{L^{c}}) \left [\prod_{i \in L}(1+r_{i})\right ],  \quad r \in [0, \infty)^{n}, 
 	\end{equation}
 	also,
 	\begin{equation}\label{ineqdifforderkradial2}
 		p^{n}_{k}\left (\frac{r_{1}}{1+r_{1}},\ldots, \frac{r_{n}}{1+r_{n}} \right )\leq \binom{n}{k}^{2}\frac{ p^{n}_{k}(r)} {p^{n}_{k}(\vec{1} + r)}, \quad  r \in [0, \infty)^{n}.  
 	\end{equation}

 \end{lemma}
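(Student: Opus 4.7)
The plan is to establish \eqref{ineqdifforderkradial0} first by induction on $n$, then derive \eqref{ineqdifforderkradial} by summing \eqref{ineqdifforderkradial0} over an appropriate family of $k$-subsets, and finally obtain \eqref{ineqdifforderkradial2} by specializing and summing once more. The backbone of the whole argument is the recurrence $p_c^{n+1}(r,r_{n+1})=p_c^n(r)+r_{n+1}p_{c-1}^n(r)$ from \eqref{inducksympol} and the already-known bound $p_k^n(\vec 1+r)\leq\binom{n}{k}\sum_{j=0}^k p_j^n(r)$ from \eqref{eqpkn+1pkn}.

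For \eqref{ineqdifforderkradial0}, I induct on $n$, carrying $k$ and $F$ along: the base case $n=k$ is an equality because $\sum_{j=0}^k p_j^k(r)=\prod_{i=1}^k(1+r_i)$ and $p_k^k(r)=\prod_{i=1}^k r_i$. In the step $n\to n+1$ I split on whether $n+1\in F'$. If $n+1\in F'$, set $F=F'\setminus\{n+1\}$ (so $|F|=k-1$), expand both sides using the recurrence, apply the induction hypothesis at $(n,k-1,F)$ to the $r_F\sum_{c=0}^{k-1}p_c^n(r)$ block, and reduce the remainder to the trivial bound $r_{n+1}\prod_{i\in F}r_i\leq(1+r_{n+1})\prod_{i\in F}(1+r_i)$. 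If $n+1\notin F'$, the term $r_{F'}\sum_{c=0}^k p_c^n(r)$ is handled by the induction hypothesis at $(n,k,F')$, while the extra term $r_{n+1}r_{F'}\sum_{c=0}^{k-1}p_c^n(r)$ is bounded by the following maneuver: pick any $i_0\in F'$, apply the induction hypothesis at $(n,k-1,F'\setminus\{i_0\})$, multiply the result by $r_{i_0}$, and absorb this factor via $r_{i_0}\leq 1+r_{i_0}$ to recover $\prod_{i\in F'}(1+r_i)$ on the right. This last maneuver is the main obstacle, as the naive attempt to invoke the induction hypothesis with $|F'|=k$ and a sum up to $k-1$ has mismatched indices.

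For \eqref{ineqdifforderkradial}, I sum \eqref{ineqdifforderkradial0} over all $F$ with $L\subset F$ and $|F|=k$. The identities
\[
\sum_{F\supset L,\,|F|=k} r_F \;=\; r_L\,p^{n-l}_{k-l}(r_{L^{c}}),\qquad \sum_{F\supset L,\,|F|=k}\prod_{i\in F\setminus L}(1+r_i) \;=\; p^{n-l}_{k-l}(\vec 1+r_{L^{c}}),
\]
turn the sum into
\[
\Bigl[\textstyle\sum_{j=0}^k p_j^n(r)\Bigr]\,r_L\,p^{n-l}_{k-l}(r_{L^{c}}) \;\leq\; p_k^n(r)\,p^{n-l}_{k-l}(\vec 1+r_{L^{c}})\,\textstyle\prod_{i\in L}(1+r_i),
\]
and multiplying the left-hand factor $\sum_{j=0}^k p_j^n(r)$ by $\binom{n}{k}$ and invoking \eqref{eqpkn+1pkn} replaces it by $p_k^n(\vec 1+r)$, which is exactly \eqref{ineqdifforderkradial}.

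Finally, for \eqref{ineqdifforderkradial2}, I take \eqref{ineqdifforderkradial} with $l=k$ and $L=S$ a $k$-subset (so the $p^{n-k}_0$ factors become $1$), obtaining $\prod_{i\in S}\tfrac{r_i}{1+r_i}\leq \binom{n}{k}\tfrac{p_k^n(r)}{p_k^n(\vec 1+r)}$. Summing over the $\binom{n}{k}$ subsets $S$ of size $k$ and recognizing the left-hand side as $p_k^n\!\left(\tfrac{r_1}{1+r_1},\dots,\tfrac{r_n}{1+r_n}\right)$ gives \eqref{ineqdifforderkradial2} with the claimed factor $\binom{n}{k}^2$.
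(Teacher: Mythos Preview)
Your argument is correct. The derivations of \eqref{ineqdifforderkradial} and \eqref{ineqdifforderkradial2} are essentially the paper's: sum \eqref{ineqdifforderkradial0} over the $k$-subsets $F\supset L$ (respectively all $k$-subsets) and then trade $\sum_{j=0}^{k}p_j^n(r)$ for $p_k^n(\vec 1+r)$ via \eqref{eqpkn+1pkn}, picking up the factor $\binom{n}{k}$.

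The genuine difference is in how you prove \eqref{ineqdifforderkradial0}. The paper gives a one-shot combinatorial argument: for each monomial $r_{i_1}\cdots r_{i_j}$ in $p_j^n(r)$ it exhibits an injective assignment
\[
(r_{i_1}\cdots r_{i_j})\prod_{i\in F}r_i \;=\;(r_{\ell_1}\cdots r_{\ell_k})\prod_{i\in\mathcal F}r_i,
\qquad \{\ell_1,\dots,\ell_k\}\subset\{1,\dots,n\},\;\mathcal F\subset F,\;|\mathcal F|=j,
\]
which immediately yields $p_j^n(r)\prod_{i\in F}r_i\le p_k^n(r)\sum_{\mathcal F\subset F,\,|\mathcal F|=j}\prod_{i\in\mathcal F}r_i$; summing over $j$ and recognizing $\sum_{\mathcal F\subset F}\prod_{i\in\mathcal F}r_i=\prod_{i\in F}(1+r_i)$ finishes it in two lines. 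Your route instead runs an induction on $n$ built on the recurrence \eqref{inducksympol}, splitting on $n+1\in F'$ or not. This is longer but entirely mechanical; one small point worth making explicit in your write-up is that the induction must be set up so that at stage $n$ the statement is already available for \emph{all} $k'\le n$ (equivalently, a double induction with outer index $k$ and inner index $n\ge k$), since both cases invoke the hypothesis at $(n,k-1,\cdot)$ in addition to $(n,k,\cdot)$, and the boundary $k'=0$ should be read as the tautology $1\le 1$. With that said, the argument goes through cleanly; the paper's injection is shorter and more transparent, while your approach has the virtue of being completely systematic and avoids having to describe the injection.
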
	
 
 \begin{proof} 	Under the hypothesis of $k,n, j$ and $F$, for any given   $1\leq i_{1}  < \ldots < i_{j} \leq n$ we may choose $1\leq \ell_{1} < \ldots < \ell_{k} \leq n $ and  an $\mathcal{F} \subset F$ such that $|\mathcal{F}|=j$   that satisfies
 	$$
 	[r_{i_{1}} \ldots r_{i_{j}}] \prod_{i \in F}r_{i} = [r_{\ell_{1}} \ldots r_{\ell_{k}} ] \prod_{i \in \mathcal{F}}r_{i}, \quad r \in \mathbb{R}^{n}.
 	$$
 	This association is injective in the sense that for a distinct sequence  of $j$ terms in $\{1, \dots, n\}$, either we have a  distinct  sequence  of $k$ terms in $\{1, \dots, n\}$ or we have a distinct subset   $\mathcal{F}$ of $F$ with $j$ terms, consequently
 	$$
 	p^{n}_{j}(r)\prod_{i \in F}r_{i} \leq p^{n}_{k}(r) \sum_{|\mathcal{F}|=j , \mathcal{F} \subset F } \left [\prod_{i \in \mathcal{F}}r_{i} \right ], \quad r \in [0, \infty)^{n}, \quad 0 \leq j \leq k.
 	$$
 	If we sum all values of $j$ in the previous inequality and use that
 	$$
 	\prod_{i \in F}(1+r_{i}) = \sum_{j=0}^{k}\sum_{|\mathcal{F}|=j,\mathcal{F} \subset F } \left [\prod_{i \in \mathcal{F}}r_{i} \right ], \quad r \in \mathbb{R}^{n}
 	$$
 	where the case $\mathcal{F}=\emptyset$  is the constant $1$, we obtain Equation \ref{ineqdifforderkradial0}.\\
 	To obtain Equation \ref{ineqdifforderkradial}, for a fixed 	$ L\subset \{1, \ldots, n\}  $ with $l:=|L|\leq k$,  we sum the   $\binom{n-l}{k-l}$ possibilities of an  $ L^{\prime}\subset \{1, \ldots, n\} \setminus L  $ with $|L^{\prime}|=k-l$ by taking $F= L\cup L^{\prime}$ in Equation \ref{ineqdifforderkradial0}, obtaining  that  for any $r \in [0, \infty)^{n}$
 	\begin{align*}
 		\left [\sum_{j=0}^{k} p^{n}_{j}(r) \right ]p_{k-l}^{n-l}(r_{L^{c}})\prod_{i \in L}r_{i} &\leq p^{n}_{k}(r)p_{k-l}^{n-l}(\vec{1} + r_{L^{c}})\prod_{i \in L}(1+r_{i})
 	\end{align*}
 	and Equation \ref{eqpkn+1pkn} concludes the argument.\\
 	To obtain Equation \ref{ineqdifforderkradial2}, by Equation \ref{ineqdifforderkradial0} we have that  for every $ F\subset \{1, \ldots, n\}  $ with $|F|= k$
 	$$
 	\prod_{i \in F}\left (\frac{r_{i}}{1+r_{i}} \right )  \left [\sum_{j=0}^{k} p^{n}_{j}(r) \right ]\leq p^{n}_{k}(r), \quad   r \in [0, \infty)^{n},
 	$$
 	if we sum the $\binom{n}{k}$ possibilities for the possible subsets $F$  in this inequality, we obtain that
 	$$	
 	p^{n}_{k}\left (\frac{r_{1}}{1+r_{1}},\ldots, \frac{r_{n}}{1+r_{n}} \right )\left [\sum_{j=0}^{k} p^{n}_{j}(r) \right ] \leq  \binom{n}{k}p^{n}_{k}(r),  \quad r \in [0, \infty)^{n},  
 	$$
 	and Equation \ref{eqpkn+1pkn} concludes the argument.\end{proof}	
 
 We often use the constant $2^{2n}$ instead of $\binom{n}{k}^{2}$ when using  Equation \ref{ineqdifforderkradial2} to simplify the expressions.
 
 The fraction $p_{k}^{n}(\vec{1} + r)/ p_{k}^{n}(r)$ is part of the integral representation  in Theorem \ref{bernsksevndimpart3}, however, we emphasize that we can also use $p_{k}^{n}(c + r)/ p_{k}^{n}(r)$ for any fixed $c \in (0, \infty)^{n}$, or more generally 	$\sum_{j=0}^{k} b_{j}p^{n}_{j}(r)$, for fixed positive $b_{j}$, $0\leq j \leq k$. We prefer to use  $p_{k}^{n}(\vec{1} + r)/ p_{k}^{n}(r)$ to simplify the expressions.

 \subsection{Radial PDI functions of order $k$ in infinite dimensions}\label{RadialPDIKinfinitedimensions}

 We define the following substitute for the exponential function
 \begin{equation}\label{exponentialkn}
 	E_{k}^{n}(s):=  H_{k}^{n}(e^{-s_{1}}, \ldots, e^{-s_{n}}), \quad s \in \mathbb{R}^{n}, 
 \end{equation}
 in the sense that by Lemma \ref{measorderk}
 \begin{equation}\label{exponentialknmeas}
 	E_{k}^{n}(s)= \int_{\mathbb{R}^{n}}\left (\prod_{i=1}^{n}e^{-r_{i}}\right )d\mu_{k}^{n}[\delta_{s}, \delta_{\vec{0}}](r).
 \end{equation}
 
 For $r,t \in \mathbb{R}^{n}$ we use  the entrywise multiplication $r	\odot t:=(r_{1}t_{1}, \ldots, r_{n}t_{n}) \in \mathbb{R}^{n}$.

 Note that if $n=k$ the function  $E_{k}^{k}(r	\odot t)= \prod_{i=1}^{n}(e^{-r_{i}t_{i}} - 1)$ appears in Theorem \ref{basicradialndim}.

 Our aim in this Section is to characterize the continuous functions $g:[0, \infty)^{n}\to \mathbb{R}$ that are  PDI$_{k, n}^{\infty}$.

 \begin{definition}For $0\leq k\leq n $, a  function $g:(0, \infty)^{n} \to \mathbb{R}$ is  called a  Bernstein function  of order $k$,   if $g \in C^{\infty}((0, \infty)^{n})$ and the $\binom{n}{k}$ functions $[\partial^{\vec{1}_{F}}]g$  are completely monotone for every $|F|=k$. 
 \end{definition}

 We present two classes of examples for Bernstein functions  of order $k$   in $(0, \infty)^{n}$. For the first one,   let $\eta \in \mathfrak{M}([0,\infty)^{n}\setminus{\partial_{k}^{n}})$ be a  nonnegative measure, we affirm that  
 \begin{equation}\label{berskex}
 	g(t):= \int_{[0, \infty)^{n}\setminus{\partial_{k}^{n}}} 	(-1)^{k}	E_{k}^{n}( r\odot t)  \frac{p^{n}_{k}(r + \vec{1})}{p^{n}_{k}(r)}d\eta(r),
 \end{equation}
 is a well defined Bernstein function  of order $k$ in $(0, \infty)^{n}$ that is continuous in $[0, \infty)^{n}$. Indeed, by Equation \ref{ineqorderk2eq1} the integrand is a nonnegative function and 
 $$
 (-1)^{k}	E_{k}^{n}( r\odot t)  \leq p_{k}^{n}(1-e^{-r_{1}t_{1}}, \ldots, 1-e^{-r_{n}t_{n}}).
 $$
 Using in order the  Equations \ref{bern1ineq2}, \ref{ineqorderk2eq2}  and  \ref{ineqdifforderkradial2}    we get that
 \begin{align*}
 	p_{k}^{n}(1-e^{-r_{1}t_{1}},& \ldots, 1-e^{-r_{n}t_{n}}) \leq  p_{k}^{n} (2\max(1, t_{1})r_{1}/(1+r_{1}), \ldots,  2\max(1, t_{n})r_{n}/(1+r_{n}) )\\
 	& \leq 2^{k} p_{k}^{n}(\max(1, t_{1}), \ldots, \max(1, t_{n}))p_{k}^{n} (r_{1}/(1+r_{1}), \ldots,  r_{n}/(1+r_{n}) )\\
 	&\leq  2^{2n + k} p_{k}^{n}(\vec{1} + t) \frac{p_{k}^{n}(r)}{p_{k}^{n}( \vec{1} + r)}, 
 \end{align*} 
 
 hence
 
 \begin{equation}\label{ex1compordkinn}
 	0\leq (-1)^{k}E_{k}^{n}(r \odot t) \frac{p^{n}_{k}(\vec{1}+ r)}{p^{n}_{k}(r)} \leq  2^{2n +k}  p_{k}^{n}(\vec{1} + t).
 \end{equation}
 
 With this inequality we obtain that $g$ is well defined and continuous in $[0, \infty)^{n}$. Also, from  Equation \ref{ineqorderk2eq1} we obtain that $g(t)=0$ when $t \in \partial_{k-1}^{n}$.

 For the differentiability, a consequence of  Equation \ref{inducksympolHKN} is 
 $$
 \partial_{t_{n}}E_{k}^{n}(r_{1}t_{1}, \ldots, r_{n}t_{n})= -r_{n}e^{-r_{n}t_{n}}E_{k-1}^{n-1}(r_{1}t_{1}, \ldots, r_{n-1}t_{n-1}). 
 $$
 From  Equation  \ref{ex1compordkinn} we get that
 \begin{align*}
 	0\leq &(-1)^{k-1}E_{k-1}^{n-1}(r_{1}t_{1}, \ldots, r_{n-1}t_{n-1})\\
 	&\leq 2^{2n+k-3}p_{k-1}^{n-1}(1+ t_{1}, \ldots, 1+t_{n-1})\frac{p_{k-1}^{n-1}(r_{1}, \ldots, r_{n-1})}{p_{k-1}^{n-1}(1+r_{1}, \ldots,1+ r_{n-1})}  ,  
 \end{align*}	
 and by using  Equation  \ref{ineqdifforderkradial} in the case $L=\{n\}$
 \begin{align*}
 	0&\leq r_{n}e^{-r_{n}t_{n}}(-1)^{k-1}E_{k-1}^{n-1}(r_{1}t_{1}, \ldots, r_{n-1}t_{n-1}) \frac{p^{n}_{k}(\vec{1}+r)}{p^{n}_{k}(r)}\\
 	&\leq 2^{2n+k-3}  p_{k-1}^{n-1}(1+ t_{1}, \ldots, 1+t_{n-1})r_{n}e^{-r_{n}t_{n}}\frac{p_{k-1}^{n-1}(r_{1}, \ldots, r_{n-1})}{p_{k-1}^{n-1}(1+r_{1}, \ldots,1+ r_{n-1})} \frac{p^{n}_{k}(\vec{1}+r)}{p^{n}_{k}(r)}\\
 	&\leq 2^{2n+k-3} \binom{n}{k} p_{k-1}^{n-1}(1+ t_{1}, \ldots, 1+t_{n-1})(1+r_{n})e^{-r_{n}t_{n}},
 \end{align*}
 which is locally $\eta$ integrable for any $t \in (0, \infty)^{n}$, thus 
 $$
 [\partial^{e_{n}}g](t_{1},\ldots, t_{n})= \int_{[0, \infty)^{n}\setminus{\partial_{k}^{n}}}  r_{n}e^{-r_{n}t_{n}}(-1)^{k-1}E_{k-1}^{n-1}(r_{1}t_{1}, \ldots, r_{n-1}t_{n-1}) \frac{1+ p^{n}_{k}(r)}{p^{n}_{k}(r)}d\eta(r).	
 $$
 More generally, for any $L \subset \{1,\ldots, n\}$ with $|L|=l\leq k$, we have that 
 \begin{equation}\label{firstexamplepdiLform}
 	[\partial^{\vec{1}_{L}}g](t)=\int_{[0, \infty)^{n}\setminus{\partial_{k}^{n}}}  \left [\prod_{i \in L}r_{i}e^{-r_{i}t_{i}} \right ] (-1)^{k-l}E_{k-l}^{n-l} (r_{L^{c}} \odot t_{L^{c}}) \frac{1+p^{n}_{k}(r)}{p^{n}_{k}(r)}d\eta(r).	
 \end{equation}
 This occurs because applying Equation \ref{inducksympol}  recursively,  we obtain the  following expression for  the derivatives in the variable $t$
 \begin{equation}\label{firstexamplepdiL}
 	\partial^{\vec{1}_{L}}E_{k}^{n}(r_{1}t_{1}, \ldots, r_{n}t_{n})=\left [\prod_{i \in L}r_{i}e^{-r_{i}t_{i}} \right ] (-1)^{l} E_{k-l}^{n-l} (r_{L^{c}} \odot t_{L^{c}}), 
 \end{equation}	
 then by using in order the Equations \ref{ex1compordkinn}  and \ref{ineqdifforderkradial}
 \begin{align*}
 	0 &\leq \left[\prod_{i \in L}r_{i}e^{-r_{i}t_{i}} \right ](-1)^{k-l} E_{k-l}^{n-l} (r_{L^{c}} \odot t_{L^{c}})\frac{p^{n}_{k}(\vec{1}+r)}{p^{n}_{k}(r)} \\
 	&\leq 2^{2n+k-3l}p_{k-l}^{n-l}(\vec{1} + t_{L^{c}})\left [\prod_{i \in L}r_{i}e^{-r_{i}t_{i}} \right ]\frac{p^{n-l}_{k-l}(r_{L^{c}})}{p^{n-l}_{k-l}(\vec{1} + r_{L^{c}})}  \frac{p^{n}_{k}(\vec{1} + r)}{p^{n}_{k}(r)}\\
 	&\leq 2^{2n+k-3l}\binom{n }{k}p_{k-l}^{n-l}(\vec{1} + t_{L^{c}})\left [\prod_{i \in L}(1+r_{i})e^{-r_{i}t_{i}} \right ],
 \end{align*}
 which is locally  $\eta$ integrable for any $t \in (0, \infty)^{n}$.
 In particular, when $|L|=k$
 $$
 [\partial^{\vec{1}_{L}}g](t_{1},\ldots, t_{n})= \int_{[0, \infty)^{n}\setminus{\partial_{k}^{n}}}  e^{-r\cdot t}   \left [\prod_{i \in L}r_{i} \right ] \frac{p^{n}_{k}(\vec{1}+r)}{p^{n}_{k}(r)}d\eta(r), 
 $$
 which is clearly completely monotone in $(0, \infty)^{n}$ by Theorem \ref{Bochnercomplsevndim}. It is interesting to point out that $\partial^{\vec{1}_{L}}g$ is a nonnegative function for every $|L|\leq k$

 For a  second class of examples for Bernstein functions of order $k$ in $(0, \infty)^{n}$, let $F \subset \{1, \ldots, n\}$, $|F|=k$, and suppose that $\psi^{F}: [0, \infty)^{k} \to \mathbb{R}$ is a continuous Bernstein function of order $k$ in $(0,\infty)^{k}$ that is zero in $\partial_{k-1}^{k}$. Then 
 $$
 (t_{1}, \ldots, t_{n}) \to \psi^{F}(t_{F}), 
 $$
 is a Bernstein function of order $k$ in $(0,\infty)^{n}$. Indeed, the previous function is an element of $C^{\infty}((0, \infty)^{n})$, and for any $L\subset \{1, \ldots, n\}$, $|L|=k$, we have that $\partial^{\vec{1}_{L}}\psi^{F}=0$ if $L \neq F$  and    the function  $\partial^{\vec{1}_{F}}\psi^{F}$ is completely monotone by the hypothesis.

 \begin{theorem}\label{bernsksevndim} Let $n> k\geq 1 $ $g:[0, \infty)^{n} \to \mathbb{R}$ be a continuous function such that  $g(t)=0$ for every $t\in \partial_{k-1}^{n}$. Then  the function $g$    is  a Bernstein function of order $k$  in $(0, \infty)^{n}$  if and only if it can be represented as
 	\begin{align*}   
 		g(t)&
 		=  \sum_{|F|=k }\psi^{F}(t_{F})+\int_{[0, \infty)^{n}\setminus{\partial_{k}^{n}}}  	(-1)^{k}	E_{k}^{n}( r\odot t)  \frac{p^{n}_{k}(\vec{1}+r)}{p^{n}_{k}(r)}d\eta(r)
 	\end{align*}
 	where  the measure $\eta \in \mathfrak{M}([0,\infty)^{n}\setminus{\partial_{k}^{n}})$ is nonnegative and  the functions $\psi^{F}: [0, \infty)^{k} \to \mathbb{R}$  are continuous Bernstein functions of order $k$  in $(0, \infty)^{k}$ that are zero in $\partial_{k-1}^{k}$. Further, the representation is unique.
 \end{theorem}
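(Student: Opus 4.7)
My plan is to prove sufficiency by direct verification and then extract the measures $\eta$ and $\psi^F$ from the Bochner representations of the partial derivatives $\partial^{\vec{1}_F} g$ in the necessity direction. For sufficiency, the integrand is nonnegative and locally bounded by the inequalities of Section \ref{elementarysymmetricpolynomials}, so the integral defines a continuous function; it vanishes on $\partial_{k-1}^n$ because $(-1)^k H_k^n(a) = 0$ whenever $a$ has at least $n-k+1$ entries equal to $1$, and each $\psi^F(t_F)$ vanishes on $\partial_{k-1}^k$ by hypothesis. To verify the Bernstein property of order $k$, fix any $|L|=k$ and compute $\partial^{\vec{1}_L}$ of the right hand side: only $\partial^{\vec{1}}\psi^L$ survives among the boundary terms and is completely monotone on $(0,\infty)^k$ by hypothesis, while differentiation inside the integral produces exactly the expression in Equation \ref{firstexamplepdiLform} with $L=F$, which is completely monotone on $(0,\infty)^n$ by Theorem \ref{Bochnercomplsevndim}.

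For necessity, each $\partial^{\vec{1}_F}g$ (with $|F|=k$) is completely monotone on $(0,\infty)^n$, so by Theorem \ref{Bochnercomplsevndim} there is a unique nonnegative $\sigma_F \in \mathfrak{M}([0,\infty)^n)$ with $\partial^{\vec{1}_F}g(t) = \int e^{-r\cdot t}\,d\sigma_F(r)$. Computing $\partial^{\vec{1}_{F\cup F'}}g$ via either $\sigma_F$ or $\sigma_{F'}$ and invoking Bochner uniqueness yields the cross-consistency relation $\prod_{i\in F'\setminus F}r_i\,d\sigma_F = \prod_{i\in F\setminus F'}r_i\,d\sigma_{F'}$. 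Split each $\sigma_F = \sigma_F^{\partial} + \sigma_F^{\mathrm{int}}$ with $\sigma_F^{\partial}$ the restriction to $\{r_{F^c}=0\}$; the pushforward of $\sigma_F^{\partial}$ to $[0,\infty)^k$ serves as the Bochner data for $\partial^{\vec{1}}\psi^F$, and Theorem \ref{bernssevndim} applied in $k$ variables produces a continuous Bernstein function $\psi^F$ of order $k$ on $[0,\infty)^k$ vanishing on $\partial_{k-1}^k$. The interior pieces glue via the identity $\tfrac{d\sigma_F^{\mathrm{int}}}{\prod_{i\in F}r_i} = \tfrac{d\sigma_{F'}^{\mathrm{int}}}{\prod_{i\in F'}r_i}$ on the overlap $\{r_F>0\}\cap\{r_{F'}>0\}$ (a direct consequence of the cross-consistency relation after cancelling $\prod_{i\in F\cap F'}r_i$), so setting $d\eta := \tfrac{p_k^n(r)}{p_k^n(\vec{1}+r)\prod_{i\in F}r_i}\,d\sigma_F^{\mathrm{int}}$ on $\{r_F>0\}$ for any $|F|=k$ defines a nonnegative measure $\eta$ on $[0,\infty)^n\setminus\partial_k^n$; its finiteness tested against the representation integrand is controlled by Equation \ref{ineqdifforderkradial2}.

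To close the necessity direction, both $g$ and the constructed right hand side are continuous on $[0,\infty)^n$, vanish on $\partial_{k-1}^n$, and by construction have matching mixed derivatives $\partial^{\vec{1}_F}$ for every $|F|=k$; this forces their equality once one shows that a function whose every $k$-fold mixed derivative vanishes on $(0,\infty)^n$ and which vanishes on $\partial_{k-1}^n$ must be identically zero on $[0,\infty)^n$. Uniqueness of the representation then follows from Bochner uniqueness applied to $\sigma_F$ and from the uniqueness in Theorem \ref{bernssevndim} applied to the boundary part $\sigma_F^{\partial}$. The main obstacle is the final vanishing implication: while the structural fact that $\partial^{\vec{1}_F}h = 0$ for all $|F|=k$ forces $h$ to be a sum of functions each depending on at most $k-1$ coordinates, concluding that the boundary condition on $\partial_{k-1}^n$ kills all such terms requires a Mobius-style inversion on the subset lattice or, equivalently, an induction that peels off the contributions stratum by stratum from the boundary inward, which is the delicate combinatorial heart of the argument.
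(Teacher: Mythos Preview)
Your plan follows the paper's proof closely: Bochner representation of each $\partial^{\vec{1}_F}g$, cross-consistency of the resulting measures, splitting off the boundary part supported on $\{r_{F^c}=0\}$ to build $\psi^F$, gluing the interior parts into a single $\eta$, and then matching the constructed function with $g$. The final ``vanishing implication'' you flag as the main obstacle is actually short in the paper: from $\partial^{\vec{1}_F}(h-g)=0$ one gets that the finite difference $\sum_{L\subset F}(-1)^{|L|}(h-g)(t-t_L)$ vanishes identically, and since all terms except the top one live in $\partial_{k-1}^n$ (where $h-g=0$) this forces $(h-g)(t_F)=0$; iterating pushes the vanishing from $\partial_{k-1}^n$ up through $\partial_k^n,\partial_{k+1}^n,\ldots$ to all of $[0,\infty)^n$.

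There is, however, a genuine gap one step earlier that you do not acknowledge. You assert that the constructed right-hand side and $g$ ``by construction have matching mixed derivatives $\partial^{\vec{1}_F}$'', but this is \emph{not} automatic from your setup. Computing $\partial^{\vec{1}_F}$ of your representation yields
\[
\int_{A_F} e^{-r\cdot t}\,d\sigma_F(r)\;+\;\int_{A^F} e^{-r\cdot t}\,d\sigma_F(r),
\]
where $A_F=\{r_{F^c}=0\}$ and $A^F=\{r_F>0\}\cap([0,\infty)^n\setminus\partial_k^n)$; the first piece comes from $\psi^F$ and the second from the integral term (the factor $\prod_{i\in F}r_i$ kills the rest of $[0,\infty)^n\setminus\partial_k^n$). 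For this to equal $\int_{[0,\infty)^n}e^{-r\cdot t}\,d\sigma_F(r)=\partial^{\vec{1}_F}g(t)$ you need $\sigma_F$ to be supported on $A_F\cup A^F$, i.e.\ $\sigma_F(A_W^+)=0$ whenever $W$ neither is contained in $F$ nor contains $F$. The paper proves exactly this from your cross-consistency relation (choosing $M\subset W\setminus F$, $N\subset F\setminus W$ with $|M|=|N|$ and integrating over $A_W^+$), but you neither state nor prove it; without it your ``matching derivatives'' claim fails. Your finiteness argument for $\eta$ is also only gestured at: the paper obtains it by first bounding $\int(\prod_{i\in F}(1+r_i)^{-1})\,d\sigma_F\le g(\vec{1}_F)$ and then covering $[0,\infty)^n\setminus\partial_k^n$ by the regions $B^W=\{\min_{i\in W}r_i\ge\max_{j\notin W}r_j\}$ on which $p_k^n(r)/p_k^n(\vec{1}+r)\cdot\prod_{i\in W}(1+r_i)/r_i$ is bounded; a bare reference to Equation~\ref{ineqdifforderkradial2} does not give this.
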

 
 \begin{proof}The converse follows by the comments made before the statement of the Theorem.\\ 
 	Now, suppose that $g$    is  a Bernstein function of order $k$  in $(0, \infty)^{n}$, then by Theorem \ref{Bochnercomplsevndim} for any $F \subset \{1, \ldots, n\}$, $|F|=k$, there exists a nonnegative measure $\eta_{F}$, not necessarily finite, but for which
 	$$
 	[\partial^{\vec{1}_{F}}g](t)= \int_{[0, \infty)^{n}}e^{-r\cdot t} d\eta_{F}(r). 
 	$$
 	Now, let arbitrary disjoint subsets $L, M, N $ of $\{1, \ldots , n\}$  for which $|L\cup M|=|L\cup N|=k$. Since  $\partial^{\vec{1}_{M}}[\partial^{\vec{1}_{L \cup N}}g]=\partial^{\vec{1}_{N}}[\partial^{\vec{1}_{L \cup M}}g]$  and the representation in Theorem \ref{Bochnercomplsevndim} is unique, we get that
 	\begin{equation}\label{eq1bernsksevndim}
 		\left [\prod_{i\in M}r_{i} \right ] d\eta_{L \cup N}= \left [\prod_{i\in N}r_{i} \right ]d\eta_{L \cup M}, 
 	\end{equation}
 	multipliying both sides with $\prod_{i\in L}r_{i}$, we reach to the $\binom{n}{k}[\binom{n}{k}-1]/2$ (without repetition) equalities
 	\begin{equation}\label{eq1bernsksevndim2}
 		\left [\prod_{i\in \mathcal{F}}r_{i} \right ]d\eta_{F}= \left 	[\prod_{i\in F}r_{i} \right ] d\eta_{\mathcal{F}}, \quad |F|= |\mathcal{F}|=k
 	\end{equation}
 	by taking $L:=F\cap \mathcal{F}$, $M:= \mathcal{F}\setminus{L}$ and $N:= F\setminus{L}$.\\
 	For $W \subset \{1, \ldots, n\}$, we define the regions 
 	$$
 	A^{+}_{W}:= \{ r \in [0, \infty)^{n}, \quad r_{i}>0 \text{ for all } i \in W \text{ and } r_{i}=0  \text{ for all } i \notin W\},$$
 	where $A_{\emptyset}^{+}=\{0\}$. We affirm that if $|W|\leq k $ and $W$ is not a subset of $F$ then $\eta_{F}(A^{+}_{W})=0$. Indeed, under these hypothesis the sets $W \setminus{F}$ and $F\setminus{W}$ are nonempty, take arbitrary nonempty $M \subset W \setminus{F} $ and $N\subset F\setminus{W}$ with the only restriction that $|M|=|N|$ and define $L:= F\setminus{N}$. Then $L, M, N$ are disjoint subsets of $\{1, \ldots, n\}$ such that $|L\cup M|=|L\cup N|= |F|=k$ and by Equation  \ref{eq1bernsksevndim} we obtain that 
 	$$
 	\int_{A^{+}_{W}}\left [\prod_{i\in M}r_{i} \right ] d\eta_{F}(r)= \int_{A^{+}_{W}} \left [\prod_{i\in N}r_{i} \right ]d\eta_{L \cup M}(r)=0
 	$$
 	because $\prod_{i\in N}r_{i} $ is the zero function in $A^{+}_{W}$,	hence, $\eta_{F}(A^{+}_{W})=0$.  Since
 	$$
 	\partial_{k}^{n}= \bigcup_{|W|\leq k}A^{+}_{W}
 	$$
 	and
 	$$ 
 	A_{F}:=\{ r \in [0, \infty)^{n}, \quad r_{i}=0 \text{ for all } i  \notin F\}= \bigcup_{W \subset F}A^{+}_{W}
 	$$
 	we obtain that the measure $\eta_{F}$  in $\partial_{k}^{n}$ is supported on  its subset $A_{F}$. Analogously,  since
 	$$
 	[0, \infty)^{n} \setminus{\partial_{k}^{n}}= \bigcup_{|W|\geq k+1}A^{+}_{W}
 	$$
 	and 
 	$$ 
 	A^{F}:=\{ r \in [0, \infty)^{n} \setminus{\partial_{k}^{n}}, \quad r_{i}>0 \text{ for all } i  \in F\}= \bigcup_{|W| \geq k+1, F \subset  W}A^{+}_{W},                  
 	$$
 	and  defining the sets $L,M,N$  similarly, we obtain that if $|W|\geq  k+1 $ and $W$ does not contain $F$ then $\eta_{F}(A^{+}_{W})=0$, hence  the measure $\eta_{F}$  in $[0, \infty)^{n} \setminus{\partial_{k}^{n}}$ is supported on  its subset $A^{F}$. Thus, the  measure $\eta_{F}$  in $[0, \infty)^{n}$ is supported on  its subset $A_{F}\cup A^{F}$.\\
 	Now, we  define the nonnegative measure  $\eta$ in $[0, \infty)^{n}\setminus{\partial_{k}^{n}} $,  where in each region $A^{F}$ is defined as 
 	$$
 	d\eta(r):= \frac{p^{n}_{k}(r)}{p^{n}_{k}(\vec{1} +r) }\frac{1}{\prod_{i \in F}r_{i}}d\eta_{F} (r), \quad r \in A^{F}.  
 	$$ 
 	Due to Equation \ref{eq1bernsksevndim2}, the definition of $\eta$ in the region $A^{F}$ and the one in $A^{\mathcal{F}}$ are the same on the set  $A^{F}\cap A^{\mathcal{F}}$, hence,  $\eta$ is well defined  in $[0, \infty)^{n}\setminus{\partial_{k}^{n}} $.\\
 	Now, our aim is to prove that $\eta$ is a finite measure. To reach this conclusion,  we use  a similar  procedure in the first steps of the converse of the proof of Theorem \ref{bernssevndim}. Indeed, for any $|F|=k$, by the Fundamental Theorem of calculus applied  in the variables with index in $F$, the function  
 	$$
 	g_{F}(t):= \sum_{L \subset F}(-1)^{|L|}g(t - t_{L})
 	$$
 	where $t_{L}:= \sum_{i\in L}t_{i}e_{i} \in [0, \infty)^{n} $, admits the following integral representation
 	\begin{align*}
 		g_{F}(t)& = \int_{\prod_{i\in F}[0,t_{i}]}[\partial^{\vec{1}_{F}}g](s_{F},t_{F^{c}})d(s_{F}) =\int_{[0, \infty)^{n}}\left [\prod_{i \in F}\frac{1-e^{-r_{i}t_{i}}}{r_{i}}\right ]e^{-r_{F^{c}}\cdot t_{F^{c}}}d\eta_{F}(r),
 	\end{align*}
 	which implies that    the measure $(\prod_{ i \in F}1/(1+r_{i}))d\eta_{F}(r)
 	$ is finite in $[0, \infty)^{n}$, because by  Equation \ref{bern1ineq}, the hypothesis that $g(t)=0$ for every $t\in \partial_{k-1}^{n}$  and
 	\begin{align*}
 		\int_{[0, \infty)^{n}} \prod_{i\in F}\frac{1}{1+r_{i}}d\eta_{F}(r) &\leq \int_{[0, \infty)^{n}} \prod_{i\in F}\frac{1-e^{-r_{i}}}{r_{i}}d\eta_{F}(r)=g_{F}(\vec{1}_{F})=g(\vec{1}_{F}). 
 	\end{align*}
 	To conclude that the measure $\eta$ is finite, we define the new regions  for $|W|=k$
 	$$
 	B^{W}:= \{ r \in  [0, \infty)^{n}\setminus{\partial_{k}^{n}}, \quad \min \{r_{i}, \quad i \in W\} \geq \max \{r_{j}, \quad j \in W^{c} \} \},
 	$$
 	whose union is also $[0, \infty)^{n}\setminus{\partial_{k}^{n}}$. Since   $B^{F} \subset A^{F}$, we have that for every $r \in B^{F}$  
 	$$
 	0\leq \frac{p^{n}_{k}(r)}{p^{n}_{k}(\vec{1}+r) }\frac{ \prod_{ i \in F}(1+r_{i})}{\prod_{ i \in F}r_{i}}\leq  \frac{p^{n}_{k}(\vec{1})\prod_{ i \in F}r_{i}}{p^{n}_{k}(\vec{1}+r) }\frac{ \prod_{ i \in F}(1+r_{i})}{\prod_{ i \in F}r_{i}}\leq p^{n}_{k}(\vec{1}),
 	$$
 	thus, we conclude that $\eta$ is finite, because 
 	$$
 	\eta(B^{F}) =\int_{ B^{F} }\frac{p^{n}_{k}(r)}{p^{n}_{k}(\vec{1} +r) }\frac{1}{\prod_{i \in F}r_{i}}d\eta_{F} (r) \leq p^{n}_{k}(\vec{1})\int_{ B^{F} }\frac{1}{\prod_{i \in F}(1+r_{i})}d\eta_{F} (r)< \infty. 
 	$$
 	as	the measure $(\prod_{ i \in F}1/(1+r_{i}))d\eta_{F}(r)$ is finite in $[0, \infty)^{n}$.\\ 
 	We define the measure $\lambda _{F}$   in $[0, \infty)^{k}$ using  the canonical injection of $[0, \infty)^{k}$ into $A_{F}$  applied to  the measure  $(\prod_{\ell \in F}1/(1+r_{\ell}))d\eta_{F}(r)$.  Note that $\lambda _{F}$ is finite and nonnegative because the measure $(\prod_{ i \in F}1/(1+r_{i}))d\eta_{F}(r)
 	$ is finite and nonnegative in $[0, \infty)^{n}$. 
 	Now, from the first part of the proof, the function
 	\begin{align*}
 		h(t)&:= \sum_{|F|=k} \int_{[0, \infty)^{k}} \prod_{i \in F} \left [(1-e^{-r_{i}t_{i}})\frac{(1+r_{i})}{r_{i}} \right ]d\lambda _{F}(r_{F})  + \int_{ [0, \infty)^{n}\setminus{\partial_{k}^{n}} }  	(-1)^{k}	E_{k}^{n}( r\odot t)   \frac{1+p^{n}_{k}(r)}{p^{n}_{k}(r)}d\eta(r)
 	\end{align*}
 	is a well defined Bernstein function of order $k$ in $(0, \infty)^{n}$ that is zero in $\partial_{k-1}^{n}$. By separating it in the regions we have that for any $\mathcal{F} \subset \{1, \ldots, n\}$ with $|\mathcal{F}|=k$
 	\begin{align*}
 		[\partial^{\vec{1}_{\mathcal{F}}}h](t)&= \int_{[0, \infty)^{k}}e^{-r_{\mathcal{F}}\cdot t_{\mathcal{F}} }\left [ \prod_{i \in \mathcal{F}} (1+r_{i})   \right ] d\lambda_{\mathcal{F}}(r_{\mathcal{F}}) +\int_{[0, \infty)^{n}\setminus{\partial_{k}^{n}}}e^{-r\cdot t} \left [ \prod_{i \in \mathcal{F}} r_{i}\right ] \frac{p^{n}_{k}(\vec{1} + r)}{p^{n}_{k}(r)}d\eta(r)  \\
 		&=  \int_{A_{\mathcal{F}}}e^{-r_{\mathcal{F}}\cdot t_{\mathcal{F}} } d\eta_{\mathcal{F}}(r) +\int_{A^{\mathcal{F}}} e^{-r\cdot t} \left [ \prod_{i \in \mathcal{F}} r_{i}\right ] \frac{p^{n}_{k}(\vec{1} +r)}{p^{n}_{k}(r)}d\eta(r)\\
 		&=\int_{A_{\mathcal{F}} \cup A^{\mathcal{F}}}e^{-r\cdot t} d\eta_{\mathcal{F}}(r)=\int_{[0, \infty)^{n}}e^{-r\cdot t} d\eta_{\mathcal{F}}(r).
 	\end{align*}  
 	Hence, we proved that for any any $\mathcal{F} \subset \{1, \ldots, n\}$ with $|\mathcal{F}|=k$  and $t \in (0, \infty)^{n}$
 	$$
 	[\partial^{\vec{1}_{\mathcal{F}}}h](t)= \int_{[0, \infty)^{n}}e^{-r\cdot t}d\eta_{\mathcal{F}}(r)= [\partial^{\vec{1}_{\mathcal{F}}}g](t).
 	$$
 	To conclude the proof, note that if $p(t):= h(t)-g(t)$, then $p$ is zero at $\partial_{k-1}^{n}$ and satisfies the functional equations
 	$$
 	p_{F}(t):= \sum_{L \subset F} (-1)^{|L|}p(t -t_{L})= \int_{\bigtimes_{i=1}^{n}[0, t_{i}]}\partial^{\vec{1}_{\mathcal{F}}}[p](s)ds=  0, \quad |F|=k, \quad t \in [0, \infty)^{n}.
 	$$
 	Since $p_{F}(t_{F})= p(t_{F})$, we obtain that  $p$ is zero at $\partial_{k}^{n}$. The rest of the proof follows by recursion, as if  $p$ is zero at $\partial_{\ell}^{n}$, then for any subset $O\subset \{1 \ldots, n\}$ with $|O|=\ell+1$ we have that $p_{F}(t_{O})= p(t_{O})$ whenever $F \subset O$. \\ 
 	The representation is unique, because all information of the measures $\eta$ and  $\lambda_{F}$  came from the measures $\eta_{F}$, which are unique due to Theorem \ref{Bochnercomplsevndim}, hence the representation is also unique in this context. \end{proof}

 \begin{remark}\label{rembernkn}
 	The integrand  in Equation \ref{berskex} cannot be continuously extended to $[0, \infty)^{n} \setminus{\partial_{k-2}^{n}}$, when $n> k \geq 1$. Indeed, let $r=(r_{1}, \ldots, r_{k-1}, r_{k}, 0, \ldots, 0) \in (0, \infty)^{n}$, then 
 	$$
 	\lim_{r_{k} \to 0}(-1)^{k}E_{k}^{n}( r\odot t)  \frac{1+p^{n}_{k}(r)}{p^{n}_{k}(r)}= t_{k}\frac{1+ \prod_{i=1}^{k-1}r_{i}}{\prod_{i=1}^{k-1}r_{i}} \prod_{i=1} ^{k-1}(1-e^{-r_{i}t_{i}}).  
 	$$
 	On the other hand, for $r^{\prime}=(r_{1}, \ldots, r_{k-1},0,r_{k+1}, 0, \ldots, 0) \in (0, \infty)^{n}$, then 
 	$$
 	\lim_{r_{k+1} \to 0} (-1)^{k}E_{k}^{n}(r^{\prime}\odot t)  \frac{1+p^{n}_{k}(r^{\prime})}{p^{n}_{k}(r^{\prime})}=t_{k+1}\frac{1+ \prod_{i=1}^{k-1}r_{i}}{\prod_{i=1}^{k-1}r_{i}} \prod_{i=1} ^{k-1}(1-e^{-r_{i}t_{i}}). 
 	$$
 	However, the integrand in Equation \ref{berskex} can be extended to $[0, \infty)^{n} \setminus{\partial_{k-1}^{n}}$, but we use the version in  Equation \ref{berskex} as the expression for the representation  of Bernstein functions of order $k$  in $(0, \infty)^{n}$ are considerably simplified. For the same reason, we are not considering the case $n=k$ in Theorem  \ref{bernsksevndim} , as the representation in Theorem \ref{bernssevndim} is more simpler then the approach of   extending to $[0, \infty)^{n} \setminus{\partial_{k-1}^{k}}$  the Equation \ref{berskex}.   
 \end{remark}

 Next result provides the most important result in this text, the characterization of the PDI$_{k,n}^{\infty}$ functions. 
 
 \begin{theorem}\label{bernsksevndimpart3} Let $n> k\geq 1 $, $g:[0, \infty)^{n} \to \mathbb{R}$ be a continuous function such that  $g(t)=0$ for every $t\in \partial_{k-1}^{n}$. The following conditions are equivalent:
 	\begin{enumerate}
 		\item [$(i)$] For any $d\in \mathbb{N}$ and discrete  measures  $\mu_{i}$ in $\mathbb{R}^{d}$, $1\leq i \leq n$, and with the restriction that $|i, \quad \mu_{i}(\mathbb{R}^{d})=0| \geq k $,  it holds that
 		$$
 		\int_{(\mathbb{R}^{d})_{n}}\int_{ (\mathbb{R}^{d})_{n}}(-1)^{k}g(\|x_{1}-y_{1}\|^{2}, \ldots, \|x_{n} - y_{n}\|^{2})d[\bigtimes_{i=1}^{n}\mu_{i}](x)d[ \bigtimes_{i=1}^{n}\mu_{i}](y)\geq 0.
 		$$
 		\item [$(ii)$] For any $d\in \mathbb{N}$ and discrete probability $P$ in $(\mathbb{R}^{d})_{n}$,  it holds that
 		$$
 		\int_{(\mathbb{R}^{d})_{n}}\int_{ (\mathbb{R}^{d})_{n}}g(\|x_{1}-y_{1}\|^{2}, \ldots, \|x_{n} - y_{n}\|^{2})d[\Lambda_{k}^{n}[P] ](x)d[\Lambda_{k}^{n}[P]](y)\geq 0.
 		$$
 		\item [$(iii)$]  The function $g$ is PDI$_{k,n}^{\infty}$.
 		\item [$(iv)$]  The function $g$ can  be represented as
 		\begin{align*}
 			g(t) &=  \sum_{|F|=k } \psi^{F}(t_{F}) +\int_{[0, \infty)^{n}\setminus{\partial_{k}^{n}}}	(-1)^{k}	E_{k}^{n}( r \odot t)  \frac{p^{n}_{k}(r + \vec{1})}{p^{n}_{k}(r)}d\eta(r)
 		\end{align*}
 		where  the measure $\eta \in \mathfrak{M}([0,\infty)^{n}\setminus{\partial_{k}^{n}})$ is  nonnegative and  the functions $\psi^{F}: [0, \infty)^{k} \to \mathbb{R}$ are continuous Bernstein functions of order $k$ in $(0, \infty)^{k}$ that are zero on the set  $\partial_{k-1}^{k}$.  Further, the representation is unique.
 		\item[$(v)$]The function $g$  is a Bernstein function of order $k$ in $(0, \infty)^{n}$.
 	\end{enumerate}	
 \end{theorem}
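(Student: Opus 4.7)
The plan is to close the ring of equivalences as follows: (iv)$\Leftrightarrow$(v) is exactly Theorem \ref{bernsksevndim}, so I would prove (iv)$\Rightarrow$(iii)$\Rightarrow$(ii)$\Rightarrow$(i)$\Rightarrow$(iv), with the last implication carrying all the weight. The uniqueness of the measure and of the functions $\psi^F$ in (iv) will be inherited directly from the uniqueness in Theorem \ref{bernsksevndim}.

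For (iv)$\Rightarrow$(iii), I would treat the two kinds of summands separately. For a term $\psi^F(t_F)$ with $|F|=k$, given $\mu \in \mathcal{M}_k((\mathbb{R}^d)_n)$, a direct pigeonhole check shows that the marginal $\mu_F$ lies in $\mathcal{M}_k(\mathds{X}_F)$; the double integral collapses to the corresponding one for $\psi^F$ over $(\mathbb{R}^d)_k$, which is nonnegative by the $n=k$ case of Theorem \ref{basicradialndim} applied to $\psi^F$. For the integral term I expand $(-1)^k E_k^n(r\odot t)$ via its defining sum: each summand indexed by $|F|=j<k$ depends on only $j<k$ of the coordinates, hence the property recorded in Equation \ref{integmu0n} makes those contributions vanish against $\mu$. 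The surviving piece reduces, via Fubini, to $\int\int \prod_{i=1}^n e^{-r_i \|x_i-y_i\|^2}\, d\mu\, d\mu\geq 0$, since the Kronecker product of Gaussian PD kernels is PD. For (iii)$\Rightarrow$(ii), I just quote Theorem \ref{generallancaster}(i) giving $\Lambda_k^n[P]\in \mathcal{M}_k((\mathbb{R}^d)_n)$. For (ii)$\Rightarrow$(i), given $\mu_1,\ldots,\mu_n$ with $|\{i:\mu_i(\mathbb{R}^d)=0\}|\geq k$, Theorem \ref{genlancastercartesianproduct} furnishes a probability $P$ and $M\geq 0$ with $\bigtimes_{i=1}^n\mu_i = M(-1)^n\Lambda_k^n[P]$, and the claim follows upon squaring the constant.

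The hard direction is (i)$\Rightarrow$(iv), which I would establish by induction on $n\geq k$. The base case $n=k$ is precisely Theorem \ref{basicradialndim}. For the inductive step from $n$ to $n+1$, I mirror the strategy used to pass from Theorem \ref{reprcondneg} to the $n$-variable setting in Theorem \ref{basicradialndim}: for fixed $s\geq 0$, test measures of the form $\mu_{n+1}=\sum_i c_i\delta_{w_i}$ with $\|w_i-w_j\|^2$ realising arbitrary nonnegative numbers reveal that for any such choice, the function $t\mapsto \sum_{i,j}c_ic_j\, g(t,\|w_i-w_j\|^2)$ on $[0,\infty)^n$ satisfies condition (i) with $n$ variables and index $k$; in particular, plugging in a single Dirac mass shows $t\mapsto g(t,s)$ does so for each $s$. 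By the induction hypothesis there is a unique representation
$$g(t,s) = \sum_{\substack{|F|=k \\ F\subset\{1,\ldots,n\}}} \psi^F_s(t_F) + \int_{[0,\infty)^n\setminus\partial_k^n}(-1)^k E_k^n(r\odot t)\frac{p_k^n(\vec{1}+r)}{p_k^n(r)}\, d\eta_s(r).$$
I would then use the test-measure argument to show that for each Borel $A$, the map $s\mapsto \eta_s(A)$ defines a CND radial kernel on every $\mathbb{R}^d$ that vanishes at $s=0$; Theorem \ref{reprcondneg} then gives it a Schoenberg representation, and Lemma \ref{techsimple} upgrades the resulting bimeasure to the nonnegative $\eta\in \mathfrak{M}([0,\infty)^{n+1}\setminus\partial_k^{n+1})$ appearing in (iv). A parallel argument applied to each coefficient-function $s\mapsto \psi^F_s$ identifies, via the same Schoenberg/CND reduction in the variable $s$, both a piece that gets absorbed into the measure $\eta$ (through the regions $A^F$ of the proof of Theorem \ref{bernsksevndim}) and the new Bernstein-of-order-$k$ functions $\psi^{F\cup\{n+1\}}$ indexed by the $\binom{n}{k-1}$ subsets containing $n+1$.

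The main obstacle is the bookkeeping in this inductive step. In Theorem \ref{basicradialndim} only the integral part appears (the sum $\sum_{|F|=k}\psi^F$ is empty there because at $n=k$ the integration domain is empty), so one tracks a single $s$-dependent measure $\eta_s$. Here I must simultaneously control the $\binom{n}{k}$ auxiliary functions $\psi^F_s$ together with the measure $\eta_s$, prove that their $s$-dependencies are all Bernstein of order $1$ in $s$, and verify that the resulting pieces reassemble precisely as the representation in (iv) for $n+1$ variables, with the new $\binom{n+1}{k}-\binom{n}{k}=\binom{n}{k-1}$ functions $\psi^F$ with $F\ni n+1$ emerging from the $s$-dependence and not leaking into the bulk measure; this will require invoking the region-splitting across the sets $A^F$ from the proof of Theorem \ref{bernsksevndim} together with the uniqueness clause of the induction hypothesis.
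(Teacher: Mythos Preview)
Your treatment of the easy directions matches the paper: (iv)$\Leftrightarrow$(v) via Theorem \ref{bernsksevndim}, (iv)$\Rightarrow$(iii) by expanding $E_k^n$ and invoking Equation \ref{integmu0n}, (iii)$\Rightarrow$(ii) via Theorem \ref{generallancaster}, and (ii)$\Rightarrow$(i) via Theorem \ref{genlancastercartesianproduct}. The divergence is entirely in the hard direction.

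There is a genuine gap in your inductive step for (i)$\Rightarrow$(iv). You want to freeze $s=t_{n+1}$ and apply the induction hypothesis to $t\mapsto g(t,s)$ on $[0,\infty)^{n}$. That function does satisfy condition (i) at level $n$, but it does \emph{not} satisfy the standing hypothesis of the theorem at level $n$: if $t\in\partial_{k-1}^{n}$ and $s>0$, then $(t,s)$ has at most $k$ nonzero coordinates, hence lies in $\partial_{k}^{n+1}$ but not in $\partial_{k-1}^{n+1}$, so $g(t,s)$ need not vanish. The representation in (iv) at level $n$ is only available for functions vanishing on $\partial_{k-1}^{n}$, so you cannot invoke it as written. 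The same obstruction hits your test-measure function $h(t)=\sum_{i,j}c_ic_j\,g(t,\|w_i-w_j\|^2)$. You could try to repair this via Lemma \ref{PDInsimplik}, but that subtracts a linear combination of the $g(t_F,s)$ with $|F|\le k-1$, and tracking how those correction terms, the $\binom{n}{k}$ functions $\psi^F_s$, and the measure $\eta_s$ all interact in $s$ and then reassemble into the $(n{+}1)$-variable representation is exactly the ``bookkeeping'' you flag --- and it is not carried out.

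The paper avoids this obstacle by proving (i)$\Rightarrow$(v) rather than (i)$\Rightarrow$(iv), and by splitting the variables differently. Instead of peeling off one coordinate, for each $F$ with $|F|=k$ it forms $g_F(t)=\sum_{L\subset F}(-1)^{|L|}g(t-t_L)$, freezes the $k$ variables $t_F=s\in[0,\infty)^{k}$, and observes that $t_{F^c}\mapsto g_F(t_{F^c},s)$ satisfies condition (i) of Theorem \ref{PDseveralvariables} in the remaining $n-k$ variables. That theorem is the PD (order $0$) case and carries \emph{no} boundary-vanishing hypothesis, so the obstruction above disappears; one gets $g_F(t_{F^c},s)=\int_{[0,\infty)^{n-k}}e^{-r\cdot t_{F^c}}\,d\eta_s(r)$ with unique nonnegative $\eta_s$. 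The $s$-dependence is then handled by Lemma \ref{techsimple} (PDI$_k^\infty$ in the $k$ frozen variables), yielding directly that $\partial^{\vec{1}_F}g=\partial^{\vec{1}_F}g_F$ is completely monotone. Induction on $n$ is used only for the soft fact that the lower-dimensional restrictions $g(t_L)$, $|L|<n$, are $C^\infty$. This route sidesteps both the boundary issue and the heavy reassembly of $\psi^F$-pieces that your plan would require.
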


 \begin{proof}The equivalence between  relation $(iv)$ and $(v)$ is proved in Theorem \ref{bernsksevndim}.\\
 	Relation $(iv)$  implies relation $(iii)$ because  due to Equation \ref{exponentialknmeas} and Equation \ref{integmu0n}, for any $\mu \in  \mathcal{M}_{k}((\mathbb{R}^{d})_{n})$
 	\begin{align*}
 		&\int_{(\mathbb{R}^{d})_{n}}\int_{ (\mathbb{R}^{d})_{n}}(-1)^{k}g(\|x_{1}-y_{1}\|^{2}, \ldots, \|x_{n} - y_{n}\|^{2})d\mu(x)d \mu(y)\\
 		&= \int_{[0, \infty)^{n}\setminus{\partial_{k}^{n}}}  \left [\int_{(\mathbb{R}^{d})_{n}}\int_{ (\mathbb{R}^{d})_{n}}e^{-\sum_{i=1}^{n}r_{i}\|x_{i}-y_{i}\|^{2}} d\mu(x)d \mu(y) \right ]\frac{p^{n}_{k}(r + \vec{1})}{p^{n}_{k}(r)}d\eta(r) \\
 		&+ 	\sum_{|F|=k } \int_{(\mathbb{R}^{d})_{n}}\int_{ (\mathbb{R}^{d})_{n}} (-1)^{k}\psi^{F}((\|x_{1}-y_{1}\|^{2}, \ldots, \|x_{n} - y_{n}\|^{2})_{F})d\mu_{F}(x_{F}) d\mu_{F}(y_{F}) \geq 0 
 	\end{align*}		
 	where $\mu_{F}(\prod_{i \in F}A_{i}):= \mu (\prod_{i=1}^{n}A_{i})$, where $A_{i}=\mathbb{R}^{d}$ for $i \notin F$ and this measure belongs to $ \mathcal{M}_{k}((\mathbb{R}^{d})_{k})$. 	 	
 	Relation $(iii)$ implies relation $(ii)$ by the property $i)$ in Theorem \ref{generallancaster}.
 	Relation $(ii)$ implies relation $(i)$ by Lemma \ref{genlancastercartesianproduct}.	\\	
 	We conclude the proof by showing that relation $(i)$ implies relation $(v)$. Indeed, for a fixed $k\geq 1$ the proof  is done by induction on $n$, where the initial case is when $n=k$ proved in Theorem \ref{basicradialndim}. Then, we assume that the  result holds for all possible values of $n^{\prime}< n$ with the same fixed $k$, more precisely a consequence of this fact,   that  the functions 
 	$$
 	(t_{1}, \ldots, t_{n}) \to g(t_{L} ) \in \mathbb{R}, \quad L\subset \{1, \ldots, n\}, \quad |L|< n
 	$$
 	are elements of  $C^{\infty}((0, \infty)^{n})$. By the chain rule, it is sufficient to prove that  $u \in [0, \infty)^{|L|} \to g(u_{L} ) $ is an element of $C^{\infty}((0, \infty)^{|L|}))$. When $|L|<k$ these are the zero functions.  When $|L|=k$, by taking  $\mu_{i}:=\delta_{\vec{0}}$ for $i \notin L$ in the hypothesis of relation $(i)$, we obtain that the function $u\in [0, \infty)^{k} \to g(u_{L} ) $ satisfy the requirements of relation $(i)$  in Theorem  \ref{basicradialndim}. Similarly, if  $k < |L| < n$, by taking  $\mu_{i}:=\delta_{\vec{0}}$ for $i \notin L$ in the hypothesis of relation $(i)$, we obtain that the function $u\in [0, \infty)^{|L|} \to g(u_{L} ) $ satisfy the requirements of relation $(i)$ in Theorem  \ref{bernsksevndimpart3} with $|L|$ variables.\\
 	Now, our final objective is to prove that for any  $F\subset \{1, \ldots, n\}$ with  $|F|=k$, the function 
 	$$
 	g_{F}(t):= \sum_{L \subset F}(-1)^{|L|}g(t - t_{L})
 	$$
 	admits  the following integral representation
 	\begin{align*}
 		g_{F}(t)&  =\int_{[0, \infty)^{n}}\left [\prod_{i \in F}(1-e^{-r_{i}t_{i}})\frac{1+r_{i}}{r_{i}}\right ]e^{-r_{F^{c}}\cdot t_{F^{c}}}d\eta_{F}(r),
 	\end{align*}
 	for some nonnegative $\eta_{F} \in \mathfrak{M}([0, \infty)^{n})$. Note that once proved this representation,   we obtain that the function $	g_{F} $ is an element of $C^{\infty}((0, \infty)^{n }))$  and
 	$$
 	[\partial^{\vec{1}_{F}}g_{F}](t)= \int_{[0, \infty)^{n}}\left [\prod_{i \in F}e^{-r_{i}t_{i}}(1+r_{i})\right ]e^{-r_{F^{c}}\cdot t_{F^{c}}}d\eta_{F}(r) 
 	$$ 
 	which is a completely monotone function by Theorem \ref{Bochnercomplsevndim}.  This concludes the proof as we obtain that $g$ is an element of $C^{\infty}((0, \infty)^{n }))$  (because all other functions in the definition of $g_{F}$  are   elements of $C^{\infty}((0, \infty)^{n }))$) and that $g$   is a Bernstein function of order $k$ in $(0, \infty)^{n}$ because $[\partial^{\vec{1}_{F}}g ](t)= [\partial^{\vec{1}_{F}}g_{F}](t)$.\\
 	We prove the integral representation for the case  $F= \{n-k+1, \ldots, n\}$, being the others an adaptation in the notation. We make a slightly change in the notation for $g_{F}$ to have a clear distinction between the role of variables in $F$ and  the ones in $F^{c}$  (a similar approach is done in the proof of Theorem \ref{PDseveralvariables})
 	$$
 	g_{F}(t, s):= \sum_{L \subset F}(-1)^{k-|L|}g(t, s_{L}), \quad t=(t_{1}, \ldots, t_{n-k}) \in [0, \infty)^{n-k}, s=(s_{1}, \ldots, s_{k}) \in   [0, \infty)^{k}.
 	$$
 	For any fixed $s \in   [0, \infty)^{k}$, the function $ t \in [0, \infty)^{n-k}\to g_{F}(t, s) $ is continuous and  satisfies the requirements for relation $(i)$ in Theorem \ref{PDseveralvariables}, by fixing the $k$ measures  $\mu_{n-k+i}:=[\delta_{x_{n-k+i}} -  \delta_{y_{n-k+i}}]/2$, for arbitrary $x_{n-k+i}, y_{n-k+i} \in \mathbb{R}^{d}$ such that $\|x_{n-k+i} - y_{n-k+i}\|^{2}=s_{i}$, for  $1\leq i \leq k$. Thus, by the same Theorem, for any fixed $s \in [0, \infty)^{k}$ there is an unique  nonnegative measure $\eta_{s} \in \mathfrak{M}([0, \infty)^{n-k})$ for which
 	$$
 	g_{F}(t, s) = \int_{[0, \infty)^{n-k}}e^{-r\cdot t}d\eta_{s}(r).
 	$$
 	We affirm that the collection of measures  $\eta_{s}$,  $s \in [0, \infty)^{k}$,  satisfies all the conditions of Lemma \ref{techsimple}. Indeed,  note that $\eta_{s}$ is the zero measure for any $s \in \partial_{k-1}^{k}$, because     for any $s \in [0, \infty)^{k}$
 	$$
 	g(\vec{0}, s)= g_{F}(\vec{0}, s)= \int_{[0, \infty)^{n-k}}d\eta_{s}(r)= \eta_{s}([0, \infty)^{n-k}),   
 	$$
 	and $(\vec{0}, s) \in   \partial_{k-1}^{n}$ if $s \in \partial_{k-1}^{k}$. This equality also implies the continuity of the function $s \to \eta_{s}([0, \infty)^{n-k})$.\\
 	Using Theorem \ref{basicradialndim} and the hypothesis, for any $d \in \mathbb{N}$ and $\mu\in  \mathcal{M}_{k}((\mathbb{R}^{d})_{k})$, the function
 	$$
 	t  \to \int_{(\mathbb{R}^{d})_{k}}\int_{ (\mathbb{R}^{d})_{k}}(-1)^{k}g(t, \|x_{n+1-k}- y_{n+1-k} \|^{2}, \ldots, \|x_{n}- y_{n} \|^{2})d\mu(x)d \mu(y)
 	$$
 	which by Equation \ref{integmu0n} can be rewritten as
 	$$
 	\int_{(\mathbb{R}^{d})_{k}}\int_{ (\mathbb{R}^{d})_{k}}(-1)^{k}g_{F}(t, \|x_{n+1-k}- y_{n+1-k} \|^{2}, \ldots, \|x_{n}- y_{n} \|^{2})d\mu(x)d \mu(y)
 	$$
 	is continuous and satisfies relation $(i)$ in Theorem \ref{PDseveralvariables}.  Since the representation in Theorem  \ref{PDseveralvariables} is unique, we obtain that for any $A \in \mathscr{B}([0, \infty )^{n-k})$ the function $t \in [0, \infty)^{k} \to \eta_{t}(A)$  defines  an PDI$_{k}^{\infty}$ radial kernel in all Euclidean spaces, Lemma \ref{techsimple} concludes that the desired integral representation is valid. \end{proof}

 As a direct consequence of the previous Theorem, we obtain that on a function $g$ that satisfies the requirements and the equivalences in Theorem \ref{bernsksevndimpart3}, its growth is delimited by all the  values of the function with $k$ variables.

 \begin{corollary}\label{growthpdiknvar} Let $n> k\geq 1 $ $g:[0, \infty)^{n} \to \mathbb{R}$ be a continuous function such that  $g(t)=0$ for every $t\in \partial_{k-1}^{n}$ an that satisfies the equivalences in Theorem \ref{bernsksevndimpart3}. Then,  $g$ nonnegative and is increasing, in the sense that  $g(t_{\vec{2}}) \geq  g(t_{\vec{1}}) $ if $t_{\vec{2}} - t_{\vec{1}} \in [0, \infty)^{n}$. Also, it holds that
 	$$
 	\binom{n}{k}^{-1} \sum_{|F|=k}g(t_{F}) \leq g(t) \leq  \sum_{|F|=k}g(t_{F}), \quad t \in [0, \infty)^{n}.
 	$$

 \end{corollary}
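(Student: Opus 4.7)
The plan is to start from the integral representation in Theorem \ref{bernsksevndimpart3}$(iv)$:
\[
g(t) \;=\; \sum_{|F|=k} \psi^{F}(t_F) \;+\; \int_{[0,\infty)^n \setminus \partial_{k}^{n}} (-1)^{k} E_{k}^{n}(r\odot t)\,\frac{p_{k}^{n}(\vec{1}+r)}{p_{k}^{n}(r)}\,d\eta(r),
\]
and then compute $\sum_{|F|=k} g(t_F)$ term by term so that the two expressions can be compared directly under the integral. The first step would be to isolate the key identity
\[
(-1)^{k} E_{k}^{n}(r \odot t_F) \;=\; \prod_{i \in F}(1-e^{-r_i t_i}),
\]
which is obtained by applying the equality $H_{k}^{n+1}(a,1) = H_{k}^{n}(a)$ proved inside Lemma \ref{ineqorderk} to eliminate, one at a time, the $n-k$ coordinates where $(r\odot t_F)_i = 0$ (so $e^{-(r\odot t_F)_i} = 1$). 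Parallel to this, the $\psi$-part of $g(t_F)$ collapses to a single term because $\psi^{F'}((t_F)_{F'}) = 0$ whenever $F' \neq F$: indeed $(t_F)_{F'}$ then has a zero entry on the non-empty set $F' \setminus F$, placing it in $\partial_{k-1}^{k}$.

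Once these two observations are combined, summing over $|F|=k$ yields
\[
\sum_{|F|=k} g(t_F) \;=\; \sum_{|F|=k} \psi^{F}(t_F) \;+\; \int_{[0,\infty)^n \setminus \partial_{k}^{n}} p_{k}^{n}\!\left(\vec{1}-e^{-r\odot t}\right) \frac{p_{k}^{n}(\vec{1}+r)}{p_{k}^{n}(r)}\,d\eta(r),
\]
since $\sum_{|F|=k}\prod_{i\in F}(1-e^{-r_i t_i}) = p_{k}^{n}(\vec{1}-e^{-r\odot t})$ by definition of the elementary symmetric polynomials. The sandwiching inequality in the corollary then reduces to the pointwise inequality of Lemma \ref{ineqorderk} Equation \ref{ineqorderk2eq1}, applied with $a = e^{-r\odot t} \in [0,1]^{n}$:
\[
\binom{n}{k}^{-1} p_{k}^{n}\!\left(\vec{1}-e^{-r\odot t}\right) \;\leq\; (-1)^{k} E_{k}^{n}(r\odot t) \;\leq\; p_{k}^{n}\!\left(\vec{1}-e^{-r\odot t}\right).
\]
Integrating against the nonnegative weight $\frac{p_{k}^{n}(\vec{1}+r)}{p_{k}^{n}(r)}\,d\eta(r)$ and matching the $\psi^{F}$-terms gives the upper bound $g(t)\leq \sum_{|F|=k}g(t_F)$ directly; for the lower bound, the extra leftover contribution $(1-\binom{n}{k}^{-1})\sum_{|F|=k}\psi^{F}(t_F)$ is nonnegative because each $\psi^{F}$, being PDI$_{k}^{\infty}$ (the $n=k$ case of Theorem \ref{basicradialndim}) and vanishing on $\partial_{k-1}^{k}$, is nonnegative by the remark immediately following Lemma \ref{PDInsimpli}.

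Nonnegativity of $g$ is then immediate since both summands in the representation are nonnegative. For coordinatewise monotonicity, I would argue at the level of the representation: each $\psi^{F}(t_F)$ is monotone in every $t_i$ with $i \in F$ by Equation \ref{consineqexp} applied in the $n=k$ setting (and is constant in $t_i$ for $i \notin F$), while the integrand $(-1)^{k} E_{k}^{n}(r\odot t)$ is nondecreasing in each $t_i$ because $(-1)^{k} H_{k}^{n}(a)$ is nonincreasing in each $a_i$ on $[0,1]^n$ — a fact that was established in the affine-in-$a_{n+1}$ analysis in the proof of Lemma \ref{ineqorderk} — composed with $a_i = e^{-r_i t_i}$, which is nonincreasing in $t_i$. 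The only delicate step is the bookkeeping in the first paragraph, namely verifying that $\sum_{|F|=k} g(t_F)$ produces no cross-terms from the $\psi^{F'}$ family; once that is in hand, everything else reduces to invoking Lemma \ref{ineqorderk} and integrating.
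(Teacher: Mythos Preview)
Your proof is correct and follows essentially the same route as the paper: compute $g(t_F)$ from the representation, observe that the $\psi^{F'}$ cross-terms vanish, sum over $|F|=k$ to produce $p_{k}^{n}(\vec{1}-e^{-r\odot t})$ under the integral, and then sandwich against $(-1)^{k}E_{k}^{n}(r\odot t)$ via Equation~\ref{ineqorderk2eq1}. The only point of departure is the monotonicity argument: the paper invokes the explicit derivative formula in Equation~\ref{firstexamplepdiLform} to get $\partial^{e_{i}}g\geq 0$ on $(0,\infty)^{n}$ and then extends by continuity, whereas you argue directly that $(-1)^{k}H_{k}^{n}(a)$ is nonincreasing in each coordinate on $[0,1]^{n}$ (extracted from the affine-in-$a_{n+1}$ step in Lemma~\ref{ineqorderk}) and compose with $a_{i}=e^{-r_{i}t_{i}}$; both arguments are equivalent and equally short.
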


 \begin{proof}As a consequence of Equation \ref{firstexamplepdiLform}, the function  $\partial^{e_{i}}g$ is a  nonnegative in $(0, \infty)^{n}$ for every $1\leq i \leq n$, thus $g$ is an increasing function when restricted to the domain  $(0, \infty)^{n}$.  By the  continuity of $g$ in $[0, \infty)^{n}$, we obtain that it is also increasing   in the whole domain $[0, \infty)^{n}$. In particular, this implies that $g$ is nonnegative as $g(t)\geq g(\vec{0})=0$.\\
 	For the second part, since 	
 	$$
 	g(t_{F}) = \psi^{F}(t_{F}) +  \int_{[0, \infty)^{n}\setminus{\partial_{k}^{n}}} \left [\prod_{i \in F}(1-e^{-r_{i}t_{i}}) \right ] \frac{p^{n}_{k}(\vec{1} + r)}{p^{n}_{k}(r)}d\eta(r_{1},\ldots,  r_{n}),
 	$$	
 	Equation \ref{ineqorderk2eq1}  implies that
 	\begin{align*}   
 		&\sum_{ |F|=k } g(t_{F})
 		=  \sum_{|F|=k } \psi^{F}(t_{F})+ \int_{[0, \infty) ^{n}\setminus{\partial_{k}^{n}}}  \sum_{ |F|=k }\left [\prod_{i \in F}(1-e^{-r_{i}t_{i}}) \right ] \frac{p^{n}_{k}(\vec{1} + r)}{p^{n}_{k}(r)}d\eta(r_{1},\ldots,  r_{n}) \\
 		&=\sum_{|F|=k } \psi^{F}(t_{F})+ \int_{[0, \infty) ^{n}\setminus{\partial_{k}^{n}}}  p_{k}^{n}(1-e^{-r_{1}t_{1}}, \ldots,1-e^{-r_{n}t_{n}} ) \frac{p^{n}_{k}(\vec{1} + r)}{p^{n}_{k}(r)}d\eta(r_{1},\ldots,  r_{n}) \\
 		&\leq  \binom{n}{k} \left [  \sum_{|F|=k } \psi^{F}(t_{F})+ \int_{[0, \infty) ^{n}\setminus{\partial_{k}^{n}}}  (-1)^{k}E_{k}^{n}(r\odot t) \frac{p^{n}_{k}(\vec{1} + r)}{p^{n}_{k}(r)}d\eta(r_{1},\ldots,  r_{n})   \right ] = \binom{n}{k} g(t).
 	\end{align*}
 	The other inequality also follows from Equation \ref{ineqorderk2eq1} as
 	
 	\begin{align*}   
 		g(t) & \leq   	\sum_{|F|=k } \psi^{F}(t_{F}) + \int_{[0, \infty) ^{n}\setminus{\partial_{k}^{n}}}  p_{k}^{n}(1-e^{-r_{1}t_{1}}, \ldots,1-e^{-r_{n}t_{n}} ) \frac{p^{n}_{k}(\vec{1} + r)}{p^{n}_{k}(r)}d\eta(r_{1},\ldots,  r_{n})  \\
 		&=\sum_{ |F|=k } g(t_{F}).
 \end{align*}\end{proof}

 As $\Lambda_{2}^{n}[P]= P - \bigtimes_{i=1}^{n}P_{i}$, that is, of a independence test, we make the statement of Theorem \ref{bernsksevndimpart3} explicit for the case $k=2$ above.

 \begin{theorem}\label{berns2sevndimpart2}  Let $n \geq 2$ and $g : [0, \infty)^{n} \to \mathbb{R}$ be a continuous function such that $g(t)=0$ for every $t \in \partial^{n}_{1}$. The following conditions are equivalent:
 	\begin{enumerate}
 		\item [$(i)$] For any $d\in \mathbb{N}$ and discrete  measures  $\mu_{i}$ in $\mathbb{R}^{d}$, $1\leq i \leq n$, and with the restriction that for at least two of those measures $\mu_{i}(\mathbb{R}^{d})=0$,  it holds that
 		$$
 		\int_{(\mathbb{R}^{d})_{n}}\int_{(\mathbb{R}^{d})_{n}}g(\|x_{1}-y_{1}\|^{2}, \ldots, \|x_{n} - y_{n}\|^{2})d[\bigtimes_{i=1}^{n}\mu_{i}](x)d[ \bigtimes_{i=1}^{n}\mu_{i}](y)\geq 0.
 		$$
 		\item [$(ii)$] For any $d\in \mathbb{N}$ and discrete probability $P$ in $(\mathbb{R}^{d})_{n}$, with marginals $P_{i}$ in $\mathbb{R}^{d}$, it holds that
 		$$
 		\int_{(\mathbb{R}^{d})_{n}}\int_{ (\mathbb{R}^{d})_{n}}g(\|x_{1}-y_{1}\|^{2}, \ldots, \|x_{n} - y_{n}\|^{2})d[P - \bigtimes_{i=1}^{n}P_{i}](x)d[P - \bigtimes_{i=1}^{n}P_{i}](y)\geq 0.
 		$$
 		\item [$(iii)$]  The function $g$ is PDI$_{2, n}^{\infty}$.
 		\item [$(iv)$]  The function  $g$ can  be represented as
 		\begin{align*}
 			g(t)  &=  \sum_{i <  j } \psi^{i,j}(t_{i},t_{j}) +\int_{[0, \infty)^{n}\setminus{\partial_{2}^{n}}}  \left (e^{-r\cdot t} - \sum_{i=1}^{n}e^{-r_{i}t_{i}}+n-1 \right ) \frac{p^{n}_{2}(\vec{1} + r)}{p^{n}_{2}(r)}d\eta(r_{1},\ldots,  r_{n})
 		\end{align*}
 		where  the measure $\eta \in \mathfrak{M}([0,\infty)^{n}\setminus{\partial_{2}^{n}})$ is  nonnegative and  the functions $\psi^{i,j}: [0, \infty)^{2} \to \mathbb{R}$ are Bernstein functions of order $2$ that are zero in $\partial_{1}^{2}$.  Further, the representation is unique.
 		\item[$(v)$]The function $g$  is a Bernstein function of order $2$ in $(0, \infty)^{n}$.
 	\end{enumerate}	
 \end{theorem}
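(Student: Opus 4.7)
The plan is to derive Theorem \ref{berns2sevndimpart2} as the explicit $k=2$ specialization of Theorem \ref{bernsksevndimpart3}, together with the edge case $n=k=2$ handled by Theorem \ref{basicradialndim}. For $n > 2$, the equivalences $(i)\Leftrightarrow(iii)\Leftrightarrow(v)$ are the direct content of Theorem \ref{bernsksevndimpart3} with $k=2$: the cardinality condition $|\{i:\mu_i(\mathbb{R}^d)=0\}|\geq k$ becomes ``at least two,'' matching $(i)$, and the Bernstein order-$k$ property becomes order $2$ in $(v)$. Condition $(ii)$ translates from the general form via the identity
\begin{equation*}
\Lambda_2^n[P] = P - \bigtimes_{i=1}^{n}P_i,
\end{equation*}
which is recorded explicitly in the paragraph immediately preceding Theorem \ref{generallancaster}. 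Uniqueness of the decomposition in $(iv)$ is inherited from Theorem \ref{bernsksevndimpart3}.

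The main calculation is making the integrand in $(iv)$ explicit. From the definitions
\begin{equation*}
E_k^n(s) = H_k^n(e^{-s_1},\ldots,e^{-s_n}), \qquad H_k^n(r) = p_n^n(r) + \sum_{j=0}^{k-1}(-1)^{k-j}\binom{n-j-1}{n-k}p_j^n(r),
\end{equation*}
setting $k=2$ gives $H_2^n(r) = p_n^n(r) - p_1^n(r) + (n-1)$, since the $j=0$ term contributes $\binom{n-1}{n-2} = n-1$ and the $j=1$ term contributes $-p_1^n(r)$. Substituting $r_i \mapsto e^{-r_i t_i}$ yields
\begin{equation*}
(-1)^{2}E_2^n(r\odot t) = e^{-r\cdot t} - \sum_{i=1}^{n}e^{-r_i t_i} + (n-1),
\end{equation*}
which is exactly the integrand displayed in $(iv)$. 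The weight $p_2^n(\vec{1}+r)/p_2^n(r)$ and the integration domain $[0,\infty)^n \setminus \partial_2^n$ carry over verbatim, as do the $\psi^{i,j}$ terms (the sum $\sum_{|F|=k}\psi^F(t_F)$ becomes $\sum_{i<j}\psi^{i,j}(t_i,t_j)$).

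The step requiring the most care is the edge case $n=2$, where Theorem \ref{bernsksevndimpart3} does not apply (its hypothesis is $n>k$). Here I would invoke Theorem \ref{basicradialndim} with $n=k=2$, which supplies the full five-way equivalence with the product representation $\int_{[0,\infty)^2}\prod_{i=1}^{2}(1-e^{-r_i t_i})(1+r_i)/r_i\, d\eta$. This representation is, by Theorem \ref{bernssevndim}, precisely a continuous Bernstein function of order $2$ in $(0,\infty)^2$ that vanishes on $\partial_1^2$. Since the sum in $(iv)$ then contains only the single term $\psi^{1,2}$, we take $\psi^{1,2} := g$ and $\eta \equiv 0$, recovering the required form; uniqueness reduces to that of Theorem \ref{basicradialndim}. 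The principal obstacle is thus not conceptual but bookkeeping: ensuring that the more general statement of Theorem \ref{bernsksevndimpart3} and the separate $n=2$ statement of Theorem \ref{basicradialndim} assemble into a single uniform representation formula valid for all $n\geq 2$.
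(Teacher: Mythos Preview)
Your proposal is correct and matches the paper's approach: the paper presents Theorem \ref{berns2sevndimpart2} without separate proof, simply noting that since $\Lambda_2^n[P] = P - \bigtimes_{i=1}^n P_i$, it is the explicit $k=2$ case of Theorem \ref{bernsksevndimpart3}. Your treatment is in fact slightly more careful than the paper's, since you explicitly address the edge case $n=2$ via Theorem \ref{basicradialndim} and observe that there $[0,\infty)^2\setminus\partial_2^2=\emptyset$, so the integral term vanishes and the representation collapses to $g=\psi^{1,2}$.
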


 The case $k=1$ in Theorem \ref{bernsksevndim}  is partially proved in \cite{Mirotin2013rohtua}, with a slightly different presentation, which is related to the observations made in Remark  \ref{rembernkn}, which together with Theorem \ref{bernsksevndimpart3} is the following result. 	
 
 \begin{corollary}\label{CNDseveralvariables} Let $g : [0, \infty)^{n} \to \mathbb{R}$ be a continuous function. The following conditions are equivalent:
 	\begin{enumerate}
 		\item [$(i)$] For any $d\in \mathbb{N}$ and discrete  measures  $\mu_{i}$ in $\mathbb{R}^{d}$, $1\leq i \leq n$, and with the restriction that at least one of those measures $\mu_{i}(\mathbb{R}^{d})=0$  it holds that
 		$$
 		\int_{(\mathbb{R}^{d})_{n}}\int_{ (\mathbb{R}^{d})_{n}}g(\|x_{1}-y_{1}\|^{2}, \ldots, \|x_{n} - y_{n}\|^{2})d[\bigtimes_{i=1}^{n}\mu_{i}](x)d[ \bigtimes_{i=1}^{n}\mu_{i}](y)\leq 0.
 		$$
 		\item [$(ii)$]  The kernel  	
 		$$
 		g(\|x_{1}-y_{1}\|^{2}, \ldots, \|x_{n} - y_{n}\|^{2}), \quad x_{i},y_{i} \in  \mathbb{R}^{d}
 		$$ is CND  for every  $d\in \mathbb{N}$.
 		\item [$(iii)$]  The function  can  be represented as
 		\begin{align*}
 			g(t) -g(0) =   \sum_{i=1}^{n}a_{i}t_{i}+ \int_{[0, \infty)^{n}\setminus{\{0\}}}  (1-e^{-r\cdot t})\frac{1+ \sum_{i=1}^{n}r_{i}}{\sum_{i=1}^{n}r_{i}}d\eta(r_{1},\ldots,  r_{n})
 		\end{align*}
 		where  the measure $\eta \in \mathfrak{M}([0,\infty)^{n}$ and the scalars $a_{i}$ are nonnegative. Further, the representation is unique.
 		\item[$(iv)$]The function $g$  is a Bernstein function of order $1$ in $(0, \infty)^{n}$.
 	\end{enumerate}	
 	
 \end{corollary}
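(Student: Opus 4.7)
The plan is to establish a chain (iv) $\Leftrightarrow$ (iii) $\Rightarrow$ (ii) $\Rightarrow$ (i) by routine arguments and then close the loop via (i) $\Rightarrow$ (iii) using the already established Theorem \ref{bernsksevndimpart3} together with one-variable Schoenberg.

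For (iv) $\Leftrightarrow$ (iii), I will differentiate the representation to obtain $\partial_i g(t) = a_i + \int_{[0,\infty)^n\setminus\{\vec{0}\}} e^{-r\cdot t}\,r_i \tfrac{1+\sum_j r_j}{\sum_j r_j}\,d\eta(r)$, completely monotone by Theorem \ref{Bochnercomplsevndim}; for the converse, apply Theorem \ref{Bochnercomplsevndim} to each $\partial_i g$ to obtain measures $\sigma_i$ whose compatibility $r_j\,d\sigma_i = r_i\,d\sigma_j$ (from $\partial_i\partial_j g = \partial_j\partial_i g$ and uniqueness of Bochner's representation) allows one to factor out a common $\eta$ off $\partial_1^n$ together with axis contributions generating the $a_i t_i$ coefficients. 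The implication (iii) $\Rightarrow$ (ii) is a termwise CND check: $\|x_i - y_i\|^2$ is CND on $\mathbb{R}^d$ by Schoenberg, $e^{-r\cdot t(x,y)} = \prod_i e^{-r_i\|x_i - y_i\|^2}$ is a tensor product of PD kernels hence PD, so $1 - e^{-r\cdot t}$ is CND, and integration against the nonnegative $\eta$ preserves CND. Finally (ii) $\Rightarrow$ (i) is immediate: any product $\bigtimes_i \mu_i$ with at least one zero-mass marginal has zero total mass.

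For the substantive direction (i) $\Rightarrow$ (iii), I will first normalize by $\tilde g := g - g(\vec{0})$; this preserves condition (i), since constants integrate to zero against mass-zero measures, and achieves $\tilde g(\vec{0}) = 0$, matching the vanishing required by Theorem \ref{bernsksevndimpart3} on $\partial_0^n = \{\vec{0}\}$ for $k=1$. When $n = 1$, condition (i) is Schoenberg's hypothesis and Theorem \ref{reprcondneg} applies directly. When $n \geq 2$, I will invoke Theorem \ref{bernsksevndimpart3} with $k=1$ to obtain
\[
\tilde g(t) = \sum_{i=1}^n \psi^i(t_i) + \int_{[0,\infty)^n\setminus\partial_1^n}(1 - e^{-r\cdot t})\,\frac{1 + \sum_j r_j}{\sum_j r_j}\,d\eta_0(r),
\]
with each $\psi^i$ a continuous one-variable Bernstein function vanishing at $0$, and then apply Theorem \ref{reprcondneg} to each $\psi^i$ to extract $\psi^i(t_i) = a_i t_i + \int_{(0,\infty)} (1-e^{-s t_i})\tfrac{1+s}{s}\,d\eta^i(s)$. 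Pushing each $\eta^i$ forward onto the positive $i$-axis of $[0,\infty)^n$ and summing with $\eta_0$ will produce a single nonnegative measure $\eta$ on $[0,\infty)^n\setminus\{\vec{0}\}$ yielding the desired expression.

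The main obstacle I anticipate is the coherent assembly of the measures across the strata of $[0,\infty)^n\setminus\{\vec{0}\}$: Theorem \ref{bernsksevndimpart3} contributes a measure off $\partial_1^n$, while the one-variable Schoenberg applied to each $\psi^i$ contributes one on the positive $i$-axis. The merge works because $[0,\infty)^n\setminus\{\vec{0}\}$ decomposes as the disjoint union of $[0,\infty)^n\setminus\partial_1^n$ with the positive coordinate axes, and the universal integrand $(1-e^{-r\cdot t})(1+\sum_j r_j)/\sum_j r_j$ restricts on the axis $\{s e_i : s > 0\}$ to exactly the Schoenberg integrand $(1-e^{-s t_i})(1+s)/s$, so the pieces glue without inconsistency. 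Uniqueness of the pair $(a_i,\eta)$ will be inherited from the uniqueness in Theorem \ref{Bochnercomplsevndim} applied to each $\partial_i g$, which pins down both the axis contributions giving $a_i$ and the off-axis part of $\eta$.
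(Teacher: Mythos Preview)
Your proposal is correct and follows essentially the same approach as the paper: the paper presents this corollary as the $k=1$ specialization of Theorem~\ref{bernsksevndimpart3} (together with Theorem~\ref{bernsksevndim}), without giving a separate proof, and your argument spells out precisely that specialization. Your explicit gluing of the one-variable Bernstein pieces $\psi^i$ onto the positive coordinate axes---observing that $[0,\infty)^n\setminus\{\vec{0}\}$ is the disjoint union of $[0,\infty)^n\setminus\partial_1^n$ with the open axes, and that the universal integrand restricts correctly there---is the natural way to pass from the representation in Theorem~\ref{bernsksevndimpart3}(iv) to the one stated here, and is exactly the ``slightly different presentation'' the paper alludes to in the sentence preceding the corollary.
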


 Next result describe when a kernel in Theorem \ref{basicradialndim}, Theorem \ref{PDseveralvariables} and  Theorem \ref{bernsksevndimpart3} is SPDI$_{k,n}^{\infty}$. We also obtain a crucial property, that in case $k=2$  the kernel is SPDI$_{2,n}^{\infty}$	if and only if it is and independence test for discrete probabilities with a double integration. For other values of $k$ is equivalent at describing if or not  $\Lambda_{k}^{n}[P]=0$, and on the case  $n=k$, we also show  that a  kernel is  SPDI$_{k}^{\infty}$	if and only if it can be used as a characterization for when the Streitberg interaction (but also as mentioned before the  Lancaster interaction) of a discrete probability is or not zero.

 \begin{corollary}\label{SPDPDIK} Let $n\geq k \geq 0$ and   $g:[0, \infty)^{n} \to \mathbb{R}$ be a continuous and PDI$_{k,n}^{\infty}$ function such that  $g(t)=0$ for every $t\in \partial_{k-1}^{n}$. Then, the function  is SPDI$_{k,n}^{\infty}$ if and only if the measure $\eta$ in its representation satisfy  $\eta((0,\infty)^{n})>0$.\\ 
 	Further, if $n\geq k\geq 2$ the kernel is SPDI$_{k,n}^{\infty}$ if and only if for every   discrete probability  $P$ in  $(\mathbb{R}^{d})_{n}$ 
 	$$
 	\int_{(\mathbb{R}^{d})_{n}}\int_{ (\mathbb{R}^{d})_{n}}g(\|x_{1}-y_{1}\|^{2}, \ldots, \|x_{n} - y_{n}\|^{2})d[\Lambda_{k}^{n}[P] ](x)d[\Lambda_{k}^{n}[P]](y)=0.
 	$$	
 	if and only if $\Lambda_{k}^{n}[P]=0$.\\
 	Also, if $k=n$ the kernel is SPDI$_{n,n}^{\infty}$ if and only if for every probability discrete probability  $P$ in  $(\mathbb{R}^{d})_{n}$, its Streitberg interaction $\Sigma[P]$ satisfy 
 	$$
 	\int_{(\mathbb{R}^{d})_{n}}\int_{(\mathbb{R}^{d})_{n}}(-1)^{n}g(\|x_{1}-y_{1}\|^{2}, \ldots, \|x_{n} - y_{n}\|^{2})d[\Sigma[P]](x)d[\Sigma[P]](y)=0.
 	$$ 
 	if and only if $\Sigma[P]=0$. 	
 \end{corollary}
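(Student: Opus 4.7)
The plan is to unify all three assertions through the integral representation of $g$ given by Theorem \ref{bernsksevndimpart3} when $n>k$ and by Theorem \ref{basicradialndim} when $n=k$. First I would substitute that representation into $\int\!\!\int(-1)^{k}g\,d\mu\,d\mu$ for arbitrary $\mu\in\mathcal{M}_{k}((\mathbb{R}^{d})_{n})$ and swap the order of integration exactly as in the $(iv)\Rightarrow(iii)$ step of Theorem \ref{bernsksevndimpart3}, producing the nonnegative decomposition
\begin{align*}
\int\!\!\int(-1)^{k}g\,d\mu\,d\mu &= \int_{[0,\infty)^{n}\setminus\partial_{k}^{n}} J_{r}(\mu)\,\frac{p_{k}^{n}(\vec{1}+r)}{p_{k}^{n}(r)}\,d\eta(r) \\
&\quad + \sum_{|F|=k}\int\!\!\int(-1)^{k}\psi^{F}\,d\mu_{F}\,d\mu_{F},
\end{align*}
where $J_{r}(\mu):=\int\!\!\int \prod_{i}e^{-r_{i}\|x_{i}-y_{i}\|^{2}}\,d\mu(x)\,d\mu(y)\ge 0$ is the Gaussian form and the $\psi^{F}$-sum is absent when $n=k$. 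Each $\psi^{F}$-piece is nonnegative since $\mu_{F}\in\mathcal{M}_{k}((\mathbb{R}^{d})^{k})$ and $\psi^{F}$ is PDI$_{k,k}^{\infty}$, so vanishing of the left-hand side forces $J_{r}(\mu)=0$ for $\eta$-a.e.\ $r$ together with vanishing of every $\psi^{F}$-piece.

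For the first equivalence, if $\eta((0,\infty)^{n})>0$ then for any nonzero $\mu$ the fact that the Gaussian kernel on $(\mathbb{R}^{d})^{n}$ with bandwidth vector $r$ is characteristic gives $J_{r}(\mu)>0$ for every $r\in(0,\infty)^{n}$; combined with continuity in $r$ and positivity of the weight $p_{k}^{n}(\vec{1}+r)/p_{k}^{n}(r)$ on the open orthant, this delivers a strictly positive $\eta$-integral, hence SPDI$_{k,n}^{\infty}$. For the converse, assume $\eta((0,\infty)^{n})=0$ and choose nonzero discrete signed measures $\mu_{i}\in\mathcal{M}(\mathbb{R}^{d})$ with $\mu_{i}(\mathbb{R}^{d})=0$ for every $1\le i\le n$; set $\mu:=\bigtimes_{i=1}^{n}\mu_{i}\in\mathcal{M}_{n}\subset\mathcal{M}_{k}$. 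On the support of $\eta$ some coordinate $r_{j}$ is zero, so the factored form of $J_{r}(\mu)$ carries a factor $\mu_{j}(\mathbb{R}^{d})^{2}=0$ and vanishes; and for every $|F|=k$ with $k<n$ the marginal $\mu_{F}=(\bigtimes_{i\in F}\mu_{i})\prod_{i\in F^{c}}\mu_{i}(\mathbb{R}^{d})$ is zero (while for $k=n$ no $\psi^{F}$-sum appears). Thus the integral is zero on this nonzero $\mu$, ruling out SPDI.

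For the remaining two assertions, Theorem \ref{generallancaster}(i) ensures $\Lambda_{k}^{n}[P]\in\mathcal{M}_{k}$ and Lemma \ref{exm02xn} ensures $\Sigma[P]\in\mathcal{M}_{n}$, so if $g$ is SPDI the iff-characterizations of $\Lambda_{k}^{n}[P]=0$ (for $n\ge k\ge 2$) and $\Sigma[P]=0$ (for $k=n$) through the double integral follow directly from the definition of SPDI. Conversely, if $g$ is not SPDI, the product $\mu=\bigtimes_{i}\mu_{i}$ built above already annihilates the double integral; Theorem \ref{genlancastercartesianproduct} (respectively the second part of Lemma \ref{exm02xn}) realises this $\mu$ as $M(-1)^{n}\Lambda_{k}^{n}[P]$ (respectively $M(-1)^{n}\Sigma[P]=M(-1)^{n}\Lambda[P]$) for some $M\ge 0$ and a probability $P$, producing a nonzero Lancaster or Streitberg interaction whose double integral against $g$ vanishes, in contradiction with the claimed characterization. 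The main obstacle is the strictness step of the forward direction: one has to verify that the pointwise strict positivity $J_{r}(\mu)>0$ on $(0,\infty)^{n}$ (the characteristic property of the product Gaussian kernel on $(\mathbb{R}^{d})^{n}$) does upgrade to strict positivity of the weighted $\eta$-integral whenever $\eta((0,\infty)^{n})>0$, via continuity of $r\mapsto J_{r}(\mu)$ and positivity of the weight on the open orthant; everything else is bookkeeping against the representation theorems and the realisation lemmas for $\Lambda_{k}^{n}[P]$ and $\Sigma[P]$.
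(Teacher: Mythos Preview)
Your proposal is correct and follows the same line as the paper's proof: substitute the integral representation, bound below by the $(0,\infty)^{n}$-part using that the product Gaussian kernel is strictly positive definite, and for the converse (you phrase it contrapositively, the paper directly) test against $\mu=\bigtimes_{i=1}^{n}\mu_{i}$ with all $\mu_{i}(\mathbb{R}^{d})=0$, then invoke Theorem~\ref{genlancastercartesianproduct} and Lemma~\ref{exm02xn} to realise this $\mu$ as a Lancaster/Streitberg interaction. One minor remark: the continuity of $r\mapsto J_{r}(\mu)$ that you flag as the ``main obstacle'' is not needed---strict positivity of $J_{r}(\mu)$ everywhere on $(0,\infty)^{n}$ together with $\eta((0,\infty)^{n})>0$ already forces the weighted integral to be strictly positive.
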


 \begin{proof} If  $\eta((0,\infty)^{n})>0$, for any  nonzero $\mu \in \mathcal{M}_{k}((\mathbb{R}^{d})_{n})$ it holds that
 	\begin{align*}
 		&\int_{(\mathbb{R}^{d})_{n}}\int_{ (\mathbb{R}^{d})_{n}}(-1)^{k}g(\|x_{1}-y_{1}\|^{2}, \ldots, \|x_{n} - y_{n}\|^{2})d\mu(x)d\mu(y)\\
 		& \geq \int_{(0, \infty)^{n}} \left [  	\int_{(\mathbb{R}^{d})_{n}}\int_{ (\mathbb{R}^{d})_{n}}e^{-r_{1}\|x_{1}-y_{1}\|^{2} - \ldots -\|x_{n} - y_{n}\|^{2}r_{n}  }d\mu(x)d\mu(y) \right ] \frac{p_{k}^{n}(\vec{1} + r)}{p_{k}^{n}(r)}d\eta(r)>0
 	\end{align*}		
 	because the Kronecker product of SPD kernels (in this case, the Gaussians) is SPD.\\
 	For the converse, we pick nonzero discrete measures $\mu_{i} $ in 	$\mathbb{R}^{d}$, $1\leq i \leq n$,  for which $\mu_{i}(\mathbb{R}^{d})=0$, then $\mu := (\bigtimes_{i=1}^{n}\mu_{i}) \in \mathcal{M}_{n}((\mathbb{R}^{d})_{n}) \subset \mathcal{M}_{k}((\mathbb{R}^{d})_{n})$, and  by Equation \ref{integmu0n} we have that
 	\begin{align*}
 		&0< \int_{(\mathbb{R}^{d})_{n}}\int_{ (\mathbb{R}^{d})_{n}}(-1)^{k}g(\|x_{1}-y_{1}\|^{2}, \ldots, \|x_{n} - y_{n}\|^{2})d\mu(x)d\mu(y)\\
 		& = \int_{(0, \infty)^{n}} \left [  	\int_{(\mathbb{R}^{d})_{n}}\int_{ (\mathbb{R}^{d})_{n}}e^{-r_{1}\|x_{1}-y_{1}\|^{2} - \ldots -r_{n}\|x_{n} - y_{n}\|^{2}  }d\mu(x)d\mu(y) \right ] \frac{p_{k}^{n}(\vec{1} + r)}{p_{k}^{n}(r)}d\eta(r),
 	\end{align*}
 	as the inner double integral is positive for every $r \in (0, \infty)$, we must have that $\eta((0,\infty)^{n})>0$. \\
 	The same proof for the converse relation implies the other  assumptions by Lemma   \ref{exm02xn} and Lemma \ref{genlancastercartesianproduct}. \end{proof}

 \section{PDI functions  based on sums.}\label{sumsbased}

 In \cite{Guo1993}   it is proved a generalization of Theorem \ref{reprcondneg}, as  it describes, for any $\ell \in \mathbb{Z}_{+}$,  the set of  continuous functions  $\psi:[0, \infty) \to \mathbb{R}$, that  satisfies 
 $$ 
 \sum_{i,j=1}^{n}c_{i}c_{j} \psi(\|x_{i}-x_{j}\|^{2})\geq 0  
 $$
 for every points $x_{1}, \ldots, x_{n} \in \mathbb{R}^{d}$ and scalars $c_{1}, \ldots, c_{n}$, under no restriction on the dimension $d$, but such that  
 $$ 
 \sum_{i=1}^{n}c_{i}p(x_{i})=0, \quad p \text{ is a polynomial of degree less  or equal to } \ell-1.
 $$
 If the function $\psi$ satisfies this requirement we say that the kernel $\psi(\|x-y\|^{2})$ is Conditionally Positive Definite of Order $\ell$ (CPD$_{\ell}$) , see Chapter 8 in \cite{Wendland2005}. Note that the case $\ell =1$ we are dealing with constants (polynomials of degree zero), hence $-\psi(\|x-y\|^{2})$ is an  CND kernel. The case $\ell =0$ is of  PD kernels.  	
 
 \begin{theorem}\label{compleelltimes}  The following conditions are equivalent for a continuous functions $\psi: [0, \infty) \to \mathbb{R}$ 
 	\begin{enumerate}
 		\item[$(i)$] The kernel
 		$$
 		(x,y) \in \mathbb{R}^{d}\times \mathbb{R}^{d} \to \psi(\|x-y\|^{2}) \in \mathbb{R}
 		$$
 		is CPD$_{\ell}$  for every $d \in \mathbb{N}$. 
 		\item[$(ii)$] The function $\psi$ can be represented as
 		$$	
 		\psi(t)= \int_{(0,\infty)} (e^{-tr} - e_{\ell}(r)\omega_{\ell}(rt))\frac{1+r^{\ell}}{r^{\ell}} d\eta(r) + \sum_{k=0}^{\ell}a_{k}t^{k}
 		$$
 		where $\sigma$ is a nonnegative  measure in $\mathfrak{M}((0,\infty))$, with
 		$$
 		\omega_{\ell} (s):= \sum_{j=0}^{\ell-1}(-1)^{j}\frac{s^{j}}{j!}, \quad e_{\ell}(s):= e^{-s}\sum_{j=0}^{\ell-1}\frac{s^{j}}{j!},
 		$$
 		and $(-1)^{\ell}a_{\ell} \geq 0$. The representation is unique.
 		\item[$(iii)$] The function $\psi \in C^{\infty}((0,\infty))$ and the function  $(-1)^{\ell}\psi^{(\ell)}$ is  completely monotone, that is, $(-1)^{n+\ell}\psi^{(n+\ell)}(t) \geq 0$, for every $n\in \mathbb{Z}_{+}$ and $t>0$.
 	\end{enumerate}
 \end{theorem}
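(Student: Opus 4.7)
The plan is to prove the cycle (iii) $\Leftrightarrow$ (ii) $\Rightarrow$ (i) $\Rightarrow$ (iii), mirroring the template of Theorem \ref{reprcondneg} (the case $\ell=1$, which serves as a base case).

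For (iii) $\Leftrightarrow$ (ii), which is the purely analytic part, I would proceed as follows. Starting from complete monotonicity of $(-1)^{\ell}\psi^{(\ell)}$, the Hausdorff–Bernstein–Widder theorem provides a unique nonnegative Borel measure $\nu$ on $[0,\infty)$ with $(-1)^{\ell}\psi^{(\ell)}(t)=\int_{[0,\infty)}e^{-tr}d\nu(r)$. Integrating $\ell$ times and fixing the $\ell$ constants of integration reconstructs $\psi$, but the naive integrand $e^{-tr}/r^{\ell}$ is not integrable near $r=0$; to cure this I subtract the first $\ell$ terms of the Taylor expansion of $e^{-rt}$ at $r=0$, which produces the correction $e_{\ell}(r)\omega_{\ell}(rt)$ (here $\omega_{\ell}$ is that Taylor polynomial and $e_{\ell}$ absorbs the regularization at large $r$). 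The cured integrand is $O(r^{\ell})$ as $r\to 0$, and the weight $(1+r^{\ell})/r^{\ell}$ then defines a finite measure $\eta$ on $(0,\infty)$ exactly as in Theorem \ref{reprcondneg}. The polynomial part $\sum_{k=0}^{\ell}a_{k}t^{k}$ accounts for the constants of integration plus the possible atom of $\nu$ at $0$, with that atom yielding the sign constraint $(-1)^{\ell}a_{\ell}\ge 0$. For the converse implication, differentiate the integral representation $\ell$ times under the integral (justified by the bounds coming from the weight and from the growth of $\omega_{\ell}$) to recover a completely monotone function, and uniqueness follows from uniqueness of $\nu$ and of the polynomial remainders.

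For (ii) $\Rightarrow$ (i): check each building block is CPD$_{\ell}$ as a radial kernel on every $\mathbb{R}^{d}$. The Gaussian $e^{-r\|x-y\|^{2}}$ is PD (Schoenberg), hence trivially CPD$_{\ell}$. The correction $e_{\ell}(r)\omega_{\ell}(r\|x-y\|^{2})$ is a polynomial of degree $\ell-1$ in $\|x-y\|^{2}$, so it has total degree at most $2(\ell-1)$ as a polynomial in the pair $(x,y)$; expanding it in tensor form $\sum_{\alpha} p_{\alpha}(x)q_{\alpha}(y)$, every summand has $\deg p_{\alpha}\le \ell-1$ or $\deg q_{\alpha}\le \ell-1$, and the quadratic form $\sum_{i,j}c_{i}c_{j}p_{\alpha}(x_{i})q_{\alpha}(x_{j})$ vanishes under the CPD$_{\ell}$ moment restriction on the $c_{i}$. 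The same argument makes each lower monomial $a_{k}t^{k}$ with $k\le \ell-1$ contribute zero on any admissible configuration, while the top term $a_{\ell}\|x-y\|^{2\ell}$ is CPD$_{\ell}$ with the sign $(-1)^{\ell}$ by a classical fact (see \cite{Wendland2005}). Interchanging the double sum with the integration in $\eta$ completes the verification.

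For (i) $\Rightarrow$ (iii): this is the step I expect to be the main obstacle. The plan is a divided-difference reduction to PD radial kernels. Fix $0=s_{0}<s_{1}<\ldots<s_{\ell}$ on the real line and coefficients $b_{0},\ldots,b_{\ell}$ realizing an $\ell$-th order divided difference ($\sum_{i}b_{i}s_{i}^{k}=0$ for $k\le \ell-1$ and $\sum_{i}b_{i}s_{i}^{\ell}\ne 0$). For arbitrary $x_{1},\ldots,x_{n}\in \mathbb{R}^{d}$ and arbitrary scalars $c_{1},\ldots,c_{n}$, embed into $\mathbb{R}^{d+1}$ via the points $(x_{p},s_{i})$ with coefficients $c_{p}b_{i}$. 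These coefficients annihilate any polynomial of degree $\le \ell-1$ in the $(d+1)$-th coordinate regardless of the $c_{p}$, so the CPD$_{\ell}$ hypothesis in $\mathbb{R}^{d+1}$ yields
\[
\sum_{p,q=1}^{n}c_{p}c_{q}\,\Delta[\psi]\bigl(\|x_{p}-x_{q}\|^{2}\bigr)\ge 0,\qquad \Delta[\psi](u):=\sum_{i,j=0}^{\ell}b_{i}b_{j}\,\psi\bigl(u+(s_{i}-s_{j})^{2}\bigr).
\]
Since the $c_{p}$ are unrestricted, $\Delta[\psi](\|x-y\|^{2})$ is a PD radial kernel in every Euclidean space; by Schoenberg's classical theorem it equals $\int_{[0,\infty)}e^{-ur}d\mu(r)$ for some nonnegative $\mu$. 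Rescaling $s_{i}=\varepsilon i$ and taking $\varepsilon\to 0$, the leading asymptotics of $\Delta[\psi](u)$ are $\varepsilon^{2\ell}C_{\ell}\psi^{(\ell)}(u)/\ell!$ with $C_{\ell}:=\sum_{i,j}b_{i}b_{j}(i-j)^{2\ell}$ satisfying $(-1)^{\ell}C_{\ell}>0$ (a direct combinatorial check using $\sum_{i}b_{i}i^{k}=0$ for $k<\ell$), so one deduces pointwise that $(-1)^{\ell}\psi^{(\ell)}(u)$ is a limit of (scaled) Laplace transforms, hence completely monotone. The two technical obstacles are to first justify $\psi\in C^{\ell}((0,\infty))$ (which I would extract from the PD-ness of $\Delta[\psi]$ at many different parameter scales $\varepsilon$, together with the smoothness of Laplace transforms), and to control the passage to the limit $\varepsilon\to 0$ in the Bernstein representation so that the limiting measure is in fact nonnegative and represents $(-1)^{\ell}\psi^{(\ell)}$.
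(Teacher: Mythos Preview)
The paper does not actually prove Theorem~\ref{compleelltimes}; it is quoted as a known result from \cite{Guo1993} (see also \cite{Wendland2005}, Chapter~8). So there is no ``paper's own proof'' to match. What the paper does offer, in Remark~\ref{difguo}, is an alternative route for the hard implication (iii)$\Rightarrow$(ii): pass through the PDI$_{\ell}^{\infty}$ framework of Theorem~\ref{ktimes}, where the Bernstein representation of Theorem~\ref{bernssevndim} (applied to $G(t)=\sum_{F}(-1)^{|F|}\psi(\overline{t_{F}})$) together with Lemma~\ref{sumcm} forces the representing measure onto the diagonal $\{r\vec{1}\}$, after which the Fr\'echet functional equation pins down the polynomial remainder. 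That route bypasses any a~priori regularity issue, because smoothness of $G$ comes for free from the representation in Theorem~\ref{bernssevndim}.

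Your cycle (iii)$\Leftrightarrow$(ii)$\Rightarrow$(i) is fine and standard. Your argument for (i)$\Rightarrow$(iii) via the divided-difference embedding $(x_{p},\varepsilon i)\in\mathbb{R}^{d+1}$ with tensor coefficients $c_{p}b_{i}$ is correct up to the point you identify: the moment conditions are indeed satisfied for arbitrary $c_{p}$ because every monomial $x^{\alpha}s^{k}$ with $|\alpha|+k\le\ell-1$ has $k\le\ell-1$, so the $b$-sum already kills it; and your computation $(-1)^{\ell}C_{\ell}=\binom{2\ell}{\ell}\bigl(\sum_{i}b_{i}i^{\ell}\bigr)^{2}>0$ is right. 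The genuine gap is exactly the one you flag: you need $\psi\in C^{\ell}((0,\infty))$ before you can Taylor-expand $\Delta_{\varepsilon}[\psi]$ and pass to the limit. Knowing that each $\Delta_{\varepsilon}[\psi]$ is a Laplace transform (hence $C^{\infty}$) does not give smoothness of $\psi$, since $\Delta_{\varepsilon}[\psi]$ is a linear combination of \emph{different} shifts of $\psi$ and there is no obvious way to invert that to recover regularity of $\psi$ itself from finitely or even infinitely many $\varepsilon$'s. The literature handles this either by an induction on $\ell$ that reduces to lower-order complete monotonicity (the Micchelli/Guo line), or---as the paper does---by embedding into a multivariate Bernstein framework where the representation theorem supplies the smoothness. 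If you want to salvage your approach, one workable fix is to argue via higher-order convexity: the nonnegativity of $\Delta_{\varepsilon}[\psi]$ for all $\varepsilon>0$ says that certain iterated second differences of $\psi$ are nonnegative, and Popoviciu-type results on $n$-convex functions then yield the required differentiability on $(0,\infty)$; but this needs to be spelled out carefully and is not the one-line extraction you sketch.
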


 A function that satisfies the equivalence on Theorem \ref{compleelltimes}  is called a  completely monotone function of order $\ell$  (CM$_{\ell}$). For instance, the functions
 \begin{enumerate}
 	\item[$i)$] $(-1)^{\ell}t^{a} $;
 	\item[$ii)$] $(-1)^{\ell}t^{\ell-1}\log (t)$;
 	\item[$iii)$] $(-1)^{\ell}( c+t )^{a}$;
 	\item[$iv)$] $e^{-rt}$, 
 \end{enumerate}
 are elements of CM$_{\ell}$, for $\ell-1 < a \leq \ell$  and  $c>0$.

 In  \cite{guella2023} the case $\ell=2$ was proved to have a  connection with radial PDI$_{2,n}^{\infty}$ kernels in $(\mathbb{R}^{d})_{2}$ for every $d \in \mathbb{N}$, that is, independence tests. In this Section we go one step further and relate a general $\ell$ with  PDI$_{\ell, n}^{\infty}$  functions.

 In order to prove Theorem \ref{ktimes}    we will need the following special representation of completely monotone functions with $\ell$ variables.
 
 \begin{lemma}\label{sumcm} Let $g:(0, \infty) \to \mathbb{R}$, the function   $ t \in (0, \infty)^{\ell} \to g(t_{1} + \ldots + t_{\ell})$ is completely monotone with $\ell$ variables if and only if $g$ is completely monotone with one variable. 
 \end{lemma}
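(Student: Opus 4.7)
The plan is to reduce everything to the chain rule, exploiting that the summation map $\sigma\colon (0,\infty)^{\ell}\to (0,\infty)$, $\sigma(t):=t_{1}+\ldots+t_{\ell}$, is smooth, surjective, and has all first-order partial derivatives identically equal to $1$. Throughout, write $h(t):=g(\sigma(t))$.

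For the forward direction, assume $g$ is completely monotone in one variable, so by definition $g\in C^{\infty}((0,\infty))$ with $(-1)^{n}g^{(n)}(s)\geq 0$ for all $s>0$ and $n\in\mathbb{Z}_{+}$. A short induction on $|\alpha|$, using the chain rule together with the fact that each $\partial_{t_{i}}\sigma\equiv 1$, shows that for every multi-index $\alpha\in\mathbb{Z}_{+}^{\ell}$ one has $\partial^{\alpha}h(t)=g^{(|\alpha|)}(\sigma(t))$. Therefore $(-1)^{|\alpha|}\partial^{\alpha}h(t)\geq 0$ on $(0,\infty)^{\ell}$, which is precisely the defining property of being completely monotone with $\ell$ variables.

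For the converse, assume $h$ is completely monotone in $\ell$ variables, so in particular $h\in C^{\infty}((0,\infty)^{\ell})$. Recovering $g$ along the diagonal via $g(s)=h(s/\ell,\ldots,s/\ell)$ immediately gives $g\in C^{\infty}((0,\infty))$. Specializing to pure first-coordinate derivatives yields $\partial_{t_{1}}^{n}h(t)=g^{(n)}(\sigma(t))$, whose sign is $(-1)^{n}$ by the hypothesis on $h$. Since $\sigma$ is surjective onto $(0,\infty)$ (any $s>0$ is attained, e.g., at $t_{i}=s/\ell$), we conclude $(-1)^{n}g^{(n)}(s)\geq 0$ for every $s>0$ and every $n\in\mathbb{Z}_{+}$, i.e.\ $g$ is completely monotone.

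I do not anticipate a real obstacle here: the statement is essentially the chain rule packaged with the surjectivity of $\sigma$. An alternative route would invoke the integral representations, writing $g(s)=\int_{[0,\infty)}e^{-rs}d\mu(r)$ via the Hausdorff--Bernstein--Widder theorem and pushing $\mu$ forward by the diagonal embedding $r\mapsto(r,\ldots,r)$ to produce a representation of $h$ of the form required by Theorem~\ref{Bochnercomplsevndim}; the converse along this route would then require invoking uniqueness of that representation to recover $\mu$ from $h$. The direct derivative argument above is shorter and avoids this bookkeeping.
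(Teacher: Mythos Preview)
Your proof is correct and takes essentially the same approach as the paper: both directions are handled by the chain rule, using $\partial^{\alpha}h(t)=g^{(|\alpha|)}(\sigma(t))$ in the forward direction and first-coordinate partials $\partial_{t_1}^{n}h$ in the converse. The only cosmetic difference is that the paper fixes $t_{2},\ldots,t_{\ell}=c/(\ell-1)$ to write $g^{(k)}(t_{1}+c)$, whereas you recover smoothness of $g$ via the diagonal and then invoke surjectivity of $\sigma$; your version is in fact slightly more careful about justifying $g\in C^{\infty}$ before differentiating it.
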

 
 \begin{proof} For the converse, if $g$ is completely monotone, then
 	$$	
 	(-1)^{|\alpha|}\partial^{\alpha}[g(t_{1} + \ldots + t_{\ell}))] = (-1)^{|\alpha|}g^{(|\alpha|)} (t_{1} + \ldots + t_{\ell}) \geq 0.	
 	$$	
 	Conversely, if the function $g(t_{1} + \ldots + t_{\ell})$ is completely monotone with $\ell$ variables, then for any $c >0$ define $t_{i}= c/(\ell -1)$ to obtain that  
 	$$	 
 	(-1)^{k }g^{(k)} (t_{1} + c) = (-1)^{k}\partial_{1}^{k}[g(t_{1} + c))] \geq 0, \quad k \in \mathbb{Z}_{+}	
 	$$
 	which proves that $g$ is completely monotone.\end{proof}

 In particular, for  a function $g$ that satisfies Lemma \ref{sumcm}, we have that
 
 $$
 g(t_{1} + \ldots + t_{\ell})= \int_{[0, \infty)}e^{-r(t_{1} + \ldots + t_{\ell})}d\eta(r),
 $$
 that is, the measure that represents $g(t_{1} + \ldots + t_{\ell})$ has support on $\{(r\vec{1}), \quad  r \in [0, \infty)  \} \subset [0, \infty)^{\ell}$. 
 
 In the  proof of   Theorem    \ref{ktimes}, we use the Frechet functional equation, which is a generalization of the Cauchy functional equation and  is concerned with which functions $z: \mathbb{R} \to \mathbb{R} $ satisfies the following relation
 $$
 \sum_{F \subset \{1, \ldots, \ell\}}(-1)^{\ell-|F|}z(\overline{t_{F}})=0, \quad t_{1}, \ldots, t_{\ell} \in \mathbb{R}^{\ell} \text{ with } \overline{t_{\emptyset}}:=0, \quad  \overline{t_{F}}:= \sum_{i \in F}t_{i}.
 $$ 
 Under some very weak assumptions on the function $z$, which continuity is a special case,  it can be proved that $z$ must be a polynomial of degree less  or equal to $\ell-1$. For the proof of Theorem \ref{ktimes} we need a  consequence  of this result: assume that  $z$ is continuous but it is defined on $[0, \infty)$ and the functional equation only holds for $t_{1}, \ldots, t_{\ell} \in [0, \infty)$. Likewise, the function $z$ still is a polynomial of degree less than or equal to  $\ell-1$, and an argument can be found in \cite{kuczma1964equation} or in Theorem $13,6$ page $273$ in \cite{kuczma1968functional}. We leave to the reader the proof that for any $0 \leq k \leq \ell-1 $         
 \begin{equation}\label{eq1frechet}
 	\sum_{F \subset \{1, \ldots, \ell\}}(-1)^{\ell-|F|}(\overline{t_{F}})^{k}=0, \quad t_{1}, \ldots, t_{\ell} \in \mathbb{R}
 \end{equation}
 \begin{equation}\label{eq2frechet}
 	\sum_{F \subset \{1, \ldots, \ell\}}(-1)^{\ell-|F|}(\overline{t_{F}})^{\ell}=\ell!\prod_{i=1}^{\ell}t_{i}, \quad t_{1}, \ldots, t_{\ell} \in \mathbb{R}
 \end{equation}
 which can be proved by the multinomial Theorem.

 \begin{theorem}\label{ktimes} Let $\psi:[0, \infty) \to \mathbb{R}$ be a continuous function and $\ell \in \mathbb{N}$. The following conditions are equivalent:
 	\begin{enumerate}
 		\item [$(i)$] The function $(-1)^{\ell}\psi(t_{1}+ \ldots + t_{n})$ is PDI$_{\ell,n}^{\infty}$.
 		\item [$(ii)$] 	The kernel
 		$$
 		(x,y) \in \mathbb{R}^{d}\times \mathbb{R}^{d} \to \psi(\|x-y\|^{2}) \in \mathbb{R}
 		$$
 		is CPD$_{\ell}$  for every $d \in \mathbb{N}$. 
 		\item [$(iii)$] The function $\psi$ can be represented as
 		$$
 		\psi(t)= \sum_{k=0}^{\ell}a_{k}t^{k}  + \int_{(0,\infty)}(e^{-rt} - e_{\ell}(r)\omega_{\ell}(rt))\frac{(1+r)^{\ell}}{r^{\ell}} d\eta(r) 
 		$$
 		where  $(-1)^{\ell}a_{\ell} \geq 0$   and $\eta \in \mathfrak{M}([0, \infty))$ is a nonnegative  measure. The representation is unique.
 		\item [$(iv)$] The function $\psi$ is  a completely monotone function of order $\ell$, that is, $\psi \in C^{\infty}((0, \infty)) $ and $(-1)^{\ell}\psi^{(\ell)}$ is a completely monotone function.
 	\end{enumerate}
 \end{theorem}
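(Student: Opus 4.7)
The equivalences $(ii)\Leftrightarrow(iii)\Leftrightarrow(iv)$ are Theorem \ref{compleelltimes} applied to $\psi$: the representation in (iii) differs from Theorem \ref{compleelltimes}(ii) only by the factor $(1+r)^{\ell}/r^{\ell}$ in place of $(1+r^{\ell})/r^{\ell}$, and both continuous positive densities have the same scaling behaviour ($\sim r^{-\ell}$ at $0$, bounded and bounded away from $0$ at infinity), so absorbing their ratio into the representing measure shows the two representations describe the same class of $\psi$.

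The remaining content is $(i)\Leftrightarrow(iv)$. The plan is to set $g(t_1,\ldots,t_n):=(-1)^{\ell}\psi(t_1+\ldots+t_n)$ and pass via Lemma \ref{PDInsimplik} to the boundary-adjusted function
\[
G(t):=g(t)+\sum_{j=0}^{\ell-1}(-1)^{\ell-j}\binom{n-j-1}{n-\ell}\sum_{|F|=j}g(t_F),
\]
which is continuous, vanishes on $\partial_{\ell-1}^{n}$, and is PDI$_{\ell,n}^{\infty}$ iff $g$ is. The key observation is that each correction term $g(t_F)$ with $|F|=j<\ell$ depends on only $j$ coordinates, so for every $F'$ with $|F'|=\ell$ one has $\partial^{\vec{1}_{F'}}G=\partial^{\vec{1}_{F'}}g=(-1)^{\ell}\psi^{(\ell)}(t_1+\ldots+t_n)$, whenever $\psi^{(\ell)}$ is defined. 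This reduces the whole question to the single condition that $G$ be a Bernstein function of order $\ell$ in the sense of Theorem \ref{bernsksevndimpart3}.

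For $(iv)\Rightarrow(i)$, the hypothesis gives $\psi\in C^{\infty}((0,\infty))$ with $(-1)^{\ell}\psi^{(\ell)}$ completely monotone, so $G\in C^{\infty}((0,\infty)^n)$; Lemma \ref{sumcm} lifts complete monotonicity of $(-1)^{\ell}\psi^{(\ell)}$ to that of $(t_1,\ldots,t_n)\mapsto(-1)^{\ell}\psi^{(\ell)}(\sum t_i)$ on $(0,\infty)^n$, showing $G$ is a Bernstein function of order $\ell$, and then Theorem \ref{bernsksevndimpart3} (or Theorem \ref{basicradialndim} when $n=\ell$) yields that $G$, and hence $g$, is PDI$_{\ell,n}^{\infty}$. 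For $(i)\Rightarrow(iv)$, the same two results applied in the other direction give $G\in C^{\infty}((0,\infty)^n)$ with $\partial^{\vec{1}_{F'}}G$ completely monotone; interpreting the identity $\partial^{\vec{1}_{F'}}G=(-1)^{\ell}\psi^{(\ell)}(\sum t_i)$ distributionally, the smoothness of the left-hand side combined with restriction to a transversal line to the sum map $(t_1,\ldots,t_n)\mapsto\sum t_i$ forces $\psi^{(\ell)}\in C^{\infty}((0,\infty))$, and the converse direction of Lemma \ref{sumcm} then shows $(-1)^{\ell}\psi^{(\ell)}$ is completely monotone as a function of one variable.

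The main obstacle is the regularity bootstrap in $(i)\Rightarrow(iv)$: a priori only continuity of $\psi$ is assumed, yet smoothness of $\psi$ itself must be extracted from the smoothness of $G$ supplied by Theorem \ref{bernsksevndimpart3}. The cleanest route is the distributional argument sketched above; an elementary alternative is to evaluate $G$ along the diagonal $t_1=\cdots=t_n=s/n$ to obtain an explicit scalar relation of the form $(-1)^{\ell}\psi(s)=G(s/n,\ldots,s/n)-\sum_{j=0}^{\ell-1}c_j\,\psi(js/n)$ with combinatorial constants $c_j$, and to bootstrap the regularity of $\psi$ inductively from this functional equation before invoking Lemma \ref{sumcm}.
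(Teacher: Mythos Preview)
Your approach is essentially correct and genuinely different from the paper's. For $(iv)\Rightarrow(i)$ you go through Theorem~\ref{bernsksevndimpart3}/Theorem~\ref{basicradialndim} and Lemma~\ref{sumcm}; the paper instead proves $(ii)\Rightarrow(i)$ directly by embedding $x_\alpha\in(\mathbb{R}^d)_n$ as a single vector $u^\alpha\in\mathbb{R}^{dn}$ and observing that a polynomial of degree $\leq\ell-1$ on $\mathbb{R}^{dn}$ can involve at most $\ell-1$ of the $n$ blocks, so is annihilated by any $\mu\in\mathcal{M}_\ell((\mathbb{R}^d)_n)$. Your route is more structural; the paper's is quicker.

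The real divergence is in $(i)\Rightarrow(iv)$. You correctly identify the regularity bootstrap as the crux, and your distributional argument can be made rigorous: since $\Psi(t):=\psi(\sum t_i)$ is invariant under the flows $\partial_{t_i}-\partial_{t_j}$, all first partials of $\Psi$ agree as distributions, hence $\partial^{\vec 1_{F'}}\Psi=\partial_{t_1}^\ell\Psi$; smoothness of the latter on $(0,\infty)^n$ lets you integrate $\ell$ times in $t_1$ (with the other coordinates fixed and small) to conclude $\psi\in C^\infty((0,\infty))$. This is a genuine lemma that deserves to be stated and proved, not just sketched. By contrast, your ``elementary alternative'' along the diagonal yields $\psi(s)=(\text{smooth})+\sum_{j=1}^{\ell-1}c_j\psi(js/n)$, and this functional equation does \emph{not} bootstrap regularity in any obvious way (knowing the right-hand side is $C^m$ only gives $\psi\in C^m$, with no gain); I would drop it.

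The paper avoids the bootstrap entirely. It reduces to $n=\ell$ (by taking $\mu_{\ell+1}=\cdots=\mu_n=\delta_0$), applies Theorem~\ref{basicradialndim} to obtain the integral representation of $G$, and then shows via a finite-difference identity (not requiring differentiability of $\psi$) that $\partial^{\vec 1}G$ depends only on $\sum t_i$, so by Lemma~\ref{sumcm} the representing measure $\eta$ is supported on the diagonal $\{r\vec 1\}\subset[0,\infty)^\ell$. From this diagonal measure one writes down an explicit CM$_\ell$ function $\varphi$, and the difference $\psi-(-1)^\ell\varphi$ is shown to satisfy the Fr\'echet functional equation $\sum_{F}(-1)^{\ell-|F|}z(\overline{t_F})=0$ on $[0,\infty)$, forcing it to be a polynomial of degree $\leq\ell-1$. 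This is more elementary (no distribution theory) and yields $(iii)$ directly, which is slightly stronger than your conclusion $(iv)$.
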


 \begin{proof} Relations $(ii)$, $(iii)$  and $(iv)$ are equivalent by Theorem \ref{compleelltimes}  and the reparametrizatization does not affect the class of functions obtained as 
 	$$
 	\frac{1+r^{\ell}}{r^{\ell}} \leq \frac{(1+r)^{\ell}}{r^{\ell}} \leq \ell!\frac{1+r^{\ell}}{r^{\ell}}, \quad r \in (0, \infty).
 	$$
 	If relation $(ii)$ holds, pick  arbitrary  $d, m\in \mathbb{N}$,  points  $x_{i}^{1}, \ldots, x_{i}^{n} \in \mathbb{R}^{d}$, $1 \leq i \leq m$ 	and scalars $c_{\alpha}$, $\alpha \in \mathbb{N}_{m}^{n}$, that satisfies that the measure $\sum_{\alpha \in  \mathbb{N}_{m}^{n}}c_{\alpha}\delta_{x_{\alpha}} \in \mathcal{M}_{k}((\mathbb{R}^{d})_{n})$. Define the $m^{n}$ vectors $u^{\alpha}:=(x_{\alpha_{1}}, \ldots, x_{\alpha_{n}}) \in \mathbb{R}^{dn}$, and note that for every polynomial  $p: \mathbb{R}^{dn}  \to \mathbb{R}$   of degree less than or equal to  $\ell-1$, we have that
 	$$
 	\sum_{\alpha \in \mathbb{N}_{m}^{\ell}}c_{\alpha}p(u_{\alpha})=0
 	$$
 	because $p$ does not depend on the $\ell$ indexes among the possible $n$ of a vector $(z_{1}, z_{2}, \ldots , z_{n}) \in (\mathbb{R}^{d})^{n}$, see Equation \ref{integmu0n}, and consequently relation $(i)$ holds because
 	\begin{align*}
 		\sum_{\alpha, \beta \in \mathbb{N}_{m}^{\ell}}&c_{\alpha}c_{\beta}(-1)^{\ell}\left[ (-1)^{\ell}\psi(\|x_{1}^{\alpha_{1}}- x_{1}^{\beta_{1}}\|^{2} + \ldots + \|x_{n}^{\alpha_{n}}- x_{n}^{\beta_{n}}\|^{2})\right ]\\
 		&=\sum_{\alpha, \beta \in \mathbb{N}_{m}^{\ell}}c_{\alpha}c_{\beta}\psi(\|u^{\alpha}-u^{\beta}\|^{2})\geq 0.
 	\end{align*}
 	Now, we prove that  relation $(i)$ implies relation $(ii)$ on the case $n=\ell$ and from this we conclude the general case for an arbitrary $n\geq \ell$.\\
 	By specializing Lemma \ref{PDInsimpli} to the function $g(t):=(-1)^{\ell}\psi(t_{1} + \ldots +t_{\ell})$ we get that
 	$$
 	G(t):=\sum_{F\subset \{1, \ldots, \ell \}}(-1)^{|F|}\psi(\overline{t_{F}})=\int_{[0,\infty)^{n}}\prod_{i=1}^{n}(1-e^{-r_{i}t_{i}})\frac{1 +r_{i}}{r_{i}}d\eta(r) 
 	$$ 
 	is a Bernstein function of $\ell$ variables in $(0, \infty)^{\ell}$ that is zero at $\partial_{\ell-1}^{\ell}$. Hence, $\partial^{\vec{1}}G$ is completely monotone, but   $\partial^{\vec{1}}G$ only depends on the value $\sum_{i=1}^{\ell}t_{i}$. Indeed, for every fixed subset $F$ and $t_{\vec{1}}, t_{\vec{2}} \in [0, \infty)^{n}$  we have that
 	$$
 	\sum_{\alpha \in \mathbb{N}^{\ell}_{2} }(-1)^{|\alpha|} \psi(\overline{(t_{\alpha})_{F}}) = \int_{[0, \infty)^{\ell}}\psi\left (\overline{u_{F}}\right )d[\bigtimes_{i=1}^{\ell}(\delta_{t_{i}^{1}} - \delta_{t_{i}^{2}}) ](u_{1}, \ldots, u_{\ell}),
 	$$
 	and whenever  $|F| < \ell$	this sum is zero by Equation \ref{integmu0n}. Hence,  if  $h \in (0, \infty)^{\ell}$ define $t_{\vec{2}}:= t_{\vec{1}} + h$ 
 	\begin{align*}
 		\sum_{\alpha \in \mathbb{N}^{\ell}_{2} }(-1)^{|\alpha|} G(t_{\alpha})&=
 		\sum_{\alpha \in \mathbb{N}^{\ell}_{2} }(-1)^{|\alpha|} \left [\sum_{F\subset \{1, \ldots, \ell \}}(-1)^{|F|}\psi(\overline{(t_{\alpha})_{F}}) \right ]\\
 		&=\sum_{F\subset \{1, \ldots, \ell \}}(-1)^{|F|} \left [\sum_{\alpha \in \mathbb{N}^{\ell}_{2} }(-1)^{|\alpha|}\psi(\overline{(t_{\alpha})_{F}}) \right ]\\
 		&=(-1)^{\ell} \sum_{\alpha \in \mathbb{N}^{\ell}_{2} }(-1)^{|\alpha|} \psi(\overline{(t_{\alpha})_{\{1,\ldots ,\ell\}}})+0\\
 		& = \sum_{F\subset \{1, \ldots, \ell \}}(-1)^{|F|} \psi(\overline{t_{\vec{1}}} + \overline{h_{F}}),  	
 	\end{align*}
 	which concludes the proof because  
 	$$	
 	\partial^{\vec{1}}G(t_{\vec{1}})= \lim_{ t_{\vec{2}}  \to t_{\vec{1}}}\frac{\sum_{\alpha \in \mathbb{N}^{\ell}_{2} }(-1)^{|\alpha|} G(t_{\alpha})}{\prod_{i=1}^{\ell}(t_{i}^{2} - t_{i}^{1})}= \lim_{ h   \to \vec{0}}\frac{\sum_{ F\subset \{1, \ldots, \ell \}}(-1)^{|F|} \psi(\overline{t_{\vec{1}}} + \overline{h_{F}})}{\prod_{i=1}^{\ell}h_{i}}.	
 	$$	
 	By Lemma \ref{sumcm}  and proof of  Theorem \ref{bernssevndim}, the support of the measure  $\eta$ that represents $G $ is contained on the set $\{(r,r,\ldots, r) \in [0, \infty)^{\ell}, \quad r \in [0, \infty)\}$, hence we may assume that $\eta \in \mathfrak{M}([0, \infty))$. On the other hand, define the function 
 	$$
 	\varphi(t):=(-1)^{\ell}t^{\ell}\frac{\eta(\{0\})}{\ell!}  + \int_{(0,\infty)}(e^{-rt} - e_{\ell}(r)\omega_{\ell}(rt))\frac{(1+r)^{\ell}}{r^{\ell}} d\eta(r) 
 	$$
 	which is a well defined continuous function by Theorem \ref{compleelltimes}. The key step in the proof is to note that by Equation \ref{eq1frechet} and Equation \ref{eq2frechet}, we have that
 	\begin{align*}
 		(-1)^{\ell}&\sum_{F\subset \{1, \ldots, \ell \}}(-1)^{\ell-|F|}\varphi(\overline{t_{F}})\\
 		&= \prod_{i=1}^{\ell}t_{i}\eta(\{0\})  + \int_{(0, \infty)} \left [\prod_{i=1}^{\ell}(1-e^{-rt_{i}}) \right ]\frac{(1+r)^{\ell}}{r^{\ell}}d\eta(r) = G(t ) 
 	\end{align*}
 	But then, the function $z(t):=\psi(t) -(-1)^{\ell}\varphi(t)$ satisfies the Frechet functional equation on $[0, \infty)$, implying that $z$ is a polynomial of degree less  or equal to $\ell-1$, which concludes the proof that relation $(i)$ implies relation $(iii)$. \end{proof}

 \begin{remark}\label{difguo} A closer look at Theorem \ref{ktimes}, reveals that with it we can obtain that relation $iv)$ implies relation $iii)$ with different arguments then the one presented in \cite{Guo1993}.\\
 	Indeed, note that relation $iv)$ implies relation $i)$ by direct verification and we proved on the Theorem that   relation $i)$ implies relation $iii)$. 	  
 \end{remark}

 	\section*{Acknowledgments}
 
 This work was partially funded by São Paulo Research Foundation (FAPESP) under grants
 2021/04226-0 and 2022/00008-0

\bibliographystyle{siam}
\bibliography{References}

\begin{thebibliography}{10}

\bibitem{Albert2022}
{\sc M.~Albert, B.~Laurent, A.~Marrel, and A.~Meynaoui}, {\em {Adaptive test of
  independence based on HSIC measures}}, The Annals of Statistics, 50 (2022),
  pp.~858--879.

\bibitem{AlonsoMalaver2015}
{\sc C.~E. Alonso-Malaver, E.~Porcu, and R.~Giraldo}, {\em Multivariate and
  multiradial {S}choenberg measures with their dimension walks}, Journal of
  Multivariate Analysis, 133 (2015), pp.~251--265.

\bibitem{Bakirov2006}
{\sc N.~K. Bakirov, M.~L. Rizzo, and G.~J. Sz{\'e}kely}, {\em A multivariate
  nonparametric test of independence}, Journal of Multivariate Analysis, 97
  (2006), pp.~1742--1756.

\bibitem{Berg1984}
{\sc C.~Berg, J.~Christensen, and P.~Ressel}, {\em Harmonic analysis on
  semigroups: theory of positive definite and related functions}, vol.~100 of
  Graduate Texts in Mathematics, Springer, 1984.

\bibitem{Bochner2005}
{\sc S.~Bochner}, {\em Harmonic analysis and the theory of probability},
  Courier Corporation, 2005.

\bibitem{Boettcher2018}
{\sc B.~B{\"o}ttcher, M.~Keller-Ressel, and R.~Schilling}, {\em Detecting
  independence of random vectors: generalized distance covariance and gaussian
  covariance}, Modern Stochastics: Theory and Applications, 5 (2018),
  pp.~353--383.

\bibitem{Boettcher2019}
{\sc B.~{B{\"o}ttcher}, M.~{Keller-Ressel}, and R.~L. {Schilling}}, {\em
  Distance multivariance: New dependence measures for random vectors}, Annals
  of Statistics, 47 (2019), pp.~2757--2789.

\bibitem{Chakraborty2019}
{\sc S.~Chakraborty and X.~Zhang}, {\em Distance metrics for measuring joint
  dependence with application to causal inference}, Journal of the American
  Statistical Association, 114 (2019), pp.~1638--1650.

\bibitem{NIST:DLMF}
{\em {\it NIST Digital Library of Mathematical Functions}}.
\newblock F.~W.~J. Olver, A.~B. {Olde Daalhuis}, D.~W. Lozier, B.~I. Schneider,
  R.~F. Boisvert, C.~W. Clark, B.~R. Miller, B.~V. Saunders, H.~S. Cohl, and
  M.~A. McClain, eds.

\bibitem{Dueck2014}
{\sc J.~Dueck, D.~Edelmann, T.~Gneiting, and D.~Richards}, {\em {The affinely
  invariant distance correlation}}, Bernoulli, 20 (2014), pp.~2305--2330.

\bibitem{fernandez2003flexible}
{\sc R.~Fern{\'a}ndez-Casal, W.~Gonz{\'a}lez-Manteiga, and M.~Febrero-Bande},
  {\em Flexible spatio-temporal stationary variogram models}, Statistics and
  Computing, 13 (2003), pp.~127--136.

\bibitem{Feuerverger1993}
{\sc A.~Feuerverger}, {\em A consistent test for bivariate dependence}, 61,
  pp.~419--433.

\bibitem{Gretton2005}
{\sc A.~Gretton, O.~Bousquet, A.~Smola, and B.~Sch{\"o}lkopf}, {\em Measuring
  statistical dependence with hilbert-schmidt norms}, in International
  conference on algorithmic learning theory, Springer, 2005, pp.~63--77.

\bibitem{Gretton2008}
{\sc A.~Gretton, K.~Fukumizu, C.~H. Teo, L.~Song, B.~Sch{\"o}lkopf, and A.~J.
  Smola}, {\em A kernel statistical test of independence}, in Advances in
  neural information processing systems, 2008, pp.~585--592.

\bibitem{guella2023}
{\sc J.~C. Guella}, {\em Generalization of the hsic and distance covariance
  using pdi kernels}, Banach Journal of Mathematical Analysis, 17 (2023).

\bibitem{Guella2020}
{\sc J.~C. Guella and V.~A. Menegatto}, {\em Conditionally positive definite
  matrix valued kernels on {E}uclidean spaces}, Constructive Approximation, 52
  (2020), pp.~65--92.

\bibitem{Guo1993}
{\sc K.~Guo, S.~Hu, and X.~Sun}, {\em Conditionally positive definite functions
  and {L}aplace-{S}tieltjes integrals}, Journal of Approximation Theory, 74
  (1993), pp.~249--265.

\bibitem{ip2004structural}
{\sc E.~H. Ip, Y.~J. Wang, and Y.-n. Yeh}, {\em Structural decompositions of
  multivariate distributions with applications in moment and cumulant}, Journal
  of multivariate analysis, 89 (2004), pp.~119--134.

\bibitem{Janson2021}
{\sc S.~Janson}, {\em On distance covariance in metric and hilbert spaces},
  Latin American Journal of Probability and Mathematical Statistics, 18 (2021),
  pp.~1353--1393.

\bibitem{kuczma1964equation}
{\sc M.~Kuczma}, {\em Sur une {\'e}quation aux diff{\'e}rences finies et une
  caract{\'e}risation fonctionnelle des polyn{\^o}mes}, Fundamenta
  Mathematicae, 55 (1964), pp.~77--86.

\bibitem{kuczma1968functional}
{\sc M.~Kuczma}, {\em Functional equations in a single variable}, Pwn-Polish
  Scientific Pub, 1968.

\bibitem{lancaster1969chi}
{\sc H.~O. Lancaster}, {\em The chi-squared distribution}, (No Title),  (1969).

\bibitem{NEURIPS2023_74f11936}
{\sc Z.~Liu, R.~Peach, P.~A. Mediano, and M.~Barahona}, {\em Interaction
  measures, partition lattices and kernel tests for high-order interactions},
  in Advances in Neural Information Processing Systems, A.~Oh, T.~Naumann,
  A.~Globerson, K.~Saenko, M.~Hardt, and S.~Levine, eds., vol.~36, Curran
  Associates, Inc., 2023, pp.~36991--37012.

\bibitem{macdonald1998symmetric}
{\sc I.~G. Macdonald}, {\em Symmetric functions and Hall polynomials}, Oxford
  university press, 1998.

\bibitem{MartinezGomez2014}
{\sc E.~Mart{\'{\i}}nez-G{\'{o}}mez, M.~T. Richards, and D.~S.~P. Richards},
  {\em Distance correlation methods for discovering associations in large
  astrophysical databases}, The Astrophysical Journal, 781 (2014), p.~39.

\bibitem{Mirotin2013rohtua}
{\sc A.~R. Mirotin}, {\em Properties of bernstein functions of several complex
  variables}, Mathematical Notes, 93 (2013), pp.~257--265.

\bibitem{petersjonasjointindp}
{\sc N.~Pfister, P.~B{\"u}hlmann, B.~Sch{\"o}lkopf, and J.~Peters}, {\em
  Kernel-based tests for joint independence}, Journal of the Royal Statistical
  Society: Series B (Statistical Methodology), 80 (2018), pp.~5--31.

\bibitem{Poczos2012}
{\sc B.~Poczos, Z.~Ghahramani, and J.~Schneider}, {\em Copula-based kernel
  dependency measures}, in Proceedings of the 29th International Conference on
  Machine Learning (ICML-12), J.~Langford and J.~Pineau, eds., ICML '12, New
  York, NY, USA, 2012, Omnipress, pp.~775--782.

\bibitem{Schilling2012}
{\sc R.~L. Schilling, R.~Song, and Z.~Vondracek}, {\em Bernstein functions:
  theory and applications}, vol.~37, Walter de Gruyter, 2012.

\bibitem{schoenbradial}
{\sc I.~J. Schoenberg}, {\em Metric spaces and completely monotone functions},
  Annals of Mathematics, 39 (1938), pp.~811--841.

\bibitem{NIPS2013_076a0c97}
{\sc D.~Sejdinovic, A.~Gretton, and W.~Bergsma}, {\em A kernel test for
  three-variable interactions}, in Advances in Neural Information Processing
  Systems, C.~Burges, L.~Bottou, M.~Welling, Z.~Ghahramani, and K.~Weinberger,
  eds., vol.~26, Curran Associates, Inc., 2013, pp.~1--9.

\bibitem{sejdinovic2013equivalence}
{\sc D.~Sejdinovic, B.~Sriperumbudur, A.~Gretton, and K.~Fukumizu}, {\em
  Equivalence of distance-based and rkhs-based statistics in hypothesis
  testing}, The Annals of Statistics, 41 (2013), pp.~2263--2291.

\bibitem{streitberg1990lancaster}
{\sc B.~Streitberg}, {\em Lancaster interactions revisited}, The Annals of
  Statistics,  (1990), pp.~1878--1885.

\bibitem{Szekely2009}
{\sc G.~J. {Sz{\'e}kely} and M.~L. {Rizzo}}, {\em Brownian distance
  covariance}, The Annals of Applied Statistics, 3 (2009), pp.~1236--1265.

\bibitem{Szekely2013}
{\sc G.~J. Sz{\'e}kely and M.~L. Rizzo}, {\em The distance correlation t-test
  of independence in high dimension}, Journal of Multivariate Analysis, 117
  (2013), pp.~193--213.

\bibitem{Szekely2014}
\leavevmode\vrule height 2pt depth -1.6pt width 23pt, {\em {Partial distance
  correlation with methods for dissimilarities}}, The Annals of Statistics, 42
  (2014), pp.~2382--2412.

\bibitem{Szekely2007}
{\sc G.~J. Sz{\'e}kely, M.~L. Rizzo, and N.~K. Bakirov}, {\em {Measuring and
  testing dependence by correlation of distances}}, The Annals of Statistics,
  35 (2007), pp.~2769--2794.

\bibitem{Tjoestheim2022}
{\sc D.~Tj{\o}stheim, H.~Otneim, and B.~St{\o}ve}, {\em {Statistical
  Dependence: Beyond Pearson's rho }}, Statistical Science, 37 (2022),
  pp.~90--109.

\bibitem{Wendland2005}
{\sc H.~Wendland}, {\em Scattered data approximation}, vol.~17, Cambridge
  university press, 2005.

\bibitem{Yao2018}
{\sc S.~Yao, X.~Zhang, and X.~Shao}, {\em Testing mutual independence in high
  dimension via distance covariance}, Journal of the Royal Statistical Society:
  Series B (Statistical Methodology), 80, pp.~455--480.

\bibitem{Zhu2020}
{\sc C.~Zhu, X.~Zhang, S.~Yao, and X.~Shao}, {\em {Distance-based and
  RKHS-based dependence metrics in high dimension}}, The Annals of Statistics,
  48 (2020), pp.~3366--3394.

\end{thebibliography}

\end{document}